\documentclass{article}

\usepackage[style=trad-plain,backref=true,url=false,isbn=false,alldates=year,sortcites=true,maxnames=9]{biblatex}
\addbibresource{stickknots-special.bib}
\addbibresource{stickknots.bib}
\renewbibmacro{in:}{%
  \ifentrytype{article}{}{\printtext{\bibstring{in}\intitlepunct}}}
\usepackage{geometry}
\usepackage{amssymb}
\usepackage{latexsym, amsmath, amscd,amsthm}
\usepackage{newtxtext,newtxmath}
\usepackage{mathtools}
\usepackage{thmtools}
\usepackage{graphicx}
\usepackage[percent]{overpic}
\usepackage{units}
\usepackage{tikz}
\usepackage{diagbox}
\usepackage{xcolor}
\definecolor{linkblue}{HTML}{003d73}
\definecolor{linkgreen}{HTML}{006161}
\definecolor{linkred}{HTML}{a11950}
\usepackage{hyperref}
\hypersetup{ 
	pdftitle={Direct Sampling of Confined Polygons in Linear Time},
	pdfauthor={Clayton Shonkwiler and Kandin Theis},
	pdfsubject={knot theory},
	pdfkeywords={knots, random polygons, confined polymers},
	colorlinks=true,
	linkcolor=linkblue,
	citecolor=linkgreen,
	urlcolor=linkred
}
\PassOptionsToPackage{caption=false,labelformat=empty}{subfig}
\usepackage[lofdepth]{subfig}
\usepackage[export]{adjustbox}
\usepackage{algorithm}
\usepackage{algorithmicx}
\usepackage{algpseudocode}
\usepackage{booktabs}
\usepackage{authblk}

\usepackage{supertabular,multicol,ifthen,multirow}

\usepackage{array}
\newcolumntype{L}[1]{>{\raggedright\let\newline\\\arraybackslash\hspace{0pt}}m{#1}}
\newcolumntype{C}[1]{>{\centering\let\newline\\\arraybackslash\hspace{0pt}}m{#1}}
\newcolumntype{R}[1]{>{\raggedleft\let\newline\\\arraybackslash\hspace{0pt}}m{#1}}

\usepackage[nameinlink]{cleveref}

\makeatletter
\let\mcnewpage=\newpage
\newcommand{\TrickSupertabularIntoMulticols}{%
\renewcommand\newpage{%
    \if@firstcolumn%
        \hrule width\linewidth height0pt%
            \columnbreak%
        \else%
          \mcnewpage%
        \fi%
}%
}
\makeatother

\graphicspath{{./figs/}}



\newtheorem{theorem}{Theorem}
\newtheorem{lemma}[theorem]{Lemma}
\newtheorem{proposition}[theorem]{Proposition}
\newtheorem{corollary}[theorem]{Corollary}

\theoremstyle{definition}
\newtheorem{definition}[theorem]{Definition}
\newtheorem*{example}{Example}
\newtheorem{conjecture}[theorem]{Conjecture}

\newtheorem*{notation}{Notation}

\newcommand{\R}{\mathbb{R}}

\newcommand{\Pol}{\operatorname{Pol}}
\newcommand{\Polhat}{\widehat{\operatorname{Pol}}}
\newcommand{\SO}{\operatorname{SO}}
\newcommand{\vol}{\operatorname{vol}}
\newcommand{\one}{\mathbf{1}}

\newcommand{\seqnum}[1]{\href{http://oeis.org/#1}{#1}}
\newcommand{\ent}[2]{E_{#1,#2}}
\newcommand{\gent}[3]{E_{#1,#2}^{(#3)}}

\let\oldReturn\Return
\renewcommand{\Return}{\State\oldReturn}


\DefineBibliographyStrings{english}{%
  backrefpage = {$\uparrow$},
  backrefpages = {$\uparrow$},
  page = {p\adddot},
  pages = {pp\adddot},
}

\DeclareSourcemap{
  \maps{
    \map{
      \step[fieldset=pagetotal, null]
	  \step[fieldset=pubstate, null]
    }
  }
}

\DeclareFieldFormat{eprint:urn}{%
  \mkbibacro{URN}\addcolon\space
  \ifhyperref
    {\href{https://nbn-resolving.org/urn:#1}{\nolinkurl{#1}}}
    {\nolinkurl{#1}}}
\DeclareFieldAlias{eprint:URN}{eprint:urn}
\DeclareFieldFormat{eprint:hal}{%
  \mkbibacro{HAL}\addcolon\space
  \ifhyperref
    {\href{https://hal.science/#1}{\nolinkurl{#1}}}
    {\nolinkurl{#1}}}
\DeclareFieldAlias{eprint:HAL}{eprint:hal}
\DeclareFieldFormat{eprint:numdam}{%
  Numdam\addcolon\space
  \ifhyperref
    {\href{http://www.numdam.org/item/#1}{\nolinkurl{#1}}}
    {\nolinkurl{#1}}}
\DeclareFieldAlias{eprint:Numdam}{eprint:numdam}
\DeclareFieldFormat{eprint:ark}{%
  \mkbibacro{ARK}\addcolon\space
  \ifhyperref
    {\href{https://n2t.net/ark:#1}{\nolinkurl{#1}}}
    {\nolinkurl{#1}}}
\DeclareFieldAlias{eprint:ARK}{eprint:ark}
\DeclareFieldFormat{eprint:zbl}{%
  Zbl\addcolon\space
  \ifhyperref
    {\href{https://zbmath.org/#1}{\nolinkurl{#1}}}
    {\nolinkurl{#1}}}
\DeclareFieldAlias{eprint:Zbl}{eprint:zbl}
\DeclareFieldFormat{eprint:mr}{%
  \mkbibacro{MR}\addcolon\space
  \ifhyperref
    {\href{https://mathscinet.ams.org/mathscinet-getitem?mr=#1}{\nolinkurl{#1}}}
    {\nolinkurl{#1}}}
\DeclareFieldAlias{eprint:MR}{eprint:mr}


\hyphenation{pa-ram-e-tri-za-tion}

\tikzset{my node/.style = {shape=circle, fill=black, inner sep = 1.5pt, outer sep = 0pt}}

\setlength{\parskip}{3pt}

\title{Direct Sampling of Confined Polygons in Linear Time}
\author{Clayton Shonkwiler}
\author{Kandin Theis}
\affil{Department of Mathematics, Colorado State University, Fort Collins, CO, USA}

\date{}

\setcounter{MaxMatrixCols}{20}

\begin{document}

\maketitle

\begin{abstract}
	We present an algorithm for sampling tightly confined random equilateral closed polygons in three-space  which has runtime linear in the number of edges. Using symplectic geometry, sampling such polygons reduces to sampling a moment polytope, and in our confinement model this polytope turns out to be natural from a combinatorial point of view. This connection to combinatorics yields both our fast sampling algorithm and explicit formulas for the expected distances of vertices to the origin. We use our algorithm to investigate the expected total curvature of confined polygons, leading to a very precise conjecture for the asymptotics of total curvature.
\end{abstract}

\section{Introduction} 
\label{sec:introduction}

In this paper we introduce the Confined Polygons from Order Polytopes (CPOP) algorithm for sampling random equilateral polygons in tight confinement. This is an interesting mathematical story, using symplectic geometry to reduce the problem to sampling a particular polytope, which turns out to be essentially the order polytope of the zig-zag poset, but it is also relevant to polymer science and biochemistry, as random polygons provide a simple model of ring polymers and the confinement model we use was introduced to try to understand DNA packing in, e.g., viral capsids.

For our purposes, a \emph{polygon} is a piecewise-linear embedding of the circle into $\R^3$, and an \emph{equilateral polygon} is a polygon in which all the segments have unit length. We can think of a random polygon as a random walk which is conditioned to close up and form a loop. 

In the polymer literature, random walks in $\R^3$ are often called \emph{freely-jointed chains} and provide a simple, well-understood, yet somewhat robust model for polymers in solution~\cite{rayleighProblemRandomVibrations1919,floryStatisticalMechanicsChain1989,hughesRandomWalksRandom1995}. In turn, random polygons provide the analogous model for ring polymers~\cite{orlandiniStatisticalTopologyClosed2007}, including many biopolymers like most bacterial DNA. 

However, biopolymers are often confined into small volumes and it remains an active area of research to understand how, for example, viral DNA gets packed into viral capsids~\cite{landerDNABendinginducedPhase2013,marenduzzoDNADNAInteractions2009,michelettiSimulationsKnottingConfined2008,arsuagaDNAKnotsReveal2005,arsuagaKnottingProbabilityDNA2002,jardineDNAPackagingDoublestranded2005,reithEffectiveStiffeningDNA2012}. This motivates the question of how to sample random equilateral polygons in tight confinement.

The closure condition imposes correlations between the edge directions which makes sampling polygons rather challenging: algorithms for sampling (unconfined) random equilateral polygons have been proposed for at least four decades~\cite{alvaradoGenerationRandomEquilateral2011,kleninEffectExcludedVolume1988,plunkettTotalCurvatureTotal2007,varelaFastErgodicAlgorithm2009,millettKnottingRegularPolygons1994,mooreTopologicallyDrivenSwelling2004,mooreLimitsAnalogySelfavoidance2005,vologodskiiStatisticalMechanicsSupercoils1979,cantarellaSymplecticGeometryClosed2016,cantarellaFastDirectSampling2016,cantarellaFasterDirectSampling2024,cantarellaCoBarSFastReweighted2024,diaoGeneratingEquilateralRandom2011,diaoGeneratingEquilateralRandom2012a,diaoGeneratingEquilateralRandom2012}. Of particular note are the Progressive Action-Angle Method (PAAM)~\cite{cantarellaFasterDirectSampling2024}, which produces random equilateral $n$-gons in $\Theta(n^2)$ time, and the Conformal Barycenter Sampler (CoBarS)~\cite{cantarellaCoBarSFastReweighted2024}, which produces reweighted samples in $\Theta(n)$ time.

Random confined polygons must satisfy not only the closure constraint which says that the path must eventually get back to its starting point, but also the confinement constraints, which have a variable effect depending on locations of vertices. The simplest confinement constraint is \emph{rooted spherical confinement of radius} $R$, in which the first vertex is placed at the center of a sphere of radius $R$, and all subsequent vertices are required to lie within this sphere. For equilateral polygons with unit-length edges, $R \geq 1$ since the second and last vertices must lie at distance 1 from the first vertex. 

In this regime there are two notable algorithms in use: Diao, Ernst, Montemayor, and Ziegler's method~\cite{diaoGeneratingEquilateralRandom2012a} based on generating successive marginals of the joint distribution of vertex distances from the center of the confining sphere, and the Toric Symplectic Markov Chain Monte Carlo (TSMCMC) algorithm introduced in~\cite{cantarellaSymplecticGeometryClosed2016}. Both have disadvantages: Diao et al.'s algorithm is rather computationally expensive, so it is challenging to generate large ensembles of confined $n$-gons when $n$ is large; on the other hand, TSMCMC can generate large ensembles, but it is a Markov chain with essentially useless bounds on its rate of convergence, so the effective size of an ensemble is hard to estimate.

CPOP combines the best of both worlds in the case $R=1$: it directly samples uniformly distributed and independent confined $n$-gons like Diao et al., so there is no concern about convergence rates, and it is even faster than TSMCMC. Indeed, as we will see in \Cref{thm:sampling}, CPOP generates random equilateral $n$-gons in rooted spherical confinement of radius 1 in time $\Theta(n)$, which is theoretically optimal, and in practice this is quite fast: for example, we can generate confined 20,000-gons at a rate of about 500/second on a desktop computer (see \Cref{fig:sample timings}). The drawback relative to other methods is that (so far at least) CPOP only applies in the case $R=1$, which is the smallest possible radius for rooted spherical confinement. 

Here is an overview of the structure of the paper: in \Cref{sec:polygons and polytopes} we use the symplectic geometry of polygon space~\cite{kapovichSymplecticGeometryPolygons1996} to reduce the problem of sampling equilateral $n$-gons in rooted confinement of radius~1 to the problem of sampling a particular polytope $\mathcal{P}_n(1)$. Up to an affine transformation, this polytope is equivalent to the order polytope of the zig-zag poset~\cite{stanleyTwoPosetPolytopes1986}, which is triangulated by simplices indexed by alternating permutations. This combinatorial connection is developed in \Cref{sec:polytopes and permutations}. In \Cref{sec:sampling}, we exploit the combinatorics to define CPOP using an algorithm of Marchal~\cite{marchalGeneratingRandomAlternating2012} for sampling $\mathcal{P}_n(1)$, which was originally part of an algorithm for quickly sampling alternating permutations. In addition to sampling, this approach yields a precise characterization (\Cref{cor:asymptotic chordlength}) of the asymptotic distributions of vertex distances from the origin in the large-$n$ limit.

In \Cref{sec:combinatorics} we take a bit of a combinatorial digression, defining \emph{augmented zig-zag posets} and giving three formulas for the number of linear extensions of these posets, one in terms of Entringer numbers, one recursive, and the last in terms of \emph{generalized Entringer numbers}, which we also define. Along the way, we prove a new identity for the Entringer numbers (\Cref{cor:entringer sum}) and determine the expected value of $\tau(i)$, where $\tau$ is a random alternating permutation (see \Cref{cor:expected first entry,cor:expected ith entry,cor:expected ith entry pt 2}); these results may be of some independent interest. \Cref{sec:chord lengths} connects this back to polygons, giving formulas for the expected distance to the origin of the $i$th vertex of a random confined $n$-gon in terms of this combinatorial data. We use CPOP to generate large ensembles of confined $n$-gons in \Cref{sec:numerics}, with the goal of understanding the expected total curvature of these polygons. We find strong evidence that the expected total curvature is asymptotic to $\left(\frac{\pi}{2} + 0.57545\right)n - 0.46742$ (see \Cref{conj:turning angle}), which is compatible with but rather more precise than Diao, Ernst, Rawdon, and Ziegler's model for expected total curvature of confined polygons~\cite{diaoTotalCurvatureTotal2018}. Finally, \Cref{sec:conclusion} concludes with some open questions and possible avenues for future investigation.


\section{Polygons and Polytopes}
\label{sec:polygons and polytopes}

As mentioned in the introduction, a \emph{polygon} is a piecewise-linear mapping of the circle into $\R^3$, and an \emph{equilateral polygon} is such an embedding for which each linear segment has the same length. Up to scaling, we may as well assume that length is 1. We can represent a polygon by the locations of its vertices $v_1, \dots , v_n \in \R^3$, and the equilateral polygon condition is equivalent to requiring that $|v_{i+1} - v_i| = 1$ for $i=1,\dots , n-1$ and $|v_1 - v_n| = 1$.

We can instead think in terms of edge vectors $e_1, \dots , e_n$, where $e_i = v_{i+1} - v_i$ for $i=1, \dots , n-1$ and $e_n = v_1 - v_n$. The edges of an equilateral polygon will be unit vectors, so we can equivalently think of an equilateral polygon as a collection $(e_1, \dots , e_n) \in (S^2)^n$ of unit vectors. The fact that these edges fit together to form a closed loop means that they satisfy the vector equation $e_1 + \dots + e_n = 0$, so the collection of equilateral polygons
\[
	\Pol(n) := \{(e_1, \dots , e_n) \in (S^2)^n: e_1 + \dots + e_n = 0\}
\]
forms a codimension-3 subset of the product $(S^2)^n \subset (\R^3)^n$ of $n$-tuples of unit vectors. One can show that the subset $\Pol(n)^\times$ on which all edges are distinct forms a smooth $(2n-3)$-dimensional submanifold of $(S^2)^n$ and that the $(2n-3)$-dimensional Hausdorff measure of $\Pol(n) \backslash \Pol(n)^\times$ vanishes; in this sense, $\Pol(n)$ is almost everywhere a submanifold of $(S^2)^n$ which inherits the submanifold Riemannian metric and associated volume measure.

Thinking in terms of edge vectors gives a translation-invariant representation of polygons; if we also want a rotation-invariant representation, it is natural to take the quotient
	\[
		\Polhat(n) := \Pol(n)/\SO(3),
	\]
which we call \emph{equilateral polygon space}. In turn, this quotient has a Riemannian metric defined by the condition that $\Pol(n) \to \Polhat(n)$ is a Riemannian submersion, and hence a natural probability measure given by normalizing the Riemannian volume.

We now describe some coordinates on $\Polhat(n)$ with respect to which this probability measure has a particularly simple form. These coordinates are inspired by symplectic geometry~\cite{kapovichSymplecticGeometryPolygons1996}, but are entirely elementary.

\begin{figure}[t]
	\centering
		\begingroup
		\setlength{\unitlength}{2.75in}
	    \begin{picture}(1,0.75)
			\put(0,0){\includegraphics[width=\unitlength]{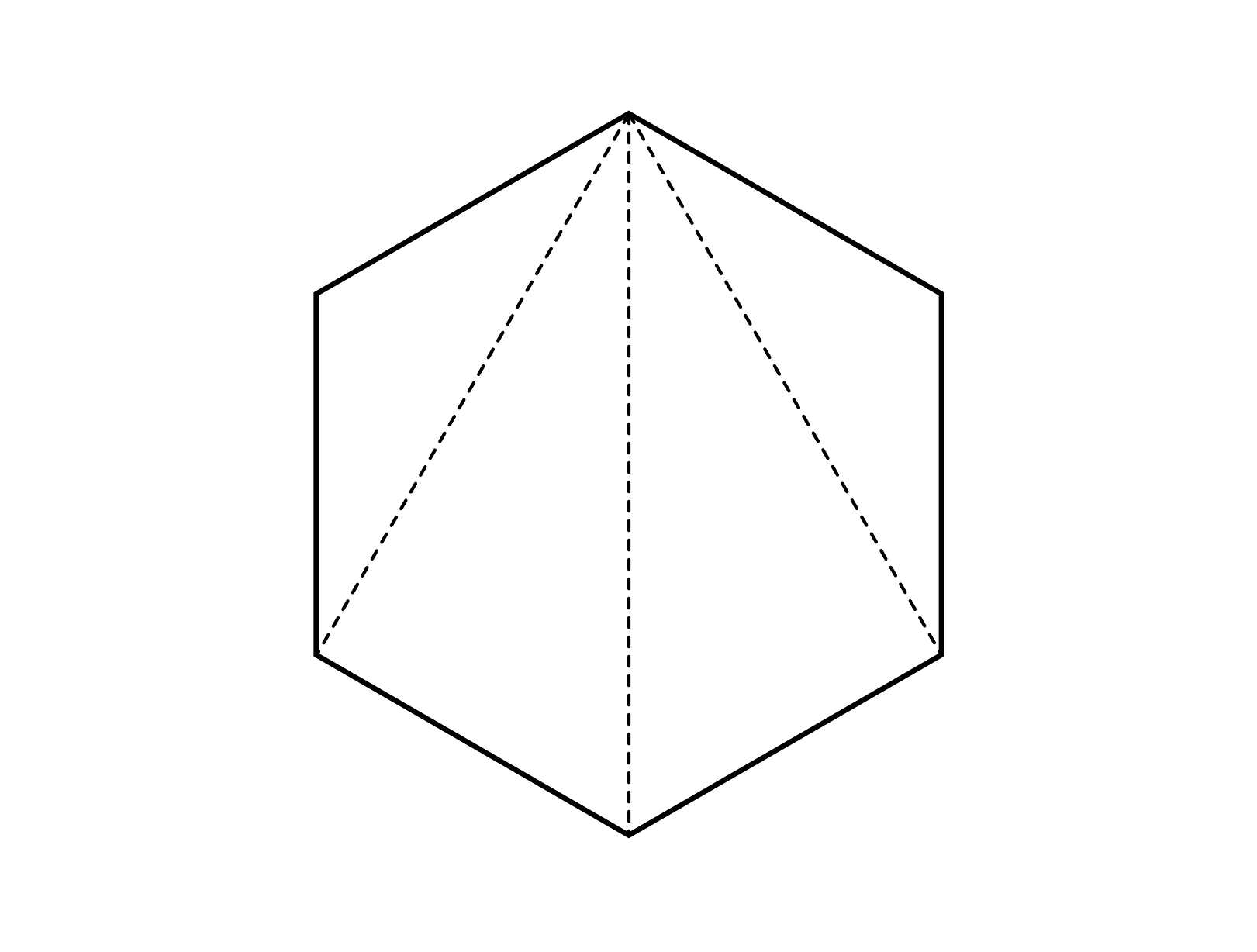}}
		  	\put(0.48,0.69){\smash{$v_1$}}
			\put(0.74,0.55){\smash{$v_2$}}
			\put(0.74,0.2){\smash{$v_3$}}
			\put(0.48,0.05){\smash{$v_4$}}
			\put(0.22,0.2){\smash{$v_5$}}
			\put(0.22,0.55){\smash{$v_6$}}
	    \end{picture}
	    \begin{picture}(1,0.75)
	      \put(0,0){\includegraphics[width=\unitlength]{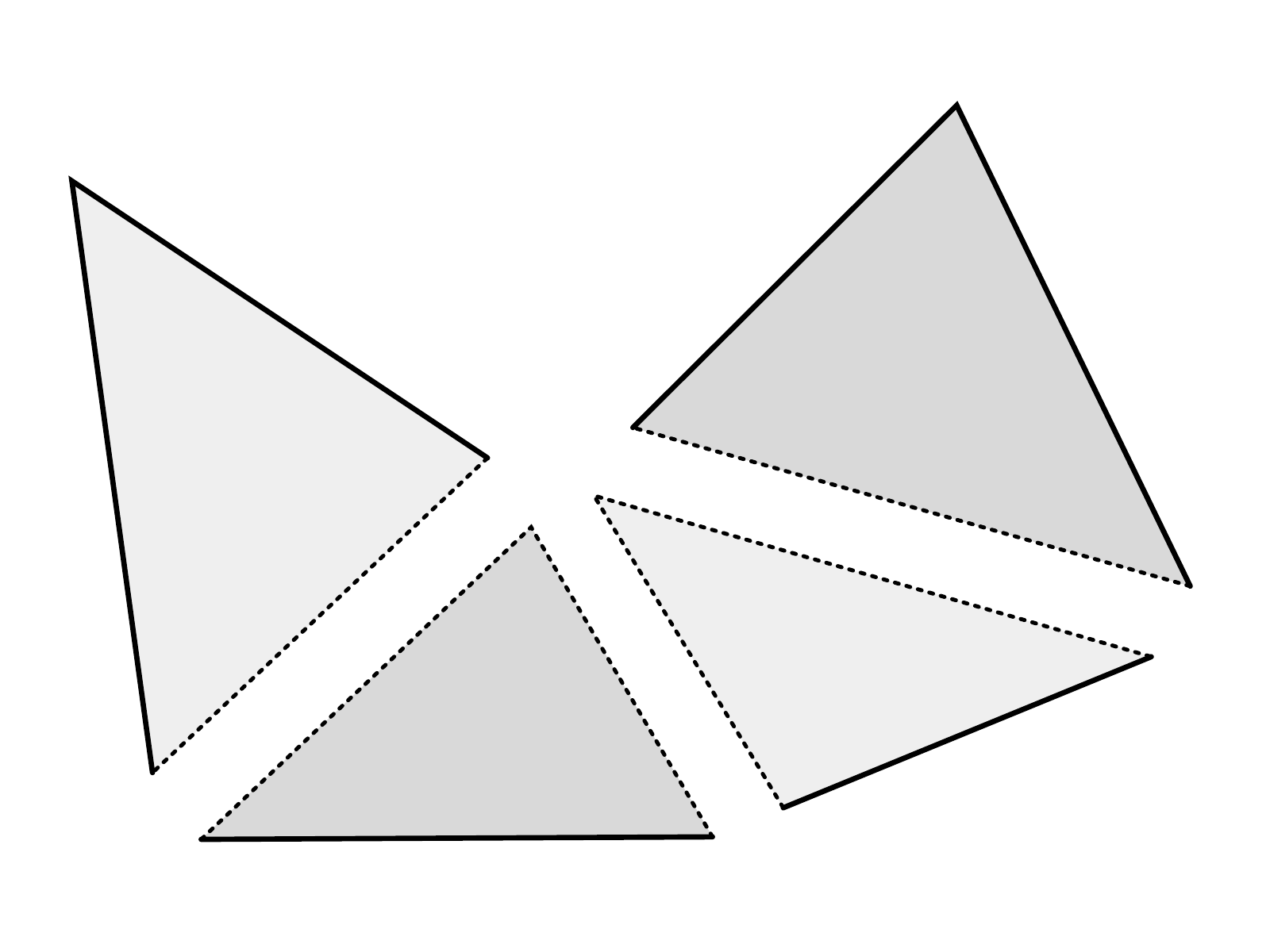}}
	      \put(0.41,0.38){\smash{$v_1$}}
	      \put(0.74,0.69){\smash{$v_2$}}
	      \put(0.92,0.245){\smash{$v_3$}}
	      \put(0.58,0.08){\smash{$v_4$}}
	      \put(0.1,0.1){\smash{$v_5$}}
	      \put(0.02,0.63){\smash{$v_6$}}
	      \put(0.67,0.31){\smash{$d_1$}}
	      \put(0.49,0.22){\smash{$d_2$}}
	      \put(0.24,0.225){\smash{$d_3$}}
	    \end{picture}
	    \begin{picture}(1,0.75)
	      \put(0,0){\includegraphics[width=\unitlength]{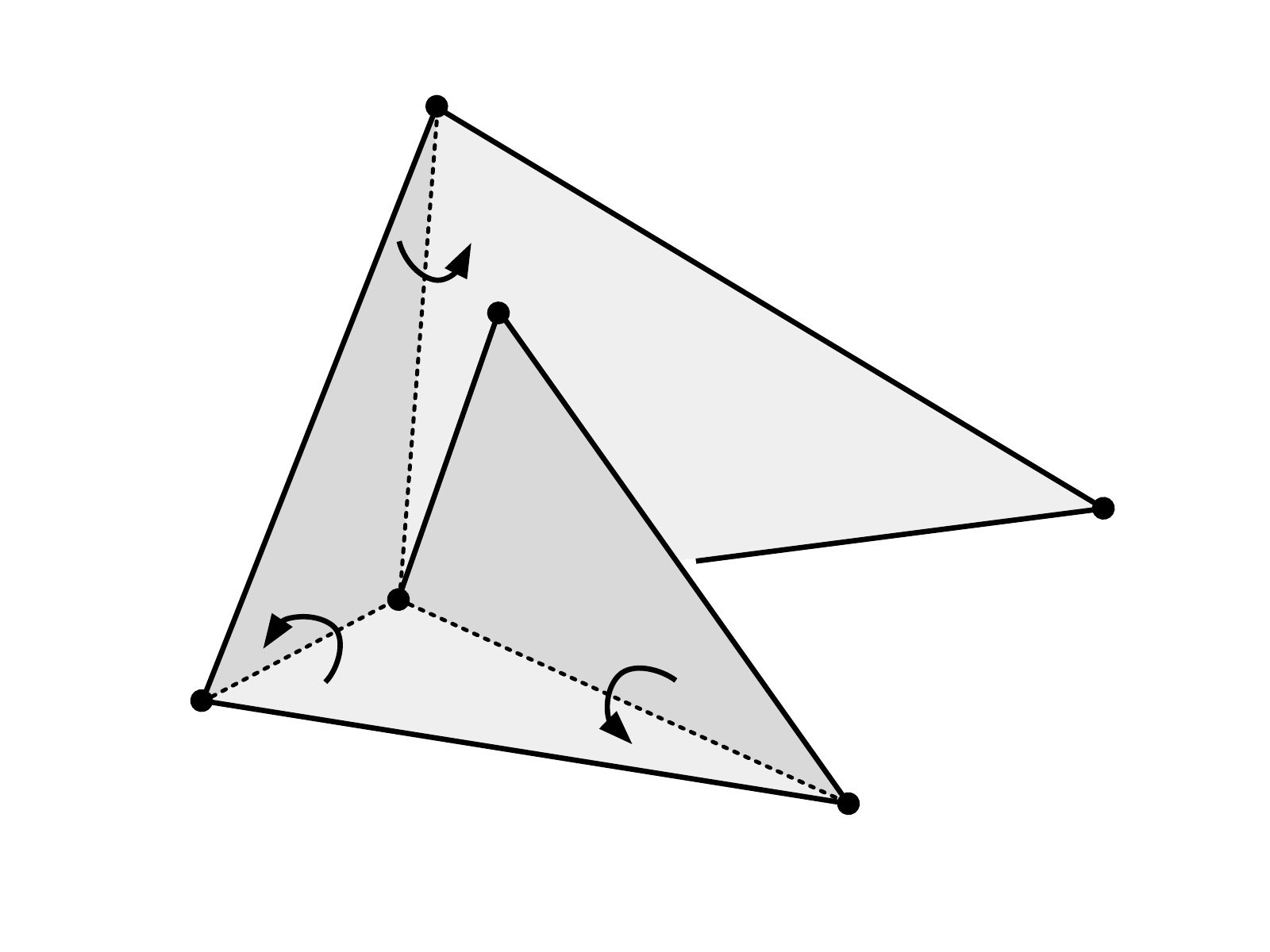}}
	      \put(0.3,0.24){\smash{$v_1$}}
	      \put(0.4,0.52){\smash{$v_2$}}
	      \put(0.67,0.08){\smash{$v_3$}}
	      \put(0.12,0.16){\smash{$v_4$}}
	      \put(0.33,0.685){\smash{$v_5$}}
	      \put(0.88,0.32){\smash{$v_6$}}
	      \put(0.45,0.24){\smash{$\theta_1$}}
	      \put(0.23,0.28){\smash{$\theta_2$}}
	      \put(0.308,0.495){\smash{$\theta_3$}}
	    \end{picture}
	    \begin{picture}(1,0.75)
	      \put(0,0){\includegraphics[width=\unitlength]{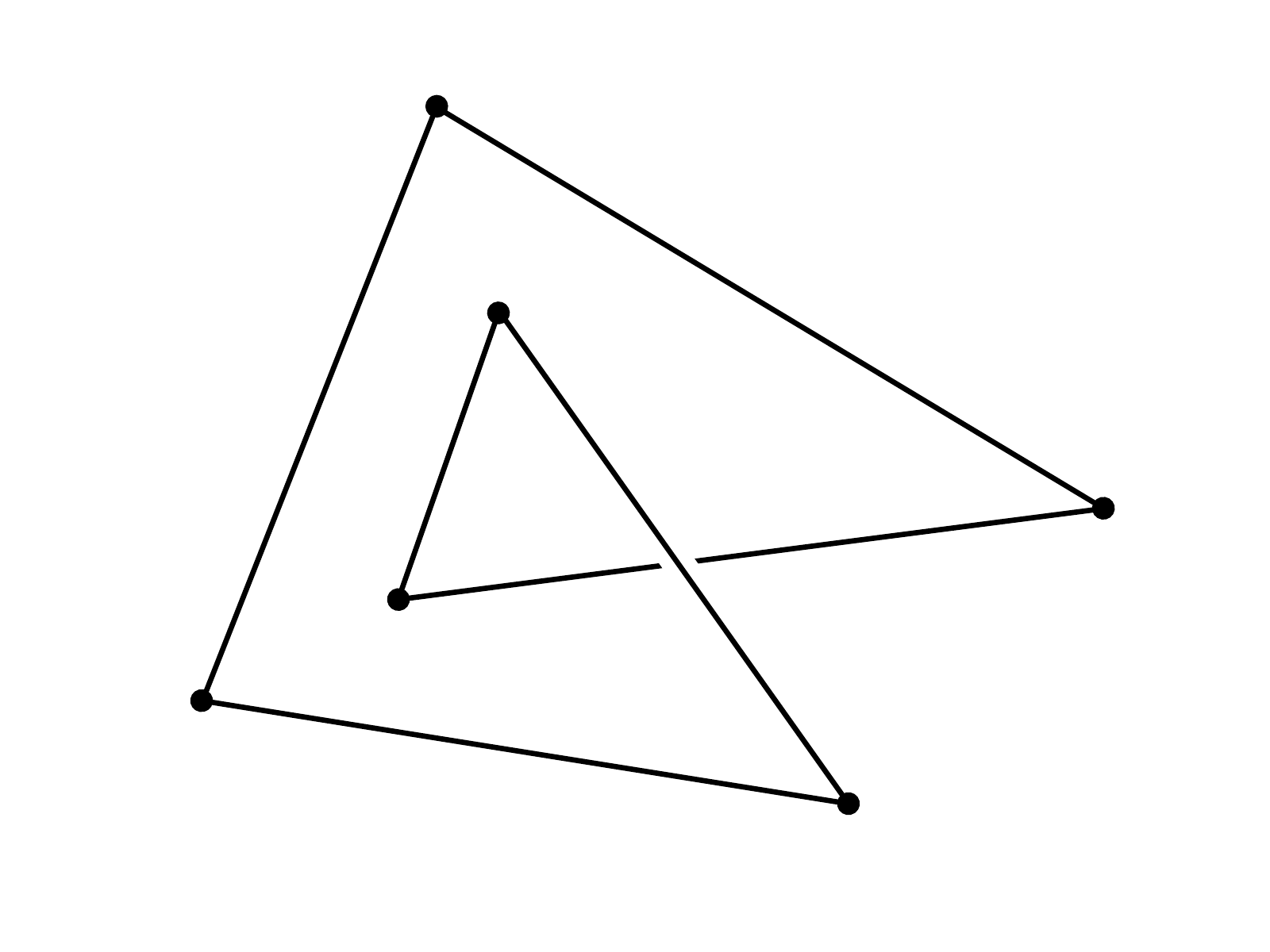}}
	      \put(0.28,0.24){\smash{$v_1$}}
	      \put(0.38,0.52){\smash{$v_2$}}
	      \put(0.67,0.08){\smash{$v_3$}}
	      \put(0.12,0.16){\smash{$v_4$}}
	      \put(0.33,0.685){\smash{$v_5$}}
	      \put(0.88,0.32){\smash{$v_6$}}
	    \end{picture}
		\endgroup
	\caption{Illustrating the reconstruction map $\alpha: \mathcal{P}_n \times T^{n-3}$ which takes diagonal lengths and dihedral angles to an equilateral polygon. Top left shows the triangulation of an abstract hexagon. Given $d_1, d_2, d_3$ which obey the triangle inequalities~\eqref{eq:fan polytope}, build the four triangles in the triangulation from their side lengths (top right). Given dihedral angles $\theta_1, \theta_2, \theta_3$, we can build a piecewise-linear surface out of these triangles (bottom left). The boundary of this surface is the resulting polygon in space (bottom right).}
	\label{fig:fan triangulation}
\end{figure}

Connecting $v_3, \dots , v_{n-1}$ to $v_1$---as in \Cref{fig:fan triangulation} (top left)---decomposes an abstract polygon into $n-2$ triangles. Those triangles are determined up to congruence by their side lengths, which are given by $|e_1| = \dots = |e_n| = 1$ and by the diagonal lengths $d_i := |v_{i+2} - v_1|$ for $i=1, \dots , n-3$. Hence, if we view an equilateral polygon as the boundary of the piecewise linear surface whose faces are these triangles---as in \Cref{fig:fan triangulation} (bottom left)---then the geometry of the surface (and hence also of its boundary) is completely determined by $d_1, \dots , d_{n-3}$ and the dihedral angles $\theta_1, \dots , \theta_{n-3}$ between adjacent triangles. Therefore, $d_1, \dots , d_{n-3}, \theta_1, \dots , \theta_{n-3}$ give a system of (almost-everywhere defined) coordinates on $\Polhat(n)$.

In symplectic terms, these coordinates are \emph{action-angle coordinates} on $\Polhat(n)$, as first explained by Kapovich and Millson~\cite{kapovichSymplecticGeometryPolygons1996}. While the dihedral angles can be chosen completely independently, the diagonal lengths cannot be independent, as they must satisfy the following system of triangle inequalities:
\begin{equation}
0 \leq d_1 \leq 2
\qquad
\begin{matrix}
1 \leq d_i + d_{i+1} \\
-1 \leq d_{i+1} - d_i \leq 1
\end{matrix}
\qquad
0 \leq d_{n-3} \leq 2.
\label{eq:fan polytope}
\end{equation}

Let $\mathcal{P}_n \subset \R^{n-3}$ be the convex polytope determined by the above inequalities. Letting $T^{n-3} = (S^1)^{n-3}$ be the product of unit circles, the action-angle coordinates $((d_1, \dots , d_{n-3}),(\theta_1, \dots , \theta_{n-3}))$ are defined on $\mathcal{P}_n \times T^{n-3}$, and the product of Lebesgue measure on $\mathcal{P}_n$ and the standard product measure on $T^{n-3}$ is measure-theoretically equivalent to the natural measure on $\Polhat(n)$:

 \begin{theorem}[{Cantarella--Shonkwiler~\cite{cantarellaSymplecticGeometryClosed2016}}]\label{thm:measure equivalence}
	The map $\alpha: \mathcal{P}_n \times T^{n-3} \to \Polhat(n)$ defining the action-angle coordinates (as illustrated in \Cref{fig:fan triangulation}) is measure-preserving.
\end{theorem}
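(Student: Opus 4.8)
The plan is to identify the natural Riemannian measure on $\Polhat(n)$ with the Liouville measure of its symplectic structure, and then to read off the latter in action--angle coordinates using the Kapovich--Millson description~\cite{kapovichSymplecticGeometryPolygons1996}. Since the excerpt already grants that $d_1,\dots,d_{n-3},\theta_1,\dots,\theta_{n-3}$ are action--angle coordinates, the real content is the comparison of measures.

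First I would set up the symplectic picture. Each factor of $(S^2)^n$ is the unit sphere viewed as the radius-$1$ coadjoint orbit $\mathbb{CP}^1$; on this sphere the standard area form $\omega_{S^2}$ is simultaneously the Kostant--Kirillov--Souriau form and the Riemannian area form (the two coincide pointwise precisely because the radius is $1$), so on the product the Liouville form $\omega^n/n! = \prod_i \pi_i^\ast \omega_{S^2}$ agrees exactly, with constant $1$, with the product Riemannian volume. The diagonal $\SO(3)$-action rotates all spheres simultaneously, and rotations are orientation-preserving isometries of $S^2$, hence holomorphic; thus the action is Hamiltonian and holomorphic with moment map $\mu(e_1,\dots,e_n) = e_1 + \cdots + e_n \in \R^3 \cong \mathfrak{so}(3)^\ast$. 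Therefore $\Pol(n) = \mu^{-1}(0)$ and $\Polhat(n) = \mu^{-1}(0)/\SO(3)$ is a Kähler (Marsden--Weinstein) reduction, carried out on the open dense locus where $0$ is a regular value and the action is free; its complement is contained in the degenerate set $\Pol(n)\setminus\Pol(n)^\times$, which has measure zero and so may be ignored throughout.

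The crux is to show that the submersion Riemannian volume on $\Polhat(n)$ equals the reduced Liouville volume. Here I would invoke Kähler reduction: the reduced form $\omega_0$, defined by $\pi^\ast\omega_0 = \iota^\ast\omega$ for $\pi \colon \mu^{-1}(0)\to\Polhat(n)$ and $\iota \colon \mu^{-1}(0)\hookrightarrow (S^2)^n$, is compatible with the complex structure inherited by identifying $T\Polhat(n)$ with the $g$-orthogonal complement of the orbit directions inside $\mu^{-1}(0)$, and the resulting Kähler metric is exactly the metric making $\pi$ a Riemannian submersion --- i.e. the natural metric on $\Polhat(n)$ from the excerpt. Since a Kähler metric always has Riemannian volume form equal pointwise to $\omega_0^{\,n-3}/(n-3)!$, the natural measure on $\Polhat(n)$ is its Liouville measure, with the constant $1$ inherited from the normalization on $(S^2)^n$. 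Finally, the Kapovich--Millson bending Hamiltonians realize the diagonal lengths as action variables with the dihedral angles as conjugate $2\pi$-periodic angles, so by Arnold--Liouville $\omega_0 = \sum_i dd_i\wedge d\theta_i$ on the nondegenerate locus and $\omega_0^{\,n-3}/(n-3)! = dd_1\cdots dd_{n-3}\,d\theta_1\cdots d\theta_{n-3}$. This is exactly the product of Lebesgue measure on $\mathcal{P}_n$ and the standard measure on $T^{n-3}$, so $\alpha$ pulls the natural measure back to the product measure and is measure-preserving.

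I expect the main obstacle to be the third step above: rigorously matching the two metrics and pinning the normalizing constant to $1$ rather than to an undetermined scalar. Concretely one must check that the submersion metric coincides with the Kähler-reduced metric (equivalently, that horizontal lifts along $\pi$ are the $J$-invariant directions in $\mu^{-1}(0)$), and track the constant through the ambient identification of area form with volume form on the unit $S^2$; a careful application of the coarea formula to $\mu^{-1}(0)\to\Polhat(n)$, controlling the $\SO(3)$-orbit volumes, is the cleanest way to confirm that the constant is $1$. A purely elementary alternative, avoiding symplectic geometry, is to compute the Jacobian of the reconstruction map $\alpha$ directly from the triangulated-surface description in \Cref{fig:fan triangulation} and verify that it is constant; this is more calculational but yields the same normalization.
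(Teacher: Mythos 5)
Your argument is correct and is essentially the proof given in the cited source: this paper states \Cref{thm:measure equivalence} without proof, importing it from Cantarella--Shonkwiler~\cite{cantarellaSymplecticGeometryClosed2016}, and that reference establishes it exactly as you do --- realizing $\Polhat(n)$ as the K\"ahler reduction of $(S^2)^n$ by the diagonal $\SO(3)$-action so that the submersion Riemannian volume is the reduced Liouville measure, and then applying the Kapovich--Millson action--angle structure to identify that Liouville measure with Lebesgue measure on $\mathcal{P}_n \times T^{n-3}$. The one point you flag as an obstacle (matching the submersion metric with the K\"ahler-reduced metric and pinning the constant) is handled there in just the way you describe, so there is no gap.
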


In consequence, to sample equilateral $n$-gons according to the natural measure on $\Polhat(n)$, it suffices to sample $(d_1, \dots , d_{n-3})$ from Lebesgue measure on $\mathcal{P}_n$ and $(\theta_1,\dots , \theta_{n-3})$ from the product measure on $T^{n-3}$. The latter is straightforward, so the only challenge is sampling from Lebesgue measure on $\mathcal{P}_n$. In \cite{cantarellaFasterDirectSampling2024,cantarellaFastDirectSampling2016} we showed how to do this efficiently, yielding an algorithm for sampling equilateral $n$-gons in expected time $\Theta(n^2)$.\footnote{In practice, the fastest algorithm for Monte Carlo integration on $\Polhat(n)$ is Cantarella and Schumacher's CoBarS algorithm~\cite{cantarellaCoBarSFastReweighted2024,cantarellaComputingConformalBarycenter2022}, which uses reweighted sampling.}

Of course, this is only an algorithm for sampling unconfined polygons. We now add a confinement restriction, and consider equilateral polygons in \emph{rooted spherical confinement}, meaning that all vertices lie within a sphere of radius $R$ centered at a chosen (root) vertex. Given our choice of coordinates, it is most natural to let $v_1$ be the root vertex, so that the confinement condition is simply $d_i \leq R$ for $i=1, \dots , n-3$. Adding these inequalities to the triangle inequalities~\eqref{eq:fan polytope} defines a new polytope $\mathcal{P}_n(R)$. Letting $\Polhat(n;R)$ be the space of equilateral $n$-gons in rooted spherical confinement of radius $R$, we have the following analog of \Cref{thm:measure equivalence}:

\begin{theorem}[{Cantarella--Shonkwiler~\cite{cantarellaSymplecticGeometryClosed2016}}]\label{thm:confined measure equivalence}
	The action-angle map $\alpha: \mathcal{P}_n(R) \times T^{n-3} \to \Polhat(n;R)$ is measure-preserving.
\end{theorem}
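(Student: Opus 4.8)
The plan is to derive this from the unconfined statement, \Cref{thm:measure equivalence}, by restricting the measure-preserving map $\alpha$ to the appropriate subsets. The essential observation is that rooted spherical confinement of radius $R$ corresponds exactly to the extra inequalities $d_i \le R$ that cut $\mathcal{P}_n(R)$ out of $\mathcal{P}_n$, so that $\alpha$ carries $\mathcal{P}_n(R) \times T^{n-3}$ precisely onto $\Polhat(n;R)$.

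First I would verify this correspondence carefully. A polygon with vertices $v_1, \dots, v_n$ lies in $\Polhat(n;R)$ precisely when $|v_j - v_1| \le R$ for all $j$. The root vertex $v_1$ is at distance $0$ from itself, while $v_2$ and $v_n$ are each at distance $1$ from $v_1$ since the edges are unit vectors; because $R \ge 1$, the constraints on these three vertices are automatic. For the remaining vertices $v_3, \dots, v_{n-1}$, the distance from $v_1$ is exactly the diagonal length $d_i = |v_{i+2} - v_1|$, so the confinement condition reduces to $d_i \le R$ for $i = 1, \dots, n-3$. Hence, under the coordinates of \Cref{thm:measure equivalence}, the confined polygon space $\Polhat(n;R)$ is precisely the image $\alpha(\mathcal{P}_n(R) \times T^{n-3})$, and $\mathcal{P}_n(R) \times T^{n-3}$ is exactly the preimage $\alpha^{-1}(\Polhat(n;R))$.

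With this identification in hand, the measure statement follows from a general principle: the restriction of a measure-preserving bijection to a measurable subset and its image is again measure-preserving. Concretely, the natural measure on $\Polhat(n;R)$ is by definition the restriction of the natural measure on $\Polhat(n)$, and the measure on the domain is the restriction of the product of Lebesgue measure on $\mathcal{P}_n$ with the product measure on $T^{n-3}$. Since $\alpha$ already matches these two measures on the full domain and carries the measurable set $\mathcal{P}_n(R) \times T^{n-3}$ bijectively onto $\Polhat(n;R)$, the matching persists verbatim on the restricted sets.

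I expect the only genuine content to lie in the geometric identification of the first step---confirming that confinement is captured \emph{exactly} by the diagonal inequalities, with no hidden constraints coming from the vertices $v_2$ and $v_n$ that are not recorded by any $d_i$. Once that is settled, the measure-theoretic conclusion is immediate from \Cref{thm:measure equivalence} and requires no new computation.
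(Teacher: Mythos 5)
Your proposal is correct and follows exactly the route the paper intends: the paper states \Cref{thm:confined measure equivalence} as an immediate analog of \Cref{thm:measure equivalence}, obtained by observing that rooted confinement of radius $R$ is equivalent to the inequalities $d_i \le R$ (the vertices $v_1$, $v_2$, $v_n$ being automatically confined since $R \ge 1$) and restricting the measure-preserving map $\alpha$ accordingly. No further comparison is needed.
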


We showed in~\cite{cantarellaSymplecticGeometryClosed2016} how to use the hit-and-run Markov chain~\cite{bonehConstraintsRedundancyFeasible1979,smithEfficientMonteCarlo1984,andersenHitRunUnifying2007} on $\mathcal{P}_n(R)$ to get an ergodic Markov chain on $\Polhat(n;R)$, and subsequently used this Markov chain to generate large ensembles of tightly-confined polygons which provided new bounds on stick numbers and superbridge indices of a large number of knots~\cite{eddyImprovedStickNumber2019,eddyNewStickNumber2022,blairKnotsExactly102020,shonkwilerNewComputationsSuperbridge2020,shonkwilerAllPrimeKnots2022,shonkwilerNewSuperbridgeIndex2022a}. 

In contrast, Diao, Ernst, Montemayor, and Ziegler~\cite{diaoGeneratingEquilateralRandom2011} developed a non-Markov chain method for sampling $\Polhat(n;R)$ by generating successive marginals of Lebesgue measure on $\mathcal{P}_n(R)$ and used it to perform various experiments on this model~\cite{diaoGeneratingEquilateralRandom2012a,diaoGeneratingEquilateralRandom2012,diaoRandomWalksPolygons2014,diaoKnotSpectrumConfined2014,diaoTotalCurvatureTotal2018,diaoRelativeFrequenciesAlternating2018,ernstKnottingSpectrumPolygonal2021,diaoAverageCrossingNumber2018}.

The above approaches are generic, in the sense that they work for any $R \geq 1$ (of course, the radius $R$ of the confining sphere cannot be less than 1 since $v_2$ is always at distance 1 from $v_1$), though as pointed out in the introduction, both have substantial drawbacks. In this paper, our goal is to show that we get a dramatically better sampling algorithm in the tightest possible confinement, namely when $R=1$.

\section{Polytopes and Alternating Permutations}
\label{sec:polytopes and permutations}

To do so, note that the confinement inequalities $d_i \leq 1$ for all $i=1, \dots , n-3$ make most of the inequalities in~\eqref{eq:fan polytope} redundant: we can simplify the defining inequalities of $\mathcal{P}_n(1)$ as follows:
\begin{equation}\label{eq:confined inequalities}
	0 \leq d_i \leq 1 \qquad 1 \leq d_i + d_{i+1}.
\end{equation}
In other words
\[
	\mathcal{P}_n(1) = \{(d_1, \dots , d_{n-3}) \in [0,1]^{n-3} : d_i + d_{i+1} \geq 1 \text{ for all } i = 1, \dots , n-4\}.
\]

Though presented in lightly-disguised form, this is a well-known polytope in combinatorics. Under the affine transformation $\varphi: (d_1, \dots , d_{n-3}) \mapsto (d_1, 1-d_2, d_3, 1-d_4, \dots)$, $\mathcal{P}_n(1)$ maps to the polytope
\[
	\mathcal{O}_{n-3} := \{(x_1, \dots , x_{n-3}) \in [0,1]^{n-3} : x_1 \geq x_2 \leq x_3 \geq x_4 \leq \dots \},
\]
which Stanley~\cite[Section~3.8]{stanleySurveyAlternatingPermutations2010} calls the \emph{zig-zag order polytope}, since it is the order polytope of the zig-zag (or fence) poset~\cite[Example~4.3]{stanleyTwoPosetPolytopes1986}.

A map $\sigma: \mathcal{O}_{n-3} \to \{1,\dots, n-3\}$ is order-preserving if and only if the permutation $i \mapsto \sigma(x_i)$ is \emph{alternating}, which we define as follows:

\begin{definition}\label{def:alternating permutation}
	A permutation $\tau \in S_m$ is \emph{alternating} (or \emph{down-up}) if $\tau(1) > \tau(2) < \tau(3) > \dots$, and \emph{reverse-alternating} (or \emph{up-down}) of $\tau(1) < \tau(2) > \tau(3) < \dots$. Let $AP_m \subset S_m$ be the set of alternating permutations, and let $RP_m \subset S_m$ be the set of reverse-alternating permutations.
\end{definition}

Any permutation $\tau \in S_{n-3}$ determines a simplex
\begin{equation}\label{eq:orthoscheme definition}
	\Delta_\tau = \{(x_1, \dots , x_{n-3}) \in [0,1]^{n-3} : x_{\tau^{-1}(1)} \leq x_{\tau^{-1}(2)} \leq \dots \leq x_{\tau^{-1}(n-3)}\}.
\end{equation}
Each such $\Delta_\tau$ is congruent to the standard orthoscheme $\{(x_1, \dots , x_{n-3}) \in [0,1]^{n-3} : x_1 \leq x_2 \leq \dots \leq x_{n-3}\}$, which has volume $\frac{1}{(n-3)!}$. Moreover, when $\tau$ is alternating we have that $\Delta_\tau \subset \mathcal{O}_{n-3}$. Since the $\Delta_\tau$ have disjoint interiors and union equal to $\mathcal{O}_{n-3}$, we see that $\vol(\mathcal{O}_{n-3}) = \frac{E_{n-3}}{(n-3)!}$, where $E_{n-3}$ is the number of alternating (or reverse-alternating) permutations in $S_{n-3}$ (see also~\cite[Exercise~3.66(c)]{stanleyEnumerativeCombinatorics2011}). In turn, since the affine transformation $\varphi:\mathcal{P}_n(1)\to \mathcal{O}_{n-3}$ has determinant $\pm 1$ and hence is volume-preserving, we see that $\vol(\mathcal{P}_n(1)) = \frac{E_{n-3}}{(n-3)!}$ as well.

The number $E_{n-3}$ is known as an \emph{Euler number} (or \emph{up-down number}, \emph{zig-zag number}, or, depending on the parity of $n$, \emph{secant} or \emph{tangent number}), and appears as \seqnum{A000111} in the On-Line Encyclopedia of Integer Sequences (OEIS)~\cite{oeis} (see also \seqnum{A000364} and \seqnum{A000182} for the even and odd terms, respectively). The exponential generating function for the Euler numbers has been known since the 1870s, when André~\cite{andreDeveloppements$operatornamesacuteecX$1879,andrePermutationsAlternees1881} proved that
\[
	\sum_{n=0}^\infty E_n \frac{x^n}{n!} = \sec x + \tan x.
\]

See Stanley's excellent survey~\cite{stanleySurveyAlternatingPermutations2010} for a wealth of information about alternating permutations and Euler numbers. For example, the Euler numbers satisfy the simple recurrence
\[
	2E_{n+1} = \sum_{k=0}^n \binom{n}{k}E_k E_{n-k}
\]
and have the convergent asymptotic series
\begin{equation}\label{eq:euler asymptotics}
	\frac{E_n}{n!} = 2\left(\frac{2}{\pi}\right)^{n+1} \sum_{k=0}^\infty (-1)^{k(n+1)} \frac{1}{(2k+1)^{n+1}}.
\end{equation}
In particular, this implies that $\frac{E_n}{n!}$ is asymptotically equivalent to $2\left(\frac{2}{\pi}\right)^{n+1}$ (cf.~\cite[Example~IV.35]{flajoletAnalyticCombinatorics2009}) and hence that
\[
	\vol(\mathcal{P}_n(1)) = \frac{E_{n-3}}{(n-3)!} \sim 2 \left(\frac{2}{\pi}\right)^{n-2}.
\]
In other words, $\mathcal{P}_n(1)$ is an exponentially small subset of the unit hypercube $[0,1]^{n-3}$, so the naïve sampling strategy of rejection sampling the hypercube is not computationally feasible for large $n$.

Alternatively, the transformation $\psi: (d_1, \dots , d_{n-3}) \mapsto (1-d_1, \dots , 1-d_{n-3})$ maps $\mathcal{P}_n(1)$ to
\[
	\mathcal{C}_{n-3} := \{(x_1, \dots , x_{n-3}) \in [0,1]^{n-3} : x_i + x_{i+1} \leq 1 \text{ for all } i=1, \dots , n-4\},
\]
the \emph{zig-zag chain polytope} (again, see \cite[Section~3.8]{stanleySurveyAlternatingPermutations2010} or~\cite[Example~4.3]{stanleyTwoPosetPolytopes1986}). The polytope $\mathcal{C}_{n-3}$ seems to have been first considered in questions posed by Stanley~\cite{stanleyElementaryProblemE27011978} and Doberkat~\cite{doberkatHypervolumeProblem84201984}.

By way of the maps $\varphi$ and $\psi$, the polytopes $\mathcal{P}_n(1)$, $\mathcal{O}_{n-3}$, and $\mathcal{C}_{n-3}$ all have the same volume and are combinatorially equivalent; this is a special case of a theorem of Stanley~\cite[Theorem~2.3]{stanleyTwoPosetPolytopes1986}. In particular, they all have the same number of vertices, namely $F_{n-1}$, the $(n-1)$st Fibonacci number~\cite[Exercise~1.35(e)]{stanleyEnumerativeCombinatorics2011}. More generally, the full $f$-vector of these polytopes is known~\cite[Corollary~3.5]{chebikin$f$vectorDescentPolytope2011}.

\section{Sampling}
\label{sec:sampling}

Since the $\Delta_\tau$ which decompose $\mathcal{O}_{n-3}$ are indexed by alternating permutations and since sampling from Lebesgue measure on simplices is straightforward, any fast algorithm for sampling random alternating permutations can be adapted to give an algorithm for sampling from Lebesgue measure on $\mathcal{P}_n(1)$. For example, there are fast Boltzmann samplers~\cite{bodiniBoltzmannSamplersFirstorder2012,duchonBoltzmannSamplersRandom2004} for generating uniformly random alternating permutations on $\approx n$ letters, and a quadratic time algorithm~\cite{marchalDensityMethodPermutations2018} for generating uniformly random permutations with prescribed descent set based on the density method~\cite{banderierRectangularYoungTableaux2018,marchalRectangularYoungTableaux2020}.

The fastest algorithm for directly sampling alternating permutations seems to be that of Bodini, Durand, and Marchal~\cite{bodiniOptimalGenerationStrictly2024}, which is a modification of an earlier algorithm of Marchal~\cite{marchalGeneratingRandomAlternating2012}. Both of these algorithms produce random alternating permutations in $O(n \log n)$ time, but in examining the details, it turns out that both are actually sampling points from $\mathcal{P}_n(1)$ in $O(n)$ time, applying $\varphi$ to get points in $\mathcal{O}_{n-3}$, and then sorting the coordinates to get alternating permutations. Indeed, Marchal's older algorithm~\cite{marchalGeneratingRandomAlternating2012} is most appropriate for our purposes, since it generates samples from Lebesgue measure on $\mathcal{P}_n(1)$.

The basic observation underlying Marchal's approach is that a sequence of random draws $u_1, u_2, \dots$ from Lebesgue measure on $[0,1]$ can be interpreted as a Markov chain, where the transition probability is just uniform on $[0,1]$ (independent of the previous state). The initial segment $(u_1, \dots , u_{n-3})$ is likely to fail one of the defining inequalities $u_i + u_{i+1} \geq 1$, so it is unlikely to be in $\mathcal{P}_n(1)$; if we want to get points in $\mathcal{P}_n(1)$ we should forbid certain transitions. The simplest way to do this is, given the present state $x \in [0,1]$, to define a transition measure $P_x: \mathcal{B} \to [0,x]$ on the Borel sets of $[0,1]$ by
\[
	P_x(A) := \lambda(A \cap [1-x,1])
\]
where $\lambda$ is Lebesgue measure. This is not a probability measure unless $x=1$; to get a probability measure, we might hope to find a conditional density $f_x$ supported on $[1-x,1]$ so that
\[
	Q_x(A) := \int_{A} f_x(y)d\lambda(y)
\]
is a probability measure, and hence a Markov kernel, and we could generate a sequence $x_1, x_2, \dots$ by letting $x_1 = u_1 \sim U([0,1])$ and sampling $x_{i+1}$ from $Q_{x_i}$ for each $i=1,2,\dots$.

The resulting joint density of the initial sequence $(x_1, \dots , x_{n-3})$ is then
\begin{equation}\label{eq:joint density product}
	f_{x_{n-4}}(x_{n-3}) \dots f_{x_1}(x_2) \one_{[0,1]},
\end{equation}
which means that the obvious choice of just letting $f_x(y) = \one_{\{y \geq 1-x\}} \frac{1}{x}$ (that is, normalized Lebesgue measure on $[1-x,1]$) is not promising.

Instead, the expression~\eqref{eq:joint density product} for the joint density suggests that a conditional density $f_x$ of the form
\begin{equation}\label{eq:conditional density form}
	f_x(y) = \one_{\{y \geq 1-x\}} a \frac{h(y)}{h(x)}
\end{equation}
for some function $h$ might work well, since then the product in~\eqref{eq:joint density product} telescopes:
\begin{equation}\label{eq:telescope1}
	f_{x_{n-4}}(x_{n-3}) \dots f_{x_1}(x_2) \one_{[0,1]} = \one_{\{x_2 \geq 1-x_1, \dots, x_{n-3} \geq 1-x_{n-4}\}} a^{n-4} \frac{h(x_{n-3})}{h(x_1)},
\end{equation}
which only depends on the first and last states.

Suppose we have a conditional density of the form~\eqref{eq:conditional density form}. While this might seem like a strong constraint on $f_x$, densities of this form are in some sense entropy-maximizing~\cite{bassetVolumetryTimedLanguages2013,bassetMaximalEntropyStochastic2015}, so it is not unreasonable to assume they exist.

While the joint distribution of $(x_1, \dots , x_{n-3})$ is not yet uniform on $\mathcal{P}_n(1)$, we can get a new joint distribution which interchanges the roles of the first and last states by reversing the sequence: the density of the reversed initial sequence $(x_{n-3}, \dots , x_1)$ is
\[
	f_{x_{2}}(x_1) \dots f_{x_{n-3}}(x_{n-4}) \one_{[0,1]} = \one_{\{x_2 \geq 1-x_1, \dots, x_{n-3} \geq 1-x_{n-4}\}} a^{n-4} \frac{h(x_1)}{h(x_{n-3})}.
\]
Then we expect a mixture of the initial sequence and the reversed initial sequence to cancel the dependence on first and last states.

More precisely, let $\alpha := \frac{h(x_{n-3})}{h(x_1)}$ and define the tuple $(d_1, \dots , d_{n-3})$ as follows: 
\begin{itemize}
	\item with probability $\frac{1}{\alpha + \alpha^{-1}}$, let $(d_1, \dots , d_{n-3}) := (x_1, \dots , x_{n-3})$; 
	\item with probability $\frac{1}{\alpha + \alpha^{-1}}$, let $(d_1, \dots , d_{n-3}) := (x_{n-3}, \dots , x_1)$;
	\item and with the complementary probability $1-\frac{2}{\alpha + \alpha^{-1}}$, regenerate $(x_1, \dots , x_{n-3})$ and try again. 
\end{itemize}

Note that $\frac{1}{\alpha + \alpha^{-1}} < \frac{1}{2}$ for any $\alpha > 0$ and, as we will see in \Cref{thm:polytope sampling}, the rejection probability $1-\frac{2}{\alpha + \alpha^{-1}}$ is bounded away from 1, so this procedure is well-defined. When $(d_1, \dots, d_{n-3})$ is chosen in this way, its probability density function is a constant multiple of
\[
	\one_{\{d_2 \geq 1-d_1,  \dots, d_{n-3} \geq 1-d_{n-4}\}} a^{n-4} \left[ \frac{\alpha}{\alpha + \alpha^{-1}} + \frac{\alpha^{-1}}{\alpha + \alpha^{-1}} \right] = \one_{\{d_2 \geq 1-d_1,  \dots, d_{n-3} \geq 1-d_{n-4}\}} a^{n-4} ;
\]
that is, it is just (normalized) Lebesgue measure on $\{d_2 \geq 1-d_1,  \dots, d_{n-3} \geq 1-d_{n-4}\} = \mathcal{P}_n(1)$.

So if we could find a conditional density in the form~\eqref{eq:conditional density form}, this would give an efficient sampling algorithm provided the rejection probability is not too high. Of course, if $f_x$ is a density, then it must integrate to 1, so we have the constraint
\[
	1 = \int_0^1 f_x(y) dy = \int_0^1 \one_{\{y \geq 1-x\}} a \frac{h(y)}{h(x)} dy = \int_{1-x}^1 a \frac{h(y)}{h(x)} dy = \frac{a}{h(x)} \int_{1-x}^1 h(y) dy.
\]
In other words, the function $h$ should solve the following Volterra equation of the second kind:
\[
	h(x) = a \int_{1-x}^1 h(y) dy.
\]
This equation has a unique solution~\cite{brunnerCollocationMethodsVolterra2004} (up to the choice of scale factor $a$) and it is easy to verify that
\[
	h(x) = \sin \left( \frac{\pi}{2} x\right)
\]
solves the equation with $a=\frac{\pi}{2}$, so it must be the unique solution.\footnote{To derive this solution, rather than simply verifying that it is a solution, extend $h$ to a periodic function on $[-2,2]$ and expand it in terms of its Fourier series.} So the conditional probability density function should be
\begin{equation}\label{eq:conditional density}
	f_x(y) = \one_{\{y \geq 1-x\}}\frac{\pi}{2} \frac{\sin\left(\frac{\pi}{2}y \right)}{\sin\left(\frac{\pi}{2} x\right)}.
\end{equation}

Integrating yields the conditional cumulative distribution function
\[
	F_x(y) = \one_{\{y \geq 1-x\}}\left( 1- \frac{\cos\left(\frac{\pi}{2}y \right)}{\sin\left(\frac{\pi}{2} x\right)}\right).
\]
But now it is clear how to sample from this distribution: if $U \sim U[0,1]$, then $F_x^{-1}(U)$ will have cumulative distribution function $F_x$. After some simplification using the fact that sine and cosine differ by a phase shift, 
\[
	F_x^{-1}(U) = 1-\frac{2}{\pi}\arcsin\left((1-U) \sin \left(\frac{\pi}{2}x\right)\right).
\]
If $U\sim U[0,1]$, then so is $1-U$, and it follows that the random variable 
\[
	1-\frac{2}{\pi}\arcsin\left(U\sin \left(\frac{\pi}{2}x\right)\right)
\]
has cumulative distribution function $F_x$, as desired.

This then motivates Marchal's algorithm for sampling from $\mathcal{P}_n(1)$, which we state as \Cref{alg:polytope sampling}.

\begin{algorithm}[H]
\begin{algorithmic}
\Procedure{ConfinedPolytopeSample}{$n$}\Comment{Generate point in $\mathcal{P}_n(1)$}
\Repeat
\State $d_1 \gets $\Call{UniformRandom}{$[0,1]$}
	\For{$i=1$ to $n-4$}
		\State $u_{i+1} \gets $\Call{UniformRandom}{$[0,1]$}
		\State $d_{i+1} \gets 1 - \frac{2}{\pi} \arcsin\left(u_{i+1} \sin\left(\frac{\pi}{2}d_i\right)\right)$
	\EndFor
	\State $\alpha \gets \frac{\sin\left(\frac{\pi}{2} d_{n-3}\right)}{\sin\left(\frac{\pi}{2}d_1\right)}$
	\State $t \gets $\Call{UniformRandom}{$[0,1]$}
	\If{$t < \frac{1}{\alpha + \alpha^{-1}}$}
		\State $(d_1,\dots , d_{n-3}) \gets (d_{n-3},\dots , d_1)$
	\EndIf
\Until{$t < \frac{2}{\alpha + \alpha^{-1}}$}
\Return{$(d_1,\dots , d_{n-3})$}
\EndProcedure
\end{algorithmic}
\caption{$\mathcal{P}_n(1)$ Sampling}
\label{alg:polytope sampling}
\end{algorithm}

The preceding discussion, which is basically just a recapitulation of Marchal's paper, justifies the correctness of this algorithm:

\begin{theorem}[{Marchal~\cite{marchalGeneratingRandomAlternating2012}}]\label{thm:polytope sampling}
	\Cref{alg:polytope sampling} generates random points in $\mathcal{P}_n(1)$ according to Lebesgue measure. The rejection probability $1-\frac{2}{\alpha + \alpha^{-1}}$ is bounded above by $1-\frac{2}{3\pi} \approx 0.787$, so the average complexity is $\Theta(n)$.
\end{theorem}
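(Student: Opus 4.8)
The statement has two parts, and I would treat them separately: correctness (the returned tuple is distributed according to Lebesgue measure on $\mathcal{P}_n(1)$) and the rejection/complexity bound. For correctness the plan is to make the heuristic mixing argument above rigorous by tracking the joint density through a single pass of the \textbf{repeat} loop. First I would check that the inner \textbf{for} loop implements exactly the inverse-CDF sampler derived earlier: since $u_{i+1}\sin(\tfrac{\pi}{2}d_i)\in[0,1]$ the update forces $d_{i+1}\in[1-d_i,1]$ and has conditional density $f_{d_i}$ from~\eqref{eq:conditional density}, so by the telescoping identity~\eqref{eq:telescope1} (with $a=\tfrac{\pi}{2}$, $h(x)=\sin(\tfrac{\pi}{2}x)$) the forward sequence $\mathbf d=(d_1,\dots,d_{n-3})$ has joint density $g(\mathbf d)=\one_{\mathcal{P}_n(1)}(\mathbf d)\,a^{n-4}\,\tfrac{h(d_{n-3})}{h(d_1)}$. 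In particular the support is exactly $\mathcal{P}_n(1)$, and $\alpha=h(d_{n-3})/h(d_1)$ is precisely the density of $\mathbf d$ relative to the unnormalized uniform measure $a^{n-4}\one_{\mathcal{P}_n(1)}$.

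The structural fact I would then exploit is that $\mathcal{P}_n(1)$ is invariant under reversal $\rho\colon(d_1,\dots,d_{n-3})\mapsto(d_{n-3},\dots,d_1)$, because the inequalities~\eqref{eq:confined inequalities} are symmetric, and that $\alpha(\rho\mathbf d)=\alpha(\mathbf d)^{-1}$. Fixing $\mathbf d\in\mathcal{P}_n(1)$, there are exactly two ways a single loop iteration can emit $\mathbf d$: generate $\mathbf d$ and keep it, or generate $\rho\mathbf d$ and reverse it, each with selection probability $\tfrac{1}{\alpha+\alpha^{-1}}$ (using the symmetry of $\alpha+\alpha^{-1}$ under $\rho$). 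The two contributions $a^{n-4}\alpha\cdot\tfrac{1}{\alpha+\alpha^{-1}}$ and $a^{n-4}\alpha^{-1}\cdot\tfrac{1}{\alpha+\alpha^{-1}}$ sum to $a^{n-4}$, which is constant on $\mathcal{P}_n(1)$; after normalizing by the acceptance probability this is exactly Lebesgue measure, proving the first claim. I would also confirm that the acceptance logic in \Cref{alg:polytope sampling}, namely reversing when $t<\tfrac{1}{\alpha+\alpha^{-1}}$ and accepting when $t<\tfrac{2}{\alpha+\alpha^{-1}}$, realizes exactly these two emission cases (each of probability $\tfrac{1}{\alpha+\alpha^{-1}}$) together with the complementary rejection.

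For the complexity bound the clean observation is that the constant per-iteration emission density $a^{n-4}\one_{\mathcal{P}_n(1)}$ integrates to the per-iteration acceptance probability $p_n=a^{n-4}\vol(\mathcal{P}_n(1))=\big(\tfrac{\pi}{2}\big)^{n-4}\tfrac{E_{n-3}}{(n-3)!}$. Substituting the convergent series~\eqref{eq:euler asymptotics} collapses the competing powers of $\tfrac{\pi}{2}$ and leaves the exact identity $p_n=\tfrac{8}{\pi^2}\sum_{k=0}^{\infty}(-1)^{k(n-2)}(2k+1)^{-(n-2)}$. Bounding this tail below by an absolute constant (for $n\ge 5$ the alternating-series estimate gives a sum at least $\tfrac{2}{3}$, whence $p_n\ge\tfrac{16}{3\pi^2}>\tfrac{2}{3\pi}$) shows the rejection probability $1-p_n$ is at most $1-\tfrac{2}{3\pi}$ uniformly in $n$. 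Since each iteration performs $\Theta(n)$ arithmetic operations and the iteration count is geometric with success probability $p_n$ bounded away from $0$, the expected number of iterations is $O(1)$ and the expected runtime is $\Theta(n)$, the matching lower bound being forced by the size of the output.

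The main obstacle, and the place where the reasoning is genuinely not pointwise, is this rejection bound. One cannot bound $1-\tfrac{2}{\alpha+\alpha^{-1}}$ away from $1$ for a fixed generated sequence: letting $d_1\to 0$ with $d_{n-3}$ bounded away from $0$ sends $\alpha\to\infty$ and the per-sequence acceptance probability to $0$. The claim must therefore be read as a bound on the \emph{average} rejection probability over one iteration, and the essential trick is that this average is computed \emph{exactly} as the volume integral $a^{n-4}\vol(\mathcal{P}_n(1))$ rather than estimated sequence-by-sequence; the reversal symmetry is exactly what converts the awkward weighted integrand $\tfrac{2\alpha^2}{\alpha^2+1}$ into the constant $1$. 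The only external input needed is a uniform positive lower bound on the normalized volumes $\vol(\mathcal{P}_n(1))=E_{n-3}/(n-3)!$, for which the Euler-number asymptotics~\eqref{eq:euler asymptotics} are precisely suited.
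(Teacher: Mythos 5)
Your argument is correct, and it is worth separating its two halves when comparing with the paper. For the first claim (uniformity of the output on $\mathcal{P}_n(1)$) you follow essentially the same route the paper does: verifying that the inner loop is the inverse-CDF sampler for $f_{d_i}$, the telescoping joint density $a^{n-4}h(d_{n-3})/h(d_1)$ supported exactly on $\mathcal{P}_n(1)$, and the two-way mixture under reversal whose contributions sum to the constant $a^{n-4}$ --- this is precisely the discussion the paper gives immediately before the theorem. Where you genuinely diverge is the rejection bound: the paper explicitly declines to prove it (``see Marchal's paper for details''), whereas you supply a short self-contained proof. Your key identity --- that the per-iteration acceptance probability equals $a^{n-4}\vol(\mathcal{P}_n(1)) = \bigl(\tfrac{\pi}{2}\bigr)^{n-4}\tfrac{E_{n-3}}{(n-3)!}$ exactly, because the reversal symmetry converts the weighted integrand $\tfrac{2\alpha}{\alpha+\alpha^{-1}}$ into the constant $1$ --- combined with the series \eqref{eq:euler asymptotics} yields the closed form $\tfrac{8}{\pi^2}\sum_{k\geq 0}(-1)^{k(n-2)}(2k+1)^{-(n-2)}$ for every $n$, which delivers not only a uniform lower bound on acceptance but also, as a free byproduct, the asymptotic rejection probability $1-\tfrac{8}{\pi^2}$ that the paper only quotes from Marchal. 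You are also right to flag the reading of the statement: $\alpha$ is unbounded (send $d_1\to 0$ with $d_{n-3}$ bounded away from $0$), so $1-\tfrac{2}{\alpha+\alpha^{-1}}$ is \emph{not} bounded away from $1$ sample-by-sample, and the claim only makes sense as a bound on the probability of rejection in one pass of the loop --- a subtlety the paper's wording glosses over. The only discrepancy is cosmetic: your bound $1-\tfrac{16}{3\pi^2}\approx 0.46$ (and in fact the exact formula gives $1-\tfrac{\pi}{4}\approx 0.215$ at the worst case $n=5$) is considerably sharper than the stated $1-\tfrac{2}{3\pi}\approx 0.787$, which is harmless since any uniform bound below $1$ suffices for the $\Theta(n)$ complexity claim.
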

We do not prove the bound on the rejection probability here; see Marchal's paper for details. Marchal also showed that the rejection probability converges to $1-\frac{8}{\pi^2} \approx 0.1894$ as $n$ gets large, and in practice we see that this convergence is very rapid: see \Cref{fig:rejection probabilities} (left).

\begin{figure}[h]
	\centering
		\includegraphics[height=1.75in]{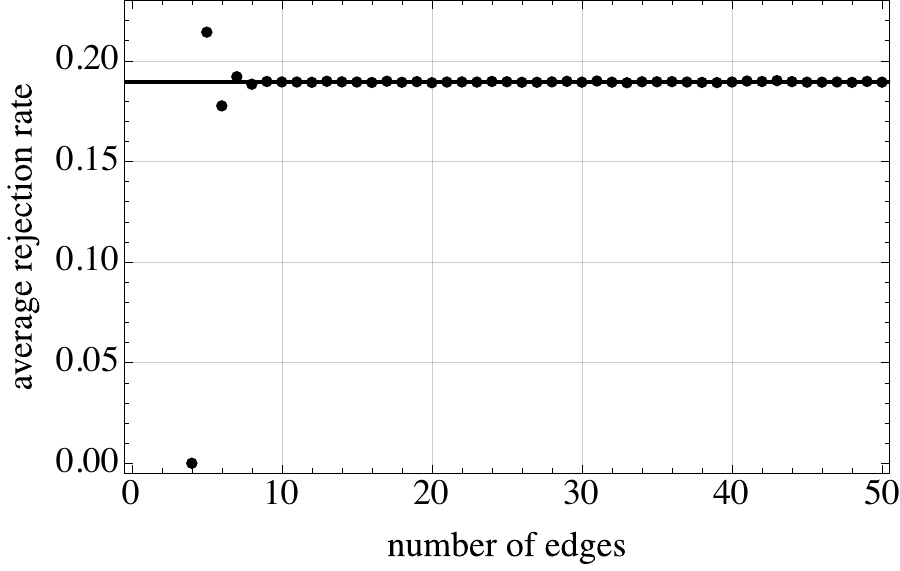}
		\qquad
		\includegraphics[height=1.75in]{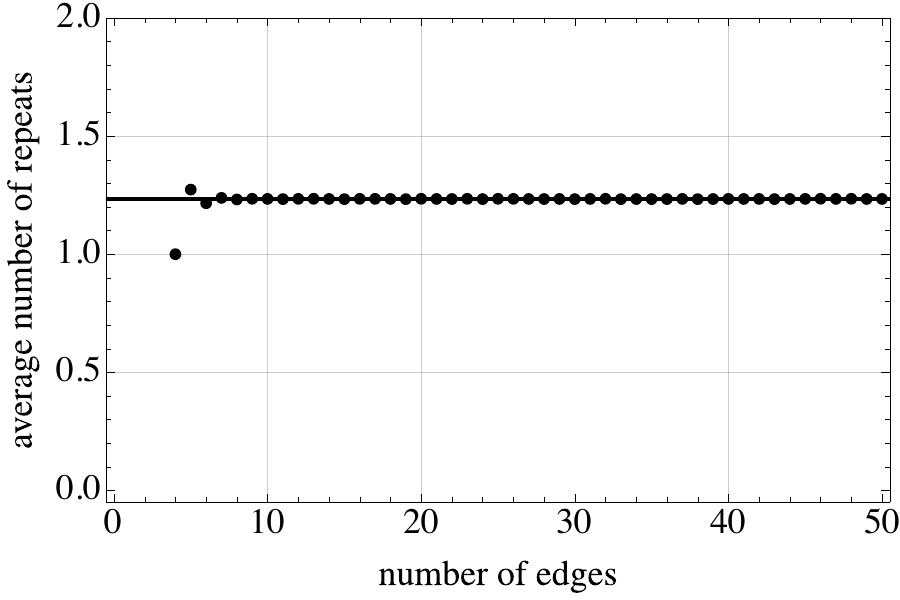}
	\caption{Left: Average rejection probabilities when generating 1,000,000 random points in $\mathcal{P}_n(1)$ with $n=4,\dots , 50$ (dots), compared to the asymptotic limit $1-\frac{8}{\pi^2} \approx 0.1894$ (black line). Right: Average number of repeats of the main loop in \Cref{alg:polytope sampling} when generating 1,000,000 random points in $\mathcal{P}_n(1)$ for the same range of $n$ (dots), compared to the estimate $\frac{\pi^2}{8} \approx 1.2337$ (black line).}
	\label{fig:rejection probabilities}
\end{figure}

Therefore, approximating the rejection probability by the asymptotic value $1-\frac{8}{\pi^2}$, the expected number of times we have to repeat the main loop in \Cref{alg:polytope sampling} is
\[
	\frac{1}{\pi^2/8} \sum_{i=1}^\infty i \left(1-\frac{8}{\pi^2}\right)^{i-1} = \frac{\pi^2}{8} \approx 1.2337,
\]
which matches what we see in practice very closely: see \Cref{fig:rejection probabilities} (right).

Combining \Cref{alg:polytope sampling} with \Cref{thm:confined measure equivalence} using the (linear-time) reconstruction procedure illustrated in \Cref{fig:fan triangulation} yields \Cref{alg:sampling} for sampling from $\Polhat(n;1)$.

\begin{algorithm}[H]
\begin{algorithmic}
\Procedure{ConfinedSample}{$n$}\Comment{Generate confined equilateral $n$-gon}
\State $(d_1,\dots , d_{n-3}) \gets $\Call{ConfinedPolytopeSample}{$n$}
	\For{$i=1$ to $n-3$}
		\State $\theta_i \gets $\Call{UniformRandom}{$[0,2\pi)$}
	\EndFor
	\State Reconstruct polygon $P$ from diagonals $d_1, \dots, d_{n-3}$ and dihedrals $\theta_1, \dots, \theta_{n-3}$.
	\Return{$P$}
\EndProcedure
\end{algorithmic}
\caption{Confined Polygons from Order Polytopes (CPOP) Sampling}
\label{alg:sampling}
\end{algorithm}

\begin{theorem}\label{thm:sampling}
	\Cref{alg:sampling} generates random polygons in $\Polhat(n;1)$ with average time complexity $\Theta(n)$.
\end{theorem}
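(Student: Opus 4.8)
The plan is to split the claim into a correctness statement—that the output is distributed according to the natural probability measure on $\Polhat(n;1)$—and a running-time statement, and to handle each by decomposing \Cref{alg:sampling} into its three stages: the call to \textsc{ConfinedPolytopeSample}, the generation of the dihedral angles, and the geometric reconstruction.

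For correctness, I would show that the sampled pair $((d_1,\dots,d_{n-3}),(\theta_1,\dots,\theta_{n-3}))$ is distributed according to the product of (normalized) Lebesgue measure on $\mathcal{P}_n(1)$ and the standard product measure on $T^{n-3}$. The first factor is exactly the content of \Cref{thm:polytope sampling}: \textsc{ConfinedPolytopeSample} returns a point of $\mathcal{P}_n(1)$ drawn from Lebesgue measure. The second factor is immediate, since the $\theta_i$ are drawn independently and uniformly from $[0,2\pi)$, which is precisely the standard measure on each circle factor; independence of the two blocks of draws gives the product measure on all of $\mathcal{P}_n(1)\times T^{n-3}$. Applying \Cref{thm:confined measure equivalence}, the reconstruction map $\alpha$ pushes this product measure forward to the natural probability measure on $\Polhat(n;1)$, so the returned polygon $P=\alpha((d_i),(\theta_i))$ has exactly the desired distribution.

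For the running time, I would bound each stage by $\Theta(n)$ and sum. \Cref{thm:polytope sampling} already gives that \textsc{ConfinedPolytopeSample} runs in expected time $\Theta(n)$: each pass through its main loop performs $\Theta(n)$ constant-time arithmetic and trigonometric operations, and the expected number of passes is bounded (converging to $\pi^2/8$). Drawing the $n-3$ angles is plainly $\Theta(n)$. The only stage not covered by a cited result is the reconstruction, and this is where the real work lies: I would show it is $\Theta(n)$ by exhibiting it as a sequence of $n-2$ constant-cost steps. Concretely, place $v_1$ at the origin and $v_2$ at a fixed unit vector; then, processing the fan triangulation of \Cref{fig:fan triangulation} one triangle at a time, each new vertex $v_{i+2}$ is determined from the already-placed data by the side lengths of the triangle $v_1 v_{i+1} v_{i+2}$ (a unit edge together with the consecutive diagonals $d_{i-1}, d_i$, whose shape is fixed via the law of cosines) and by the dihedral angle $\theta_i$ (via a rotation about the diagonal from $v_1$ to $v_{i+1}$). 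Each such placement uses only a fixed number of coordinates of previously computed points and a constant amount of trigonometry, hence costs $O(1)$; with $n-2$ triangles this gives a reconstruction in time $\Theta(n)$.

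Summing the three contributions yields expected running time $\Theta(n)$, completing the proof. The main obstacle is precisely the reconstruction bound: one must verify that placing each successive vertex genuinely requires only local, constant-size data—i.e.\ that no step needs to revisit or recompute a growing prefix of the polygon—so that the per-triangle cost does not accumulate. Once the incremental rotation description above is in place this is routine, and the linearity of the remaining two stages follows directly from the already-established \Cref{thm:polytope sampling}.
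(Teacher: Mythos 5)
Your proof is correct and follows exactly the route the paper intends: the paper gives no formal proof of this theorem, instead asserting in one sentence that combining \Cref{alg:polytope sampling} (via \Cref{thm:polytope sampling}) with \Cref{thm:confined measure equivalence} and the ``(linear-time) reconstruction procedure'' yields the result, which is precisely the decomposition you carry out. Your added detail on why the fan-triangulation reconstruction costs only $O(1)$ per vertex fills in the one step the paper merely asserts, and it is accurate.
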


\Cref{fig:sample timings} shows the time needed to generate 1,000,000 random $n$-gons on an Apple M1 Ultra personal computer using 16 parallel CPU threads with a reference implementation of \Cref{alg:sampling} written as a compiled \emph{Mathematica} function. This implementation is available on Github~\cite{CPOP} and is also included in the supplementary information for this paper.

\begin{figure}[t]
	\centering
		\includegraphics[height=2.5in]{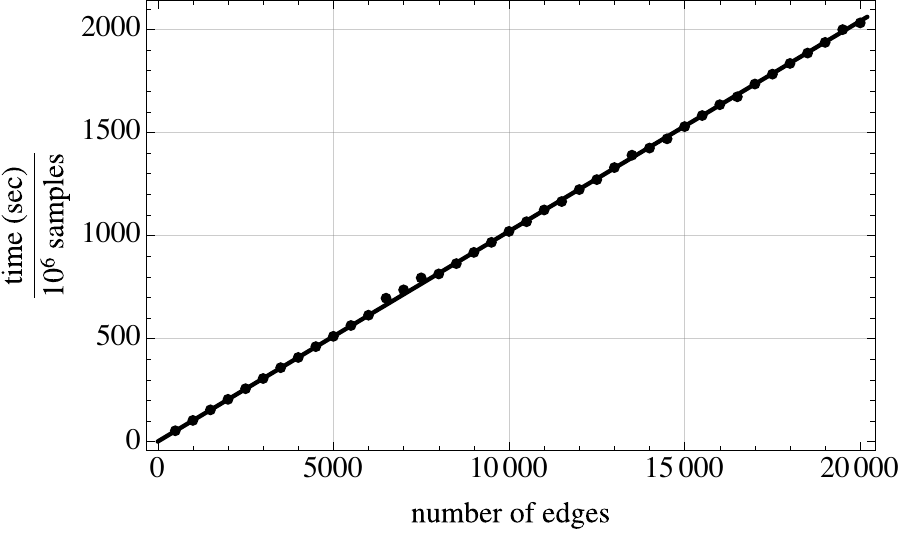}
	\caption{Time per million samples for $n$-gons from $n=500$ to $n=20,\!000$ in steps of $500$. The fitted line has slope $0.102$ (with $R^2 > 0.9999$).}
	\label{fig:sample timings}
\end{figure}

\subsection{Equilibrium Distribution of Chord Lengths}
\label{sec:equilibrium}

As Marchal observes, the invariant measure of the Markov chain $x_1, x_2, \dots$ generated by the conditional density~\eqref{eq:conditional density} is given by
\[
	\mu(A) = \int_A \left(1-\cos\left(\pi y\right)\right)dy
\]
for any Borel set $A$.

To see this, recall that $1-\cos\left(\pi y\right) = 2\sin^2\left(\frac{\pi}{2}y\right)$ and compute for any Borel set $A$
\begin{multline*}
	\int_{[0,1]} P_x(A) \mu(dx) = \int_0^1 \left[\int_A \one_{\{y \geq 1-x\}}\frac{\pi}{2} \frac{\sin\left(\frac{\pi}{2}y \right)}{\sin\left(\frac{\pi}{2} x\right)} dy\right] 2\sin^2\left(\frac{\pi}{2}x\right) dx  \\
	= \int_A \left[ \int_{1-y}^1 \frac{\pi}{2}\sin\left(\frac{\pi}{2} x\right)dx\right]2\sin\left(\frac{\pi}{2} y\right)dy = \int_A 2\sin^2\left(\frac{\pi}{2} y\right)dy = \mu(A).
\end{multline*}

When $n$ is large, this means the $x_i$ generated by \Cref{alg:polytope sampling} (and hence also the $d_i$) will be distributed according to $\mu$.

\begin{corollary}\label{cor:asymptotic chordlength}
When $n$ is large the density of the $d_i$ generated by \Cref{alg:polytope sampling} is asymptotic to
\[
	f(t) = 1-\cos(\pi t).
\]
In particular, when $n$ is large the average of the chord lengths $(d_1, \dots , d_{n-3})$ is asymptotic to
\[
	\frac{1}{2} +\frac{2}{\pi^2} \approx 0.7026.
\]
\end{corollary}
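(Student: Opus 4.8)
The plan is to pin down the limiting density $f(t)=1-\cos(\pi t)$ first and then read off the mean by a single integration. Rather than track the algorithm's reversal and rejection steps directly, I would invoke \Cref{thm:polytope sampling}: the output $(d_1,\dots,d_{n-3})$ is \emph{exactly} uniform on $\mathcal{P}_n(1)$, so the marginal of a coordinate $d_i$ is a purely geometric quantity. Freezing $d_i=t$ decouples the defining inequalities into a left chain on $(d_1,\dots,d_{i-1})$ ending with $d_{i-1}\ge 1-t$ and a right chain on $(d_{i+1},\dots,d_{n-3})$ starting with $d_{i+1}\ge 1-t$, so the marginal density of $d_i$ is proportional to $A_{i-1}(t)\,A_{n-3-i}(t)$, where $A_k(t):=\vol\{(y_1,\dots,y_k)\in[0,1]^k: y_1\ge 1-t,\ y_j+y_{j+1}\ge1\ \forall j\}$. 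Unwinding the innermost integral gives the recursion $A_k=T A_{k-1}$, $A_0=\one$, for the symmetric integral operator $(T\phi)(t)=\int_{1-t}^1\phi(s)\,ds$ on $L^2[0,1]$; that is, $A_k=T^k\one$.

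The heart of the argument is the spectral analysis of $T$. It is Hilbert--Schmidt, hence compact and self-adjoint, and $T^2$ has the strictly positive kernel $\min(s,t)$, so by Jentzsch's theorem its leading eigenvalue is simple with a positive eigenfunction. Differentiating the eigenequation $\lambda\phi(t)=\int_{1-t}^1\phi(s)\,ds$ twice collapses it to $\phi''=-\lambda^{-2}\phi$ with boundary conditions $\phi(0)=0$ and $\phi'(1)=0$; this forces $\phi(t)=\sin(t/\lambda)$ subject to $\cos(1/\lambda)=0$, so the eigenvalues are $\lambda_k=\tfrac{2}{(2k+1)\pi}$ and the dominant one is $\lambda_0=\tfrac{2}{\pi}$ with ground state $\phi_0(t)=\sin\!\bigl(\tfrac{\pi}{2}t\bigr)$. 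The spectral theorem then yields $A_k(t)=c\,\lambda_0^{\,k}\,\phi_0(t)\bigl(1+O((\lambda_1/\lambda_0)^k)\bigr)$ with gap ratio $\lambda_1/\lambda_0=\tfrac13$. Consequently, whenever $\min(i,n-i)\to\infty$ both factors converge (after rescaling) to multiples of $\phi_0$, so the normalized marginal of $d_i$ tends to a constant multiple of $\phi_0(t)^2=\sin^2\!\bigl(\tfrac{\pi}{2}t\bigr)$; since $\int_0^1 2\sin^2(\tfrac{\pi}{2}t)\,dt=1$, this normalizes to exactly $1-\cos(\pi t)$. This also refines the stationary-measure computation preceding the corollary: the conditional kernel~\eqref{eq:conditional density} is precisely the Doob $h$-transform of $T$ by $\phi_0$, whose invariant density is $\propto\phi_0^2\propto 1-\cos(\pi t)$.

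For the mean, I would show the expected sample average $\tfrac{1}{n-3}\sum_i \mathbb{E}[d_i]$ converges to $\int_0^1 t\,f(t)\,dt$. The geometric decay of the spectral remainder means every coordinate with $\min(i,n-i)$ at least a slowly growing threshold has $\mathbb{E}[d_i]$ within any prescribed tolerance of this integral, while the boundary layer has vanishing relative size and contributes $o(1)$ to the average because each $d_i\in[0,1]$. A routine integration by parts gives $\int_0^1 t\,(1-\cos(\pi t))\,dt=\tfrac12+\tfrac{2}{\pi^2}$, which is the claimed limit.

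The main obstacle is making the boundary behavior honest: for a \emph{fixed} index $i$ the factor $A_{i-1}$ never relaxes to its $\phi_0$ profile, so the marginal of $d_i$ is genuinely \emph{not} $1-\cos(\pi t)$ near either end, and the statement must be read as a bulk (equivalently, index-averaged) limit. Quantifying that the non-relaxed boundary layer is thin enough to be invisible in the average is exactly where the spectral gap $\lambda_1/\lambda_0=\tfrac13$ does the work; everything else is either cited (\Cref{thm:polytope sampling}) or elementary.
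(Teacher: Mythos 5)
Your argument is correct, and it takes a genuinely different route from the paper. The paper's proof is the short computation immediately preceding the corollary: it verifies that the measure with density $1-\cos(\pi y)$ is invariant for the Markov kernel \eqref{eq:conditional density} and then asserts that for large $n$ the $x_i$ (hence the $d_i$) are distributed according to this invariant measure; the mean is then the implicit integral $\int_0^1 t(1-\cos(\pi t))\,dt=\tfrac12+\tfrac{2}{\pi^2}$. You instead bypass the chain entirely and compute the marginal of the exactly-uniform output on $\mathcal{P}_n(1)$ as $A_{i-1}(t)A_{n-3-i}(t)$ with $A_k=T^k\mathbf{1}$ for the transfer operator $(T\phi)(t)=\int_{1-t}^1\phi(s)\,ds$, then diagonalize $T$. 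What this buys is substantial: the paper's step from ``$\mu$ is invariant'' to ``the $d_i$ are asymptotically $\mu$-distributed'' silently requires convergence to equilibrium, which is exactly what your spectral gap supplies, with an explicit rate $O(3^{-\min(i,n-i)})$; you also correctly isolate the boundary-layer caveat (for fixed $i$ the marginal is genuinely not $1-\cos(\pi t)$, so the density statement is a bulk/index-averaged one), and your observation that \eqref{eq:conditional density} is the Doob $h$-transform of $T$ by its ground state $\sin(\tfrac{\pi}{2}t)$ explains where Marchal's kernel comes from and why its invariant density is $\propto\phi_0^2$. One small correction: the eigenvalues of $T$ are $(-1)^k\tfrac{2}{(2k+1)\pi}$ (they alternate in sign, since $\int_{1-t}^1\sin(s/\lambda)\,ds=(-1)^k\lambda\sin(t/\lambda)$ when $\cos(1/\lambda)=0$), not all positive as you wrote; this does not affect anything, since the top eigenvalue $\tfrac{2}{\pi}$ is still simple and positive and the gap ratio in modulus is still $\tfrac13$.
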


We see these asymptotics in practice, even when $n$ is not particularly large: see~\Cref{fig:large n chordlengths,fig:chordlength asymptotics}.

\begin{figure}[t]
	\centering
		\includegraphics[width=.4\textwidth]{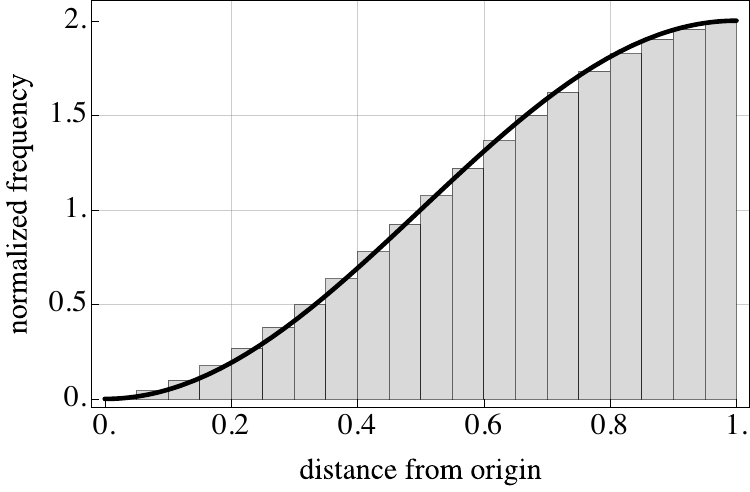}
		\qquad
		\includegraphics[width=.4\textwidth]{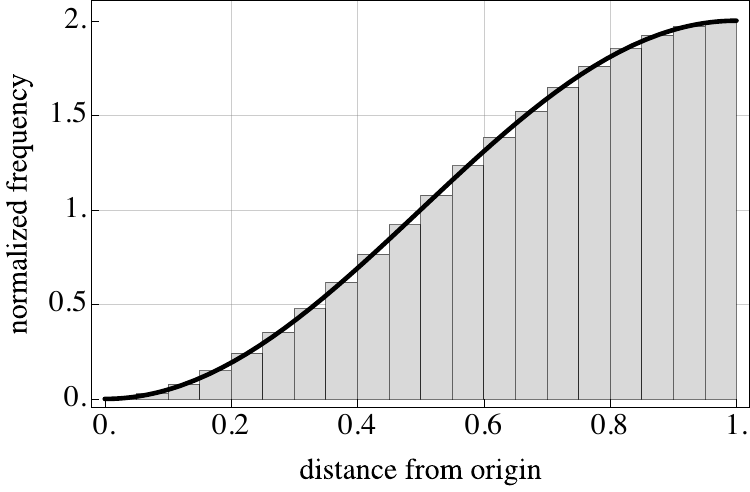}
	\caption{For (left) $n=20$ and $N=1$ million and (right) $n=20,\!000$ and $N=10,\!000$, we generated $N$ samples of $(d_1,\dots, d_{n-3})$ with \Cref{alg:polytope sampling}. The plots show the histograms of the resulting $N(n-3)$ numbers (i.e., all the $d_i$ for all the samples, treated as a single pool of numbers), in both cases plotted against the asymptotic density $f(t) = 1-\cos(\pi t)$.}
	\label{fig:large n chordlengths}
\end{figure}

\section{A Combinatorial Digression}
\label{sec:combinatorics}

\subsection{Entringer Numbers} 
\label{sub:entringer numbers}

We now take a digression into some combinatorial results which will help us to determine expected chord lengths in random confined polygons. First, recall the definition of an Entringer number~\cite{entringerCombinatorialInterpretationEuler1966,seidelEinfacheEntstehungsweiseBernoullischen1877,kempnerShapePolynomialCurves1933,poupardNouvellesSignificationsEnumeratives1982,arnoldBernoulliEulerUpdownNumbers1991}:

\begin{definition}\label{def: entringer}
	Let $0 \leq k \leq n$ and define the \emph{Entringer number} $\ent{n}{k}$ as the number of (down-up) alternating permutations $\tau$ of $\{1, \dots, n+1\}$ with $\tau(1) = k+1$.
\end{definition}

This indexing is chosen, in part, so that $E_{n,n} = E_n$, where we recall that the Euler number $E_n$ counts the number of alternating (or reverse-alternating) permutations of $\{1,\dots, n\}$. Unfortunately, this means that $E_{n,k}$ and $E_n$ are counts of permutations on different base sets, but these notational conventions seem quite standard in the literature and we do not propose to change them here. 

See Conway and Guy~\cite[p.~110]{conwayBookNumbers1996}, Millar, Sloane, and Young~\cite{millarNewOperationSequences1996}, Henry and Wanner~\cite{henryZigzagsBurgiBernoulli2019}, Stanley~\cite[Section~2]{stanleySurveyAlternatingPermutations2010}, and \seqnum{A008280}/\seqnum{A008281}/\seqnum{A008282} in OEIS~\cite{oeis} for more on Entringer numbers, which satisfy the following recurrence (see, e.g.,~\cite[Exercise~1.141]{stanleyEnumerativeCombinatorics2011}):
\begin{equation}\label{eq:Entringer recurrence}
	\ent{0}{0} = 1, \quad \ent{n}{0} = 0 \text{ for } n \geq 1, \quad \ent{n+1}{k+1} = \ent{n+1}{k} + \ent{n}{n-k}\text{ for } n \geq k \geq 0.
\end{equation}

In fact, Entringer~\cite{entringerCombinatorialInterpretationEuler1966} showed that the Entringer numbers have the following formula in terms of Euler numbers:
\begin{equation}\label{eq:entringer formula}
	\ent{n}{k} = \sum_{r=0}^{\lfloor (k-1)/2 \rfloor}(-1)^r \binom{k}{2r+1} E_{n-2r-1}.
\end{equation}

An exponential generating function for the classical Entringer numbers is also known:

\begin{proposition}[{see Graham, Knuth, and Patashnik~\cite[Exercise~6.75]{grahamConcreteMathematicsFoundation1994} and Stanley~\cite[(2.2)]{stanleySurveyAlternatingPermutations2010}}]\label{prop:entringer egf}
	\[
		\sum_{m,n = 0}^\infty E_{m+n,[m,n]} \frac{x^m}{m!}\frac{y^n}{n!}= \frac{\cos x + \sin x}{\cos(x+y)},
	\]
	where 
	$[m,n] = \begin{cases} m & \text{if } m+n \text{ odd} \\ n & \text{if } m+n \text{ even.} \end{cases}$
\end{proposition}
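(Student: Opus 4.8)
The plan is to show that both sides, viewed as formal power series in $x,y$, satisfy one and the same first-order functional differential equation and agree when $y=0$, and then to argue that these two conditions pin down every coefficient. Write $F(x,y)$ for the left-hand side and $G(x,y) = \frac{\cos x + \sin x}{\cos(x+y)}$ for the right-hand side, and set $c_{m,n} := \ent{m+n}{[m,n]}$ so that $F = \sum_{m,n\geq 0} c_{m,n}\frac{x^m}{m!}\frac{y^n}{n!}$. The key identity I would aim for is
\[
\partial_x F - \partial_y F = F(-x,-y),
\]
equivalently, at the level of coefficients, $c_{m+1,n} - c_{m,n+1} = (-1)^{m+n} c_{m,n}$ for all $m,n \geq 0$.

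The heart of the argument, and the step I expect to be the main obstacle, is deriving this coefficient recurrence from the Entringer recurrence~\eqref{eq:Entringer recurrence}, since the parity-dependent index $[m,n]$ and the reflection $k \mapsto n-k$ in~\eqref{eq:Entringer recurrence} interact in a way that must be unwound by hand. I would split into the cases $m+n$ even and $m+n$ odd. For instance, when $N := m+n$ is even, the definition of $[\cdot,\cdot]$ gives $c_{m+1,n} = \ent{N+1}{m+1}$ and $c_{m,n+1} = \ent{N+1}{m}$, so~\eqref{eq:Entringer recurrence} (with its "$n$" set to $N$ and "$k$" set to $m$) yields $c_{m+1,n} - c_{m,n+1} = \ent{N}{N-m} = \ent{N}{n} = c_{m,n} = (-1)^N c_{m,n}$; the case $N$ odd is symmetric and produces the sign $-1$. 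Establishing both cases confirms the displayed functional equation for $F$.

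Next I would record the boundary value. Setting $y=0$ kills all terms with $n\geq 1$, and since $[m,0]=m$ when $m$ is odd and $[m,0]=0$ when $m$ is even, only the odd-index terms $\ent{m}{m}=E_m$ survive (together with $\ent{0}{0}=1$). By André's formula $\sum_m E_m \frac{x^m}{m!} = \sec x + \tan x$, whose odd part is $\tan x$, this gives $F(x,0) = 1 + \tan x$; analogously $F(0,y) = \sec y$. Finally, rewriting the recurrence as $c_{m,n+1} = c_{m+1,n} - (-1)^{m+n} c_{m,n}$ shows that the entire row $\{c_{m,n}\}_{m\geq 0}$ is determined by the preceding row, hence that any formal power series solution of the functional equation is uniquely determined by its restriction to $y=0$.

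It then remains only to check that $G$ satisfies the same two conditions. A direct computation gives $\partial_x G - \partial_y G = \frac{\cos x - \sin x}{\cos(x+y)}$, which equals $G(-x,-y)$ since cosine is even and sine is odd; and $G(x,0) = \frac{\cos x + \sin x}{\cos x} = 1 + \tan x = F(x,0)$. By the uniqueness just noted, $F = G$, which is the claim. As a sanity check on the shape of the answer, differentiating the functional equation once more yields $(\partial_x-\partial_y)^2 F = -F$, that is, $F_{ss} = -\tfrac14 F$ in the coordinate $s = x-y$, which explains the appearance of $\cos$ and $\sin$ on the right.
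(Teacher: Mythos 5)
Your argument is correct. Note first that the paper itself offers no proof of this proposition; it is quoted from Graham--Knuth--Patashnik and Stanley, so there is no in-paper argument to compare against. Your derivation is a valid, self-contained proof from the Entringer recurrence~\eqref{eq:Entringer recurrence}. I checked the crux: with $N=m+n$ even one has $c_{m+1,n}=\ent{N+1}{m+1}$, $c_{m,n+1}=\ent{N+1}{m}$, and the recurrence gives $c_{m+1,n}-c_{m,n+1}=\ent{N}{N-m}=\ent{N}{n}=c_{m,n}$; with $N$ odd one gets $c_{m+1,n}-c_{m,n+1}=\ent{N+1}{n}-\ent{N+1}{n+1}=-\ent{N}{m}=-c_{m,n}$, so the coefficient identity $c_{m+1,n}-c_{m,n+1}=(-1)^{m+n}c_{m,n}$, hence the functional equation $\partial_x F-\partial_y F=F(-x,-y)$, holds in both parities. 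The boundary computation $F(x,0)=1+\tan x$ (only $\ent{m}{m}=E_m$ for $m$ odd and $\ent{0}{0}=1$ survive), the uniqueness argument (the recurrence propagates row $n$ to row $n+1$), and the verification that $G$ satisfies $\partial_x G-\partial_y G=\frac{\cos x-\sin x}{\cos(x+y)}=G(-x,-y)$ with $G(x,0)=1+\tan x$ are all correct, and the formal-power-series setting is legitimate since $\cos(x+y)$ has constant term $1$. This is essentially the standard proof of the bivariate Entringer generating function, and it is a reasonable choice given that the only ingredient available in the paper is the recurrence~\eqref{eq:Entringer recurrence}.
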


If we sum the Entringer numbers $\ent{n}{k}$ over $k$, we get the count of all alternating permutations on $\{1,\dots , n+1\}$: 
\begin{equation}\label{eq:Entringer sum}
	E_{n+1} = \sum_{k=0}^n \ent{n}{k}.
\end{equation}

\subsection{Augmented Zigzag Posets} 
\label{sub:augmented zigzag posets}

Now we describe a poset whose number of linear extensions is related to expected values of chord lengths in confined polygons.

\begin{definition}\label{def:augmented zigzag}
	Let $Z_{n,i}$ be the poset with ground set $\{0,\dots , n\}$ and partial order given by
	\[
		0 \preceq i, \qquad i \succeq i-1 \preceq i-2 \succeq \dots (\preceq \succeq) 1, \qquad i \succeq i+1 \preceq i+2 \succeq \dots (\preceq \succeq) n,
	\]
	where the direction of the inequalities at the ends of the chain depend on the parity of $i$ and $n-i$: if $i$ is odd, then $2 \preceq 1$, and if $i$ is even, then $2 \succeq 1$; if $n-i$ is odd, then $n-1 \succeq n$, and if $n-i$ is even, then $n-1 \preceq n$. The Hasse diagram for $Z_{n,i}$ is shown in \Cref{fig:Zni}, and the Hasse diagrams for each of the $Z_{4,i}$ are shown in \Cref{fig:Z examples}. We call $Z_{n,i}$ an \emph{augmented zigzag poset}.
\end{definition}

\begin{figure}[htbp]
	\centering
		\begin{tikzpicture}
			\node[my node,label={above:$i$}] (i) at (0,1){};
			\node[my node,label={below:$i-1$}] (i-1) at (-1,0){};
			\node[my node,label={above:$i-2$}] (i-2) at (-2,1){};
			\node[my node,label={below:$i-3$}] (i-3) at (-3,0){};
			\node (i-4) at (-4,1) {};
			\node[my node,label={below:$i+1$}] (i+1) at (1,0){};
			\node[my node,label={above:$i+2$}] (i+2) at (2,1){};
			\node[my node,label={below:$i+3$}] (i+3) at (3,0){};
			\node (i+4) at (4,1) {};
			\node[my node,label={below:$0$}] (0) at (0,0){};
			\draw (i-3) -- (i-2) -- (i-1) -- (i) -- (i+1) -- (i+2) -- (i+3);
			\draw (0) -- (i);
			\draw[dotted] (i-4) -- (i-3);
			\draw[dotted] (i+3) -- (i+4);
		\end{tikzpicture}
	\caption{Hasse diagram for the poset $Z_{n,i}$.}
	\label{fig:Zni}
\end{figure}
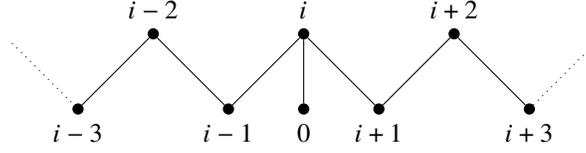

\begin{figure}[htbp]
	\centering
		\begin{tikzpicture}
			\node[my node,label={above:$1$}] (1) at (0,1){};
			\node[my node,label={below:$2$}] (2) at (1,0){};
			\node[my node,label={above:$3$}] (3) at (2,1){};
			\node[my node,label={below:$4$}] (4) at (3,0){};
			\node[my node,label={below:$0$}] (0) at (0,0){};
			\draw (1) -- (2) -- (3) -- (4);
			\draw (0) -- (1);
		\end{tikzpicture}
		\quad
		\begin{tikzpicture}
			\node[my node,label={above:$2$}] (2) at (0,1){};
			\node[my node,label={below:$1$}] (1) at (-1,0){};
			\node[my node,label={below:$3$}] (3) at (1,0){};
			\node[my node,label={above:$4$}] (4) at (2,1){};
			\node[my node,label={below:$0$}] (0) at (0,0){};
			\draw (1) -- (2) -- (3) -- (4);
			\draw (0) -- (2);
		\end{tikzpicture}
		\quad
		\begin{tikzpicture}
			\node[my node,label={above:$3$}] (3) at (0,1){};
			\node[my node,label={below:$2$}] (2) at (-1,0){};
			\node[my node,label={above:$1$}] (1) at (-2,1){};
			\node[my node,label={below:$4$}] (4) at (1,0){};
			\node[my node,label={below:$0$}] (0) at (0,0){};
			\draw (1) -- (2) -- (3) -- (4);
			\draw (0) -- (3);
		\end{tikzpicture}
		\quad
		\begin{tikzpicture}
			\node[my node,label={above:$2$}] (2) at (-2,1){};
			\node[my node,label={below:$1$}] (1) at (-3,0){};
			\node[my node,label={below:$3$}] (3) at (-1,0){};
			\node[my node,label={above:$4$}] (4) at (0,1){};
			\node[my node,label={below:$0$}] (0) at (0,0){};
			\draw (1) -- (2) -- (3) -- (4);
			\draw (0) -- (4);
		\end{tikzpicture}
	\caption{Hasse diagrams for the posets $Z_{4,i}$ for $i=1,2,3,4$.}
	\label{fig:Z examples}
\end{figure}
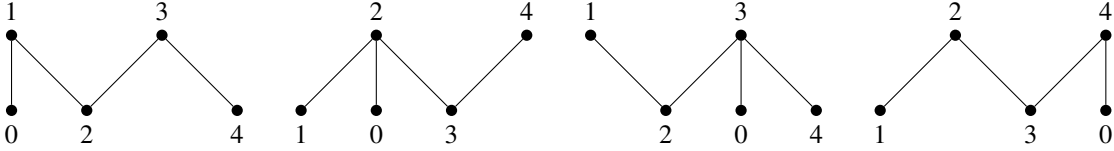

If we remove 0 from the ground set of $Z_{n,i}$, the resulting induced poset is (if $i$ is even) Stanley's \emph{zigzag poset} $Z_n$~\cite[Exercise~3.66]{stanleyEnumerativeCombinatorics2011} or (if $i$ is odd) the opposite $Z_n^{\text{op}}$ of the zigzag poset. Recall that the zigzag poset has ground set $\{1,\dots , n\}$ and cover relations given by $k \preceq k+1$ if $k$ is odd and $k \succeq k+1$ if $k$ is even.

By reversing the order of the labels, $Z_{n,i}$ is equivalent to $Z_{n,n+1-i}$ for all $n$ and all $1 \leq i \leq n$. Also, $Z_{n,1}$ (and hence also $Z_{n,n}$) is equivalent to $Z_{n+1}$.

Let $\Gamma_{n,i}$ be the cover graph of $Z_{n,i}$ (i.e., ignoring all order information from the Hasse diagram). Then $\Gamma_{n,1}$, $\Gamma_{n,2}$, and $\Gamma_{n,3}$ are copies of the Dynkin diagrams $A_{n+1}$, $D_{n+1}$, and $E_{n+1}$, respectively.

\begin{notation}
	For a poset $P = (X,\preceq)$, let $e(P)$ denote the number of linear extensions of $P$; that is, the number of total orders on $X$ compatible with $\preceq$.
\end{notation}

\begin{example}
	Let $Z_n$ be the zigzag poset. Then $e(Z_n) = E_n$~\cite[Exercise~3.66(c)]{stanleyEnumerativeCombinatorics2011}. Hence, $e(Z_{n,1}) = E_{n+1} = e(Z_{n,n})$.
\end{example}

\subsection{The Number of Linear Extensions of $Z_{n,i}$}
\label{sub:linear extensions of zni}

In \Cref{sec:chord lengths}, we will relate expected values of the chord lengths $d_i$ to the $e(Z_{n,i})$, so it would be nice to be able to compute these numbers. In general, the problem of computing the number of linear extensions of a poset is $\# P$-complete~\cite{linialHardEnumerationProblems1986,brightwellCountingLinearExtensions1991}, but Atkinson~\cite{atkinsonComputingNumberLinear1990} showed that there is an $O(n^2)$ algorithm for computing the number of linear extensions of any poset of size $n$ whose cover graph is a tree. Since $\Gamma_{n,i}$ is a tree, Atkinson's result applies and, by unpacking the algorithm a bit, we will get a formula for $e(Z_{n,i})$ in terms of Entringer numbers, for which we have the recurrence~\eqref{eq:Entringer recurrence}, the explicit formula~\eqref{eq:entringer formula} in terms of Euler numbers, and the exponential generating function given in \Cref{prop:entringer egf}. 

To explain Atkinson's algorithm, we introduce some terminology.

\begin{definition}\label{def:spectrum}
	Suppose $P = (X,\preceq)$ is a poset, where $X$ is a finite set of cardinality $m$. If $\alpha \in X$, the $\alpha$-spectrum of $P$ is the sequence $(\lambda_1, \dots , \lambda_m)$ where $\lambda_i$ is the number of linear extensions of $P$ so that $\alpha$ has rank $i$ in the total ordering.
\end{definition}

\begin{proposition}[{Atkinson~\cite[Lemma~1]{atkinsonComputingNumberLinear1990}}]\label{prop:splitting}
	Let $P = (X, \preceq_P)$ and $Q = (Y, \preceq_Q)$ be disjoint posets of sizes $p$ and $q$, respectively. Let $\alpha \in X$ and $\beta \in Y$, let $(\lambda_1, \dots , \lambda_p)$ be the $\alpha$-spectrum of $P$, and let $(\mu_1, \dots , \mu_q)$ be the $\beta$-spectrum of $Q$. Consider the poset $R = (X \cup Y, \preceq)$ whose covering relations are those of $P$ and $Q$ together with the relation $\alpha \preceq \beta$. Then the $\alpha$-spectrum of $R$ is $(\nu_1, \dots , \nu_{p+q})$ where
	\[
		\nu_k = \sum_{i=\max\{1,k-q\}}^{\min\{p,k\}} \lambda_i \binom{k-1}{i-1}\binom{p+q-k}{p-i} \sum_{j=k-i+1}^q\mu_j.
	\]
\end{proposition}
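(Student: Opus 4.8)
The plan is to count linear extensions of $R$ in which $\alpha$ occupies position $k$, and to do so by conditioning on how the elements of $R$ are interleaved around $\alpha$. A linear extension of $R$ is a total order on $X \cup Y$ compatible with $\preceq_P$, $\preceq_Q$, and the single added relation $\alpha \preceq \beta$. The key structural observation is that, apart from the constraint $\alpha \preceq \beta$, the relations of $P$ and $Q$ act independently on the two ground sets: any linear extension of $R$ restricts to a linear extension of $P$ on $X$ and a linear extension of $Q$ on $Y$, and conversely a pair of such extensions can be merged (shuffled) into an extension of $R$ in any way that respects the relative orders within each block and places $\alpha$ before $\beta$. So I would build the count as (number of valid $P$-extensions with $\alpha$ in a given internal rank) $\times$ (number of valid $Q$-extensions with $\beta$ in a compatible position) $\times$ (number of legal interleavings).

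Concretely, suppose $\alpha$ has rank $i$ within the $P$-extension (contributing the factor $\lambda_i$) and lands in position $k$ of the merged order on $p+q$ elements. Then among the $k-1$ elements before $\alpha$, exactly $i-1$ must come from $X$ (the elements of $X$ below $\alpha$ in $P$) and the remaining $k-i$ come from $Y$; the number of ways to choose which of the $k-1$ earlier slots are occupied by the $X$-elements is $\binom{k-1}{i-1}$. Similarly, among the $p+q-k$ elements after $\alpha$, exactly $p-i$ come from $X$, giving $\binom{p+q-k}{p-i}$ ways to interleave them. This accounts for the two binomial coefficients: they count the shuffles of the fixed relative orders of the $X$-part and the $Y$-part on either side of $\alpha$. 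The remaining factor must encode the constraint $\alpha \preceq \beta$: since $\alpha$ sits at position $k$ and $k-i$ elements of $Y$ already precede it, $\beta$ must occupy one of the positions $j$ (in the $\beta$-spectrum of $Q$) with $j \geq k-i+1$, so that $\beta$ falls after $\alpha$ in the merged order. Summing $\mu_j$ over $j = k-i+1$ to $q$ gives the allowed $Q$-extensions, which is exactly the innermost sum.

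Assembling these pieces, fixing $k$ and summing over the admissible values of $i$ — which range from $\max\{1,k-q\}$ (so that at most $q$ elements of $Y$ precede $\alpha$) to $\min\{p,k\}$ (so that $\alpha$'s rank does not exceed either $p$ or $k$) — yields precisely the stated formula for $\nu_k$. The main thing to verify carefully is the bookkeeping of the interleaving argument: one must check that every extension of $R$ is counted exactly once under this decomposition, i.e., that specifying the internal rank $i$ of $\alpha$, the rank $j$ of $\beta$, and the two interleaving patterns on either side of $\alpha$ determines the merged order bijectively. This bijective check is where the argument's only real subtlety lies; once it is in place, the identity follows by multiplying the independent choices and summing. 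I would present this as a counting bijection rather than an induction, since the independence of the $P$- and $Q$-relations (given the one bridging relation $\alpha \preceq \beta$) makes the shuffle decomposition transparent.
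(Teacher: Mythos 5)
Your argument is correct: the decomposition of a linear extension of $R$ into (a linear extension of $P$, a linear extension of $Q$, a shuffle pattern) subject to $\alpha$ preceding $\beta$ is a bijection, and your bookkeeping of the two binomial coefficients and the cutoff $j \geq k-i+1$ (since exactly $k-i$ elements of $Y$, namely the first $k-i$ in the chosen $Q$-extension, precede $\alpha$ in any such shuffle) is exactly right, as are the summation limits on $i$. Note that the paper does not prove this statement at all --- it is quoted directly as Lemma~1 of Atkinson's paper --- so there is no in-paper proof to compare against; your shuffle argument is the standard one and matches Atkinson's. One small wording slip: the $i-1$ elements of $X$ placed before $\alpha$ are those \emph{preceding $\alpha$ in the chosen linear extension of $P$}, not necessarily those below $\alpha$ in the partial order $\preceq_P$; this does not affect the count.
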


Then Atkinson's algorithm consists of splitting the cover graph of the poset (which, again, is a tree) at some vertex and recursively computing the spectra of each side, then computing the $\nu_k$ with \Cref{prop:splitting}  and summing over $k$.

Consider $Z_{m+1}$, the zigzag poset. This is the poset on $\{1,\dots, m+1\}$ with $1 \preceq 2 \succeq 3 \preceq \dots$. Then the $1$-spectrum of $Z_{m+1}$ consists of counts of the number of reverse-alternating permutations $\tau$ of $\{1, \dots , m+1\}$ with $\tau(1)$ specified. Since the reverse map $\tau \mapsto \overline{\tau}$ defined by $\overline{\tau}(i) := m+2-\tau(i)$ sends reverse-alternating permutations to alternating permutations, these are counted by Entringer numbers:

\begin{lemma}\label{lem:zigzag 1-spectrum}
	The 1-spectrum of $Z_{m+1}$ is
	\[
		\left(\ent{m}{m}, \dots , \ent{m}{0}\right);
	\]
	that is, the $r$th term in the 1-spectrum is $\ent{m}{m+1-r}$.
\end{lemma}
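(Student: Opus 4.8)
The plan is to set up an explicit bijection between linear extensions of $Z_{m+1}$ and reverse-alternating permutations of $\{1,\dots,m+1\}$, and then read off the 1-spectrum by tracking the rank of the poset element $1$.

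First I would encode each linear extension of $Z_{m+1}$ by its rank function: a bijection $\pi \colon \{1,\dots,m+1\} \to \{1,\dots,m+1\}$ assigning to each poset element its position in the total order. The cover relations $1 \preceq 2 \succeq 3 \preceq \cdots$ translate directly into inequalities on ranks, namely $\pi(1) < \pi(2) > \pi(3) < \cdots$, so the compatible rank functions are exactly the reverse-alternating permutations $\pi \in S_{m+1}$ (read as the word $\pi(1)\pi(2)\cdots$). Under this correspondence, the element $1$ has rank $r$ in the total order precisely when $\pi(1) = r$, so the $r$th term $\lambda_r$ of the 1-spectrum equals the number of reverse-alternating permutations of $\{1,\dots,m+1\}$ whose first value is $r$.

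Next I would apply the reverse map $\tau \mapsto \overline{\tau}$, $\overline{\tau}(i) := m+2-\tau(i)$, which (as noted just before the statement) is a bijection from reverse-alternating to down-up alternating permutations of $\{1,\dots,m+1\}$. It sends a permutation with first value $r$ to one with first value $m+2-r$, so $\lambda_r$ equals the number of alternating permutations of $\{1,\dots,m+1\}$ with first value $m+2-r = (m+1-r)+1$. By \Cref{def: entringer} (with $n = m$), this count is exactly $\ent{m}{m+1-r}$. Letting $r$ run from $1$ to $m+1$ then yields the stated spectrum $\left(\ent{m}{m},\dots,\ent{m}{0}\right)$.

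There is no genuine combinatorial obstacle here; the entire content lies in matching conventions, so the step requiring the most care is the bookkeeping. One must verify that the cover relations induce the up-down (reverse-alternating) pattern on ranks rather than the down-up pattern, and that the reverse map shifts the Entringer index so that rank $r$ lands precisely on $\ent{m}{m+1-r}$ and not on an off-by-one neighbor. Once the alternating/reverse-alternating and rank/value conventions are pinned down, the identification is immediate.
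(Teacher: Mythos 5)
Your argument is correct and matches the paper's own (brief) justification: the paper likewise identifies linear extensions of $Z_{m+1}$ with reverse-alternating permutations via the rank function, notes that the $r$th spectrum entry counts those with first value $r$, and applies the reverse map $\overline{\tau}(i) = m+2-\tau(i)$ to convert these to down-up alternating permutations counted by $\ent{m}{m+1-r}$. Your version just makes the convention-checking explicit, which the paper leaves implicit.
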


Summing gives the number of linear extensions of $Z_{m+1}$, which we know is the Euler number $E_{m+1}$, so this just recapitulates the identity~\eqref{eq:Entringer sum}.

%
%
%
%
%

Now we will compute the 2-spectrum of $Z_{m+1}$:

\begin{proposition}\label{prop:zigzag 2-spectrum}
	The $r$th term in the 2-spectrum of $Z_{m+1}$ is
	\[
		(r-1)\ent{m-1}{r-2}.
	\]
\end{proposition}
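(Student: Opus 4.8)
The plan is to reuse the combinatorial dictionary set up just before \Cref{lem:zigzag 1-spectrum}: a linear extension of $Z_{m+1}$ records the ranks $(\rho(1), \dots, \rho(m+1))$ of its elements, and the zigzag relations $1 \preceq 2 \succeq 3 \preceq \cdots$ force $\rho(1) < \rho(2) > \rho(3) < \cdots$. Thus linear extensions of $Z_{m+1}$ correspond bijectively to reverse-alternating (up-down) permutations $\tau$ of $\{1, \dots , m+1\}$, and the $r$th term of the 2-spectrum is exactly the number of such $\tau$ with $\tau(2) = r$. So the whole statement reduces to counting up-down permutations with a prescribed peak value in the second position.

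First I would split such a $\tau$ into its first letter $a := \tau(1)$ together with the suffix $(\tau(2), \dots , \tau(m+1))$. The only constraint linking $a$ to the rest is $\tau(1) < \tau(2) = r$, so $a$ may be any of the $r-1$ values in $\{1, \dots , r-1\}$; once $a$ is chosen, the suffix is a permutation of the $m$-element set $\{1, \dots , m+1\} \setminus \{a\}$. Because positions $2, 3, 4, \dots$ satisfy $\tau(2) > \tau(3) < \tau(4) > \cdots$, this suffix is a down-up (alternating) permutation whose first letter is the prescribed value $r$. Conversely, any choice of $a \in \{1, \dots , r-1\}$ together with a down-up permutation of $\{1, \dots , m+1\} \setminus \{a\}$ beginning with $r$ reassembles into an admissible $\tau$, so this correspondence is a bijection.

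Next I would count, for fixed $a$, the down-up permutations of $S := \{1, \dots , m+1\} \setminus \{a\}$ that begin with $r$. Since the alternating condition depends only on the relative order of the entries, this count is invariant under order-isomorphism and equals the number of down-up permutations of $\{1, \dots , m\}$ whose first letter occupies the same rank that $r$ has in $S$. The key observation---and the one small point that makes the sum collapse---is that $a < r$ always, so deleting $a$ lowers the rank of $r$ by exactly one: $r$ is the $(r-1)$st smallest element of $S$ for every admissible $a$. By \Cref{def: entringer}, the number of down-up permutations of $\{1, \dots , m\}$ with first letter $r-1$ is $\ent{m-1}{r-2}$, independent of $a$.

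Summing over the $r-1$ choices of $a$ then gives $(r-1)\,\ent{m-1}{r-2}$, as claimed. I expect the main (and rather minor) obstacle to be the bookkeeping of the rank shift in the previous step, together with checking the degenerate cases $r \le 2$, where either the factor $r-1$ or the Entringer number $\ent{m-1}{r-2}$ vanishes and correctly reproduces the fact that a peak at position~2 cannot carry value $1$ or $2$. An alternative route would feed the two-element chain $1 \preceq 2$ and the tail zigzag on $\{3, \dots , m+1\}$ into Atkinson's \Cref{prop:splitting}, but the direct bijection above seems both shorter and more transparent.
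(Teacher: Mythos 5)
Your argument is correct and is essentially the paper's own proof: both identify the $r$th term of the 2-spectrum with the number of reverse-alternating $\tau$ on $\{1,\dots,m+1\}$ having $\tau(2)=r$, delete the first position and its (necessarily smaller) value to obtain a down-up permutation of $m$ letters beginning with $r-1$, and observe that this deletion map is $(r-1)$-to-one. Your explicit verification that the rank of $r$ drops by exactly one is just a more careful spelling-out of the paper's ``shifting indices and values down'' step.
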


\begin{proof}
	Equivalently, we are counting the number of reverse-alternating permutations $\tau$ on $\{1, \dots , m+1\}$ with $\tau(2) = r$. But, by dropping $1$ from the domain and $\tau(1)$ from the range and then shifting indices and values down to fill these gaps, we get an alternating permutation $\widetilde{\tau}$ on $\{1, \dots , m\}$ with $\widetilde{\tau}(1) = r-1$. There are $\ent{m-1}{r-2}$ such induced permutations and the map $\tau \mapsto \widetilde{\tau}$ is $(r-1)$-to-$1$ since $\widetilde{\tau}$ can be uniquely extended to a valid $\tau$ once we have chosen $\tau(1) < \tau(2) = r$. This proves the result.
\end{proof}

Summing \Cref{prop:zigzag 2-spectrum} over $r$ once again gives the number of linear extensions of $Z_{m+1}$, namely $E_{m+1}$. So, re-indexing with $n=m-1$ and $k=r-2$, this implies the following result which we have not been able to find in the literature:

\begin{corollary}\label{cor:entringer sum}
	For all $n\geq 1$,
	\begin{equation}\label{eq:entringer sum}
		\sum_{k=1}^{n} (k+1) \ent{n}{k} = E_{n+2}.
	\end{equation}
\end{corollary}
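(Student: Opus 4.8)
The plan is to use the elementary fact that summing any element's spectrum over all of its possible ranks recovers the total number of linear extensions of the poset: since every linear extension of $Z_{m+1}$ places the element $2$ at exactly one rank, the entries of its $2$-spectrum sum to $e(Z_{m+1})$. I would combine this with the two ingredients already in hand, namely the closed form for the $r$th entry of the $2$-spectrum supplied by \Cref{prop:zigzag 2-spectrum}, and the value $e(Z_{m+1}) = E_{m+1}$ recorded in the Example. Because $Z_{m+1}$ has $m+1$ elements, its $2$-spectrum has entries indexed by $r = 1, \dots, m+1$, so this immediately gives the unreduced identity
\[
	\sum_{r=1}^{m+1} (r-1)\,\ent{m-1}{r-2} = E_{m+1}.
\]

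The rest is bookkeeping. I would re-index by setting $k = r-2$ and $n = m-1$, so that $r-1 = k+1$, $\ent{m-1}{r-2} = \ent{n}{k}$, and the range $1 \le r \le m+1$ becomes $-1 \le k \le n$, while $E_{m+1} = E_{n+2}$. It then remains to check that the two extra boundary terms vanish. The term with $r=1$ (i.e.\ $k=-1$) carries the prefactor $r-1 = 0$, and the term with $r=2$ (i.e.\ $k=0$) contains the factor $\ent{n}{0}$, which equals $0$ for $n \ge 1$ by the Entringer recurrence~\eqref{eq:Entringer recurrence}. Discarding these two null terms collapses the sum to exactly $\sum_{k=1}^{n}(k+1)\ent{n}{k} = E_{n+2}$.

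I do not expect any real obstacle here: all of the combinatorial substance has already been absorbed into \Cref{prop:zigzag 2-spectrum}, and this corollary is a one-line consequence of the principle that spectra sum to the number of linear extensions. The only points warranting a moment's care are confirming that both boundary contributions genuinely drop out (so that the shifted summation index starts at $k=1$) and sanity-checking the smallest case $n=1$, where the identity reads $2\,\ent{1}{1} = 2 = E_3$.
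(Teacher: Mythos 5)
Your proposal is correct and is essentially identical to the paper's own argument: the paper likewise obtains the identity by summing the $2$-spectrum of $Z_{m+1}$ from \Cref{prop:zigzag 2-spectrum} over all ranks to recover $E_{m+1}$, then re-indexing with $n=m-1$, $k=r-2$. Your explicit check that the $k=-1$ and $k=0$ boundary terms vanish is a detail the paper leaves implicit, but there is no substantive difference.
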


As an immediate corollary, we can compute the expected value of $\tau(1)$ when $\tau$ is an alternating permutation:

\begin{corollary}\label{cor:expected first entry}
	The average of $\tau(1)$ over all alternating permutations on $\{1,\dots , n\}$ is
	\[
		\mathbb{E}[\tau(1) \,|\, \tau \in AP_n] = \frac{E_{n+1}}{E_n} \sim \frac{2}{\pi}(n+1).
	\]
\end{corollary}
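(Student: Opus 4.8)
The plan is to write the expectation as a ratio of two sums, recognize the numerator and denominator in terms of Entringer and Euler numbers, and then reduce everything to the identity already proved in \Cref{cor:entringer sum}. Since $|AP_n| = E_n$, we have
\[
	\mathbb{E}[\tau(1) \mid \tau \in AP_n] = \frac{1}{E_n}\sum_{\tau \in AP_n} \tau(1),
\]
so the whole content is in evaluating the numerator.

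First I would group the alternating permutations by the value of their first entry. By \Cref{def: entringer}, the number of down-up alternating permutations of $\{1,\dots,n\}$ with $\tau(1) = k+1$ is exactly $\ent{n-1}{k}$ (note the parameter is $n-1$, so that the base set is $\{1,\dots,(n-1)+1\} = \{1,\dots,n\}$). Hence
\[
	\sum_{\tau \in AP_n} \tau(1) = \sum_{k=0}^{n-1}(k+1)\ent{n-1}{k}.
\]
Next I would apply \Cref{cor:entringer sum} with its parameter set to $n-1$, which gives $\sum_{k=1}^{n-1}(k+1)\ent{n-1}{k} = E_{n+1}$. The only bookkeeping point is the index range: \Cref{cor:entringer sum} starts the sum at $k=1$, whereas my sum starts at $k=0$. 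But the $k=0$ term is harmless, since $\ent{n-1}{0} = 0$ (a down-up permutation satisfies $\tau(1) > \tau(2)$ and so cannot begin with its smallest value). Therefore the numerator equals $E_{n+1}$, and the first claimed equality $\mathbb{E}[\tau(1) \mid \tau \in AP_n] = E_{n+1}/E_n$ follows.

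For the asymptotic, I would invoke the Euler-number asymptotics $\frac{E_n}{n!} \sim 2\left(\frac{2}{\pi}\right)^{n+1}$ recorded after \eqref{eq:euler asymptotics}. Writing
\[
	\frac{E_{n+1}}{E_n} = (n+1)\cdot \frac{E_{n+1}/(n+1)!}{E_n/n!},
\]
the ratio of normalized Euler numbers tends to $\frac{2}{\pi}$, so $\frac{E_{n+1}}{E_n} \sim \frac{2}{\pi}(n+1)$, as claimed.

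I expect essentially no obstacle here: the entire combinatorial substance has already been packaged into \Cref{cor:entringer sum}, and this result is genuinely an immediate corollary. The single step requiring care is the index bookkeeping described above—correctly identifying that the relevant Entringer parameter is $n-1$ and observing that the vanishing of $\ent{n-1}{0}$ lets \Cref{cor:entringer sum} apply verbatim.
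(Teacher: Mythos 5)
Your proposal is correct and follows the paper's proof exactly: group alternating permutations of $\{1,\dots,n\}$ by the value of $\tau(1)$, recognize the counts as Entringer numbers $\ent{n-1}{k}$, apply \Cref{cor:entringer sum} to evaluate the numerator as $E_{n+1}$, and deduce the asymptotic from \eqref{eq:euler asymptotics}. The index bookkeeping you flag (the vanishing $k=0$ term) is handled implicitly in the paper by starting the sum at $k=1$, so there is nothing to add.
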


\begin{proof}
	Since there are $E_n$ alternating permutations,
	\[
		\mathbb{E}[\tau(1) \,|\, \tau \in AP_n] = \frac{1}{E_n}\; \smashoperator[r]{\sum_{\tau \in AP_n}}\; \tau(1) = \frac{1}{E_n} \sum_{k=1}^{n-1} (k+1) \ent{n-1}{k} = \frac{E_{n+1}}{E_n},
	\]
	where we used \Cref{cor:entringer sum} for the last equality. The asymptotic estimate then follows from~\eqref{eq:euler asymptotics}.
\end{proof}

Alternating permutations have $\left\lfloor \frac{n}{2} \right\rfloor$ descents, and Conger~\cite{congerRefinementEulerianNumbers2010} proved that the expectation of $\tau(1)$ over all permutations with $\left\lfloor \frac{n}{2} \right\rfloor$ descents is exactly $\left\lfloor \frac{n}{2} \right\rfloor + 1$. This is slightly smaller than the expectation $\frac{E_{n+1}}{E_n} \sim \frac{2}{\pi}(n+1)$ for alternating descents, which makes sense: an alternating permutation \emph{must} start with a descent, so the first entry should tend to be slightly larger than average.

We now have all the necessary tools in place to compute the spectrum of $Z_{n,i}$, which will lead to a formula for $e(Z_{n,i})$.

\begin{proposition}\label{prop:augmented zigzag spectrum}
	Assume $1<i$. Letting $\alpha = i-1$, the $r$th term in the $\alpha$-spectrum of $Z_{n,i}$ is
	\[
		\sum_{j=\max\{1,r-(n-i+2)\}}^{\min\{i-1,r\}} \ent{i-2}{i-1-j} \binom{r-1}{j-1}\binom{n+1-r}{i-1-j}\sum_{k = r-j+1}^{n-i+2}(k-1)\ent{n-i}{k-2}.
	\]
\end{proposition}

\begin{proof}
	The poset $Z_{n,i}$ has elements $\{0,1,\dots , n\}$. We will compute the spectrum using \Cref{prop:splitting} with $P$ being the induced poset on $\{1, \dots , i-1\}$, and $Q$ being the induced poset on $\{0,i,\dots , n\}$ (this is where we use the assumption $1 < i$, since otherwise $P$ is empty). Thus, $\beta = i$.
	
	After reversing the order of the indices, $P$ is just a copy of $Z_{i-1}$, and we know from \Cref{lem:zigzag 1-spectrum} that the $1$-spectrum of $Z_{i-1}$ (and hence the $\alpha$-spectrum of $P$) has $j$th term
	\[
		\lambda_j = \ent{i-2}{i-1-j}.
	\]
	
	On the other hand, $Q$ is a copy of $Z_{n-i+2}$, where $0$ is identified with the element $1$ in $Z_{n-i}$ and $\beta=i$ is identified with 2. So then \Cref{prop:zigzag 2-spectrum} computes the $\beta$-spectrum of $Q$, which has $k$th term
	\[
		\mu_k = (k-1)\ent{n-i}{k-2}.
	\]
	Plugging the $\lambda_j$ and $\mu_k$ into \Cref{prop:splitting} gives the result.
\end{proof}

Finally, we can sum terms to get the number of linear extensions of $Z_{n,i}$ in terms of Entringer numbers:

\begin{corollary}\label{cor:e(Z_{n,i})}
	\[
		e(Z_{n,i}) = \sum_{r=1}^{n+1}\sum_{j=\max\{1,r-(n-i+2)\}}^{\min\{i-1,r\}} \ent{i-2}{i-1-j} \binom{r-1}{j-1}\binom{n+1-r}{i-1-j}\sum_{k = r-j+1}^{n-i+2}(k-1)\ent{n-i}{k-2}.
	\]
	For $n \leq 10$, the values of $e(Z_{n,i})$ are shown in \Cref{tab:linear extensions}.
\end{corollary}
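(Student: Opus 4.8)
The plan is to recognize \Cref{cor:e(Z_{n,i})} as an immediate consequence of \Cref{prop:augmented zigzag spectrum} together with the elementary fact that summing an element's spectrum recovers the total number of linear extensions. Concretely, for any finite poset $P = (X, \preceq)$ and any fixed $\alpha \in X$, every linear extension of $P$ assigns $\alpha$ a unique rank in $\{1, \dots, |X|\}$, so the set of linear extensions is partitioned according to the rank of $\alpha$. Hence $e(P) = \sum_{r} \lambda_r$, where $(\lambda_1, \dots, \lambda_{|X|})$ is the $\alpha$-spectrum of $P$. This is the only conceptual ingredient.

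First I would record that the ground set of $Z_{n,i}$ is $\{0, 1, \dots, n\}$, which has $n+1$ elements, so the $\alpha$-spectrum (with $\alpha = i-1$, exactly as in \Cref{prop:augmented zigzag spectrum}) has $n+1$ entries, indexed by $r = 1, \dots, n+1$. Then I would substitute the closed form for the $r$th entry supplied by \Cref{prop:augmented zigzag spectrum} and sum over $r$ from $1$ to $n+1$; this reproduces verbatim the triple sum in the statement, with the outer index range $1 \le r \le n+1$ matching the size of the poset and the inner constraints $\max\{1, r-(n-i+2)\} \le j \le \min\{i-1,r\}$ being precisely those inherited from \Cref{prop:splitting}.

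The one genuine wrinkle is the hypothesis $1 < i$ carried by \Cref{prop:augmented zigzag spectrum}: the argument above establishes the formula only for $2 \le i \le n$. For $i = 1$ the stated triple sum is in fact empty (the inner $j$-range collapses since $\min\{i-1, r\} = 0$) and so evaluates to $0$ rather than the correct value, so this case must be treated separately. Here I would invoke the earlier observation that $Z_{n,1}$ is isomorphic to the zigzag poset $Z_{n+1}$, whence $e(Z_{n,1}) = E_{n+1}$; the symmetric case $i = n$ already satisfies $1 < i$ (for $n \ge 2$) and needs no special treatment, though it may be cross-checked against $e(Z_{n,n}) = E_{n+1}$ via the symmetry $Z_{n,i} \cong Z_{n,n+1-i}$.

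I do not expect any substantive obstacle, as this corollary is essentially a bookkeeping step. The only place demanding care is the index matching just described, and I would recommend confirming the resulting values against the small cases tabulated in \Cref{tab:linear extensions} as an independent sanity check; in particular, verifying that the $i = 1$ and $i = n$ columns return $E_{n+1}$ gives a clean consistency test of both the formula and the edge-case handling.
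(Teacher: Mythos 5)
Your proposal is correct and follows essentially the same route as the paper: the corollary is obtained by summing the $\alpha$-spectrum of $Z_{n,i}$ computed in \Cref{prop:augmented zigzag spectrum} over all ranks $r = 1, \dots, n+1$, using the fact that the spectrum partitions the linear extensions by the rank of $\alpha$. Your explicit treatment of the $i=1$ edge case (where the hypothesis $1<i$ fails, the triple sum is empty, and one must instead invoke $e(Z_{n,1}) = E_{n+1}$) is a careful refinement of a point the paper leaves implicit.
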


\begin{table}
	\centering
	\begin{tabular}{l@{\hskip 0.25in}rrrrrrrrrr}
		\multicolumn{11}{c}{$e(Z_{n,i})$} \\
		\toprule
	  \diagbox{$n$}{$i$} & 1 & 2 & 3 & 4 & 5 & 6 & 7 & 8 & 9 & 10 \\
	   \midrule
	 1 & 1 &  &  &  &  &  &  &  &  &  \\
	 2 & 2 & 2 &  &  &  &  &  &  &  &  \\
	 3 & 5 & 6 & 5 &  &  &  &  &  &  &  \\
	 4 & 16 & 18 & 18 & 16 &  &  &  &  &  &  \\
	 5 & 61 & 70 & 66 & 70 & 61 &  &  &  &  &  \\
	 6 & 272 & 310 & 298 & 298 & 310 & 272 &  &  &  &  \\
	 7 & 1385 & 1582 & 1511 & 1540 & 1511 & 1582 & 1385 &  &  &  \\
	 8 & 7936 & 9058 & 8670 & 8780 & 8780 & 8670 & 9058 & 7936 &  &  \\
	 9 & 50521 & 57678 & 55168 & 55986 & 55630 & 55986 & 55168 & 57678 & 50521 &  \\
	 10 & 353792 & 403878 & 386394 & 391846 & 390176 & 390176 & 391846 & 386394 & 403878 & 353792 \\
	 \bottomrule
	\end{tabular}
	\caption{The number of linear extensions of $Z_{n,i}$ for $1 \leq i \leq n \leq 10$. Up to index shifts (and dropping some leading terms), the first column is \seqnum{A000111}, the second column is \seqnum{A131281}, and the third column is \seqnum{A131611} (or \seqnum{A131656}) in OEIS~\cite{oeis}. The values in these three columns agree (as \Cref{prop:principal numbers} says they must) with the principal numbers of $A_{n+1}$, $D_{n+1}$, and $E_{n+1}$ computed by Sano~\cite{sanoPrincipalNumbersSaito2007}.}
	\label{tab:linear extensions}
\end{table}

\subsection{A Recursive Formula for $e(Z_{n,i})$} 
\label{sub:recursive formula}

As an alternative to \Cref{cor:e(Z_{n,i})}, we now construct a recursive formula for $e(Z_{n,i})$ using ideas of Saito~\cite{saitoPrincipal$Gamma$coneTree2007}.

\begin{definition}[Saito~\cite{saitoPrincipal$Gamma$coneTree2007}]\label{def:principal numbers}
	Let $\Gamma$ be a tree. Any orientation $\mathfrak{o}$ on $\Gamma$ determines a poset $P_{\mathfrak{o}}$ whose ground set consists of the vertices of $\Gamma$ and whose cover relations are determined by the orientation. The \emph{principal number} of $\Gamma$ is
	\[
		\sigma(\Gamma) := \max \{ e(P_{\mathfrak{o}}) : \mathfrak{o} \text{ is an orientation on } \Gamma \}.
	\]
\end{definition}

Saito proved that the orientations on a tree $\Gamma$ which maximize $e(P_{\mathfrak{o}})$ are the two bipartite (or principal) orientations, namely those with no directed path of length 2. In particular, one of the principal orientations on $\Gamma_{n,i}$ induces the poset $Z_{n,i}$, proving the following:

\begin{proposition}\label{prop:principal numbers}
	For $1 \leq i \leq n$,
	\[
		e(Z_{n,i}) = \sigma(\Gamma_{n,i}).
	\]
\end{proposition}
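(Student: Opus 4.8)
The plan is to reduce the whole statement to Saito's theorem, which tells us that for any tree $\Gamma$ the principal number $\sigma(\Gamma)$ is realized exactly by the bipartite orientations---those whose induced poset has no directed path of length~2. The excerpt has already recorded that $\Gamma_{n,i}$ is a tree and that one of its principal orientations induces $Z_{n,i}$, so the only thing that genuinely needs checking is that the orientation giving $Z_{n,i}$ really is bipartite. Concretely, I would restate this as the claim that $Z_{n,i}$ contains no three-element chain $a \prec b \prec c$ assembled from consecutive cover relations, equivalently that every element of $Z_{n,i}$ is either minimal or maximal.

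First I would read the local structure directly off \Cref{def:augmented zigzag}. The element $i$ lies above all of its neighbors, since $i \succeq i-1$, $i \succeq i+1$, and $0 \preceq i$, so $i$ is maximal. Moving outward from $i$ along either arm, the relations alternate $\succeq, \preceq, \succeq, \dots$, so $i-1$ and $i+1$ are each below both of their neighbors and hence minimal, $i-2$ and $i+2$ are each above both neighbors and hence maximal, and this pattern continues to the ends of the two arms. The appended vertex $0$ is comparable only to $i$, via $0 \preceq i$, so it is minimal. Thus every vertex is a local extremum, which is precisely the assertion that no cover relation composes with another, i.e., that there is no directed path of length~2.

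Granting the bipartite (principal) property, the conclusion is immediate: $Z_{n,i}$ is the poset $P_{\mathfrak{o}}$ of a principal orientation $\mathfrak{o}$ of $\Gamma_{n,i}$, so Saito's characterization gives $e(Z_{n,i}) = e(P_{\mathfrak{o}}) = \sigma(\Gamma_{n,i})$.

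I do not expect a real obstacle, since the substantive content---that bipartite orientations maximize the number of linear extensions---is Saito's theorem, which I would invoke as a black box. The one point that repays a moment's care is the branch vertex $i$, which for $1 < i < n$ has degree three: one must observe that all three incident relations ($0 \preceq i$, $i-1 \preceq i$, $i+1 \preceq i$) point the same way, so that $i$ is a single local maximum rather than the midpoint of a directed path. With that noted, the alternation propagates unambiguously out along both arms to the endpoints $1$ and $n$, whose single incident relations automatically make them local extrema, so the bipartite pattern is never broken.
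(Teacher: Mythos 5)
Your proposal is correct and follows essentially the same route as the paper, which likewise deduces the proposition directly from Saito's theorem that the bipartite (principal) orientations of a tree maximize the number of linear extensions, together with the observation that $Z_{n,i}$ is induced by such an orientation of $\Gamma_{n,i}$. Your explicit verification that every element of $Z_{n,i}$ is a local extremum (including the degree-three vertex $i$) is a detail the paper leaves implicit, but it is the same argument.
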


Saito gave a recursive expression for $\sigma(\Gamma)$ which was refined by Sano~\cite{sanoPrincipalNumbersSaito2007} as follows:

\begin{proposition}[{Sano~\cite[Corollary~2.2]{sanoPrincipalNumbersSaito2007}}]\label{prop:principal number recurrence}
	For a tree $\Gamma$ with vertex set $V$,
	\[
		\sigma(\Gamma) = \frac{1}{2} \sum_{v \in V} \left[ \prod_{k=1}^{c_v} \binom{|\Gamma|-1-\sum_{i=1}^{k-1} |\Gamma_v^{(i)}|}{|\Gamma_v^{(k)}|}\prod_{k=1}^{c_v} \sigma(\Gamma_v^{(k)})\right],
	\]
	where the $\Gamma_v^{(k)}$ with $k=1, \dots , c_v$ are the connected components of the graph $\Gamma \backslash\{v\}$.
\end{proposition}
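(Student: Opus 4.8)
The plan is to reduce the stated recurrence to a single clean deletion identity for linear extensions of height-two posets. By Saito's theorem (quoted just before \Cref{prop:principal numbers}), the maximum defining $\sigma(\Gamma)$ is attained at a principal (bipartite) orientation $\mathfrak{o}$, so I would fix such an $\mathfrak{o}$ and write $\sigma(\Gamma) = e(P_{\mathfrak{o}})$. The essential structural fact to record first is that $P_{\mathfrak{o}}$ has height two: its elements split into sources and sinks, two elements are comparable precisely when they are adjacent in $\Gamma$, and there are no relations beyond the covering edges. I would also note that restricting $\mathfrak{o}$ to any subtree again yields a principal orientation (deleting edges cannot create a directed path of length two), so for each component $\Gamma_v^{(k)}$ of $\Gamma \setminus \{v\}$ we have $\sigma(\Gamma_v^{(k)}) = e\bigl((P_{\mathfrak{o}})|_{\Gamma_v^{(k)}}\bigr)$.

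Next I would reinterpret the summand attached to a vertex $v$. The product of binomial coefficients telescopes to the multinomial coefficient $\binom{|\Gamma|-1}{|\Gamma_v^{(1)}|, \dots, |\Gamma_v^{(c_v)}|}$, and by the standard shuffle formula the number of linear extensions of a disjoint union of posets equals this multinomial coefficient times the product of the individual counts. Since comparability in $P_{\mathfrak{o}}$ is adjacency, deleting $v$ leaves no relations between distinct components, so the induced poset on $V \setminus \{v\}$ is exactly the disjoint union $\bigsqcup_k (P_{\mathfrak{o}})|_{\Gamma_v^{(k)}}$. Combining this with the previous paragraph identifies the $v$-summand as $e\bigl(P_{\mathfrak{o}} \setminus \{v\}\bigr)$, so the proposition becomes the assertion
\[
	e(P_{\mathfrak{o}}) = \tfrac{1}{2} \sum_{v \in V} e\bigl(P_{\mathfrak{o}} \setminus \{v\}\bigr).
\]

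The crux is this deletion identity, which I would prove by a direct bijection; it is special to height-two posets and fails for general posets (e.g.\ chains), so the height-two structure must be used honestly. Given a linear extension $L$ of $P_{\mathfrak{o}}$, listed from bottom to top, deleting its top element produces a linear extension of $P_{\mathfrak{o}} \setminus \{v\}$ for $v$ the maximum of $L$, and deleting its bottom element does the same for $v$ the minimum. I would show the resulting map from (linear extensions of $P_{\mathfrak{o}}$) $\times \{\text{top},\text{bottom}\}$ to pairs $(v, L')$ is a bijection. For surjectivity: if $v$ is a sink, appending $v$ to the top of $L'$ is a valid extension realizing $(v, L')$ via top-deletion, and if $v$ is a source, prepending it realizes $(v, L')$ via bottom-deletion; since $\Gamma$ is connected with at least two vertices, every $v$ is strictly a source or strictly a sink. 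For injectivity: an extension is determined by its restriction together with its extreme element, and a top-deletion and a bottom-deletion can only collide at a vertex that is simultaneously a source and a sink, i.e.\ an isolated vertex, which connectedness excludes. This double count gives $2\,e(P_{\mathfrak{o}}) = \sum_v e(P_{\mathfrak{o}} \setminus \{v\})$ and hence the proposition. The one place demanding care is exactly the exclusion of isolated vertices, which is why connectedness of $\Gamma$ (so that $P_{\mathfrak{o}}$ has no element that is both minimal and maximal) is essential.
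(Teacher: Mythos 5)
Your argument is correct. Note that the paper states this recurrence as a quoted result of Sano and gives no proof of its own, so there is nothing internal to compare against; your derivation is essentially the standard one underlying the Saito--Sano treatment. All three ingredients check out: (i) Saito's theorem lets you replace $\sigma(\Gamma)$ by $e(P_{\mathfrak{o}})$ for a fixed principal orientation, and since deleting edges cannot create a directed path of length two, the restriction of $\mathfrak{o}$ to each component $\Gamma_v^{(k)}$ is again principal, so the recursive terms $\sigma(\Gamma_v^{(k)})$ really are the linear-extension counts of the induced subposets; (ii) the telescoping product of binomial coefficients is the multinomial coefficient $\binom{|\Gamma|-1}{|\Gamma_v^{(1)}|,\dots,|\Gamma_v^{(c_v)}|}$ of the shuffle formula, and because comparability in a height-two poset is exactly adjacency in $\Gamma$, the elements of distinct components of $\Gamma\setminus\{v\}$ are incomparable, so the $v$-summand is precisely $e(P_{\mathfrak{o}}\setminus\{v\})$; (iii) the deletion identity $2\,e(P_{\mathfrak{o}})=\sum_{v}e(P_{\mathfrak{o}}\setminus\{v\})$ is your bijection, equivalently obtained by classifying linear extensions once by their top element (necessarily a sink) and once by their bottom element (necessarily a source) and adding, using that no vertex of a connected tree on at least two vertices is both. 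You correctly identify the exclusion of isolated vertices as the load-bearing hypothesis; the same observation shows the formula requires $|\Gamma|\geq 2$ (for a single vertex it returns $\tfrac{1}{2}$), a boundary case that is harmless for the paper's application to $\Gamma_{n,i}$.
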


Since we know from \Cref{prop:principal numbers} that $e(Z_{n,i}) = \sigma(\Gamma_{n,i})$, a straightforward but tedious calculation yields a recursive formula for $e(Z_{n,i})$:

\begin{corollary}\label{cor:linear extension recurrence}
	For $1 < i \leq n$,
	\begin{multline}\label{eq:linear extension recurrence}
		e(Z_{n,i}) = \frac{1}{2}\left[ E_n + e(Z_{n-1,i-1})+ \sum_{j=2}^{i-1} \binom{n}{j-1} E_{j-1}e(Z_{n-j,i-j}) + n \binom{n-1}{i-1}E_{i-1}E_{n-i} \right. \\ \left.+ \sum_{\ell=i+1}^{n}\binom{n}{\ell} e(Z_{\ell-1,i})E_{n-\ell}\right].
	\end{multline}
\end{corollary}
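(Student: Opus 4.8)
The plan is to combine \Cref{prop:principal numbers}, which identifies $e(Z_{n,i})$ with the principal number $\sigma(\Gamma_{n,i})$, with Sano's recurrence (\Cref{prop:principal number recurrence}) applied to the tree $\Gamma = \Gamma_{n,i}$. Recall that $\Gamma_{n,i}$ is the path $1 - 2 - \cdots - n$ with one additional leaf $0$ attached at the vertex $i$, so it has $n+1$ vertices and $|\Gamma| - 1 = n$. Sano's recurrence writes $\sigma(\Gamma)$ as $\frac12$ times a sum over vertices $v$, so the strategy is to compute the contribution of each $v \in \{0,1,\dots,n\}$ and check that these contributions are precisely the five terms inside the brackets of~\eqref{eq:linear extension recurrence}.

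Two general observations make each contribution easy to evaluate. First, since the components of $\Gamma \setminus \{v\}$ have sizes summing to $n = |\Gamma|-1$, the product of binomial coefficients in \Cref{prop:principal number recurrence} telescopes to the multinomial coefficient $\binom{n}{|\Gamma_v^{(1)}|,\dots,|\Gamma_v^{(c_v)}|}$; in particular it equals $\binom{n}{a}$ when there are two components of sizes $a$ and $n-a$, and $n\binom{n-1}{i-1}$ when $v=i$ splits $\Gamma$ into three pieces of sizes $1$, $i-1$, $n-i$. Second, each connected component of $\Gamma \setminus \{v\}$ is, as an unoriented tree, either a single vertex (with $\sigma = 1$), a path on $m$ vertices (with $\sigma = E_m$, recalling $e(Z_m) = E_m$), or a copy of a smaller augmented zigzag cover graph $\Gamma_{m,j}$ (with $\sigma = e(Z_{m,j})$ by reapplying \Cref{prop:principal numbers}).

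With these in hand I would partition the vertices into five classes and read off the contributions. Deleting the leaf $v=0$ leaves the path on $\{1,\dots,n\}$, contributing $E_n$. Deleting $v=1$ leaves the single component $\{0\}\cup\{2,\dots,n\}\cong\Gamma_{n-1,i-1}$, contributing $e(Z_{n-1,i-1})$. For $2 \le v=j \le i-1$, deletion splits off a path $\{1,\dots,j-1\}$ (giving $E_{j-1}$) and a piece $\{0\}\cup\{j+1,\dots,n\}\cong\Gamma_{n-j,i-j}$ (giving $e(Z_{n-j,i-j})$), so the total over this range is $\sum_{j=2}^{i-1}\binom{n}{j-1}E_{j-1}\,e(Z_{n-j,i-j})$. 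Deleting the central vertex $v=i$ yields the three pieces $\{0\}$, $\{1,\dots,i-1\}$, $\{i+1,\dots,n\}$, contributing $n\binom{n-1}{i-1}E_{i-1}E_{n-i}$. Finally, for $i+1 \le v=\ell \le n$, deletion splits off $\{0,1,\dots,\ell-1\}\cong\Gamma_{\ell-1,i}$ (giving $e(Z_{\ell-1,i})$) and a path $\{\ell+1,\dots,n\}$ (giving $E_{n-\ell}$), contributing $\sum_{\ell=i+1}^{n}\binom{n}{\ell}\,e(Z_{\ell-1,i})E_{n-\ell}$. Summing these five groups and multiplying by $\frac12$ reproduces~\eqref{eq:linear extension recurrence} exactly.

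The calculation is conceptually routine once set up, and the part requiring the most care is the component analysis: correctly identifying which deletions produce paths versus smaller augmented zigzags, handling the boundary cases $v=1$ and $v=i$ (where the number of components drops to one and rises to three, respectively) separately from the generic two-component case, and confirming that each induced orientation is again a principal one so that \Cref{prop:principal numbers} may legitimately be reapplied to the augmented-zigzag pieces. I expect the only genuine hazard to be the index bookkeeping in matching $\Gamma_{n-j,i-j}$ and $\Gamma_{\ell-1,i}$ to the correct smaller values $e(Z_{\cdot,\cdot})$.
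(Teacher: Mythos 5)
Your proposal is correct and is exactly the argument the paper intends: the paper's proof of \Cref{cor:linear extension recurrence} is precisely the ``straightforward but tedious calculation'' of applying Sano's recurrence (\Cref{prop:principal number recurrence}) to $\Gamma_{n,i}$ via \Cref{prop:principal numbers}, with the vertex-by-vertex component analysis and multinomial telescoping you describe. Your five vertex classes, component identifications, and coefficients all check out.
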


It is not clear what $Z_{m,i}$ should mean when $m < i$, but---following Sano's example~\cite{sanoPrincipalNumbersSaito2007}---it is possible to assign values of $e(Z_{m,i})$ for $m < i$ so that the recursive expression in \Cref{cor:linear extension recurrence} simplifies to
\begin{equation}\label{eq:simplified recurrence}
	e(Z_{n,i}) = \frac{1}{2}\sum_{\ell = 1}^n \binom{n}{\ell} e(Z_{\ell-1,i})E_{n-\ell}
\end{equation}
for $n \geq i$. Notice that the coefficient of $E_{n-i}$ in \eqref{eq:linear extension recurrence} is $n \binom{n-1}{i-1}E_{i-1} = \binom{n}{i}iE_{i-1}$, so we will automatically assign $e(Z_{i-1,i}) := i E_{i-1}$. Moreover, expanding the $e(Z_{n-j,i-j})$ terms in the first sum in \eqref{eq:linear extension recurrence}, we see that there is no $E_{n-(i-1)}$ term, so we will assign $e(Z_{i-2,i}) :=0$.

Therefore, (cf.~\cite{millarNewOperationSequences1996}), the exponential generating function $f_i$ of the sequence $e(Z_{0 , i}),e(Z_{1 , i}),\dots$ has the form
\begin{equation}\label{eq:egf ode}
	f_i'(x) = \frac{1}{2} f_i(x)(\sec x + \tan x) + p_i(x),
\end{equation}
where $p_i$ is a polynomial of degree $i-1$. Using $e(Z_{i-2,i}) =0$, $e(Z_{i-1,i}) = i E_{i-1}$, and $e(Z_{i+j,i}) = e(Z_{i+j,j+1})$ for $j=0, \dots , i-2$, we can deduce $i+1$ initial conditions
\begin{equation}\label{eq:egf ode ics}
	f_i^{(i-1)}(0) = 0, \quad f_i^{(i)}(0) = i E_{i-1}, \quad \text{and} \quad f_i^{(i+j+1)}(0) = e(Z_{i+j,j+1}) \text{ for }j=0, \dots , i-2.
\end{equation}
These are enough to allow us to uniquely solve \eqref{eq:egf ode} for the exponential generating function $f_i$ for any particular $i$, though we have not been able to find a general expression for $p_i(x)$.

\subsection{Generalized Entringer Numbers} 
\label{sub:generalized_entringer_numbers}

As our final excursion in pure combinatorics, we define generalized Entringer numbers and show that they satisfy an analog of \Cref{cor:entringer sum}. This will then provide the connection in \Cref{sec:chord lengths} between the expected values of the chord lengths $d_i$ and the $e(Z_{n,i})$.

\begin{definition}\label{def:generalized entringer}
	Let $1 \leq i \leq n+1$, $0 \leq k \leq n$, and define the \emph{generalized Entringer number} $\gent{n}{k}{i}$ as follows. If $i$ is odd, let $\gent{n}{k}{i}$ be the number of alternating permutations $\tau \in S_{n+1}$ with $\tau(i) = k+1$; if $i$ is even, let $\gent{n}{k}{i}$ be the number of reverse-alternating permutations $\tau \in S_{n+1}$ with $\tau(i) = k+1$.
\end{definition}

Note that $\gent{n}{k}{1} = \ent{n}{k}$, so this is a generalization of the Entringer numbers.

While the generalized Entringer numbers do not seem to be well-studied, so we don't know much about them, they certainly satisfy the analog of~\eqref{eq:Entringer sum}: for any $1 \leq i \leq n+1$,
\begin{equation}\label{eq:generalized Entringer sum}
	E_{n+1} = \sum_{k=0}^n \gent{n}{k}{i}.
\end{equation}

Our main reason for defining the generalized Entringer numbers is the following generalization of \Cref{cor:entringer sum}:

\begin{theorem}\label{thm:generalized entringer sum}
	\[
		\sum_{k=1}^n k \gent{n-1}{k-1}{i} = e(Z_{n,i}).
	\]
\end{theorem}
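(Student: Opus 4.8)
The plan is to give a bijective/combinatorial interpretation of the sum $\sum_{k=1}^n k\, \gent{n-1}{k-1}{i}$ and show directly that it counts linear extensions of $Z_{n,i}$. First I would recall that by \Cref{def:generalized entringer}, $\gent{n-1}{k-1}{i}$ counts the alternating (or reverse-alternating, depending on parity of $i$) permutations $\tau \in S_n$ with $\tau(i) = k$. The factor of $k$ strongly suggests a $k$-to-$1$ correspondence, exactly as in the proof of \Cref{prop:zigzag 2-spectrum}: given such a $\tau$ together with a choice of one of the $k$ values in $\{1,\dots,k\}$ smaller than or equal to $\tau(i)=k$, we should be able to build a linear extension of $Z_{n,i}$. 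So the left-hand side counts pairs (alternating permutation $\tau$ on $\{1,\dots,n\}$ with $\tau(i)=k$, a choice of value in $\{1,\dots,k\}$), and I want to match these with the $e(Z_{n,i})$ linear extensions of the poset on $\{0,1,\dots,n\}$.

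The cleanest route, however, is probably to avoid building a fresh bijection and instead reduce to the machinery already developed. I would observe that a linear extension of $Z_{n,i}$ is exactly an assignment of ranks to the ground set $\{0,1,\dots,n\}$ respecting $\preceq$; since $0 \preceq i$ and the rest of the poset (removing $0$) is a copy of the zigzag poset $Z_n$ (or its opposite, per the remark following \Cref{def:augmented zigzag}), a linear extension restricts to a linear extension of $Z_n$ on $\{1,\dots,n\}$ together with a valid insertion point for the element $0$. The constraint $0 \preceq i$ means $0$ must receive a rank below that of $i$; if $i$ has rank $k$ among $\{1,\dots,n\}$ in the restricted extension, then $0$ can be inserted in any of the $k$ positions at or before $i$'s slot. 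This is precisely where the factor $k$ comes from. Summing over all linear extensions of the zigzag poset (on $n$ elements) according to the rank $k$ of the distinguished element $i$, weighted by the number $k$ of legal insertions for $0$, gives $e(Z_{n,i})$.

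I would then identify the rank distribution of the element $i$ in the zigzag poset with the generalized Entringer numbers: the number of linear extensions of $Z_n$ (respectively $Z_n^{\text{op}}$) in which element $i$ receives rank $k$ is exactly the number of alternating (respectively reverse-alternating) permutations $\tau \in S_n$ with $\tau(i)=k$, which by definition is $\gent{n-1}{k-1}{i}$. The parity bookkeeping matches \Cref{def:generalized entringer}: when $i$ is even the induced poset is $Z_n$ (alternating) and when $i$ is odd it is $Z_n^{\text{op}}$ (reverse-alternating)—I would double-check this matches the stated parity convention, adjusting the correspondence by the reversal map $\tau \mapsto \overline\tau$ if needed, exactly as was done in \Cref{lem:zigzag 1-spectrum}. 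Combining these two observations yields $e(Z_{n,i}) = \sum_{k} k \cdot (\text{number of extensions of the zigzag poset with $i$ at rank $k$}) = \sum_{k=1}^n k\, \gent{n-1}{k-1}{i}$, which is the claim.

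The main obstacle I anticipate is the parity bookkeeping: correctly matching the orientation of the zigzag poset $Z_{n,i}\setminus\{0\}$ (which depends on the parity of $i$ and $n-i$) against the alternating/reverse-alternating convention baked into \Cref{def:generalized entringer}, and ensuring the element indexed $i$ in the poset corresponds to position $i$ in the permutation under the chosen identification of ranks with permutation values. A secondary subtlety is verifying that the insertion of $0$ at any of the $k$ lower ranks always yields a \emph{distinct} valid linear extension and that every linear extension of $Z_{n,i}$ arises exactly once this way—i.e., that the map (restricted extension, insertion slot) $\mapsto$ extension of $Z_{n,i}$ is a genuine bijection. This should follow immediately once the rank-insertion picture is set up cleanly, but it is the step that requires care to state precisely rather than hand-wave.
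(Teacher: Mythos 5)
Your proposal is correct and follows essentially the same route as the paper's proof: the paper defines the forgetful map $R:\sigma\mapsto\widetilde{\sigma}$ that deletes $0$ and relabels, observes it is surjective onto the linear extensions of the zigzag poset (or its opposite), and counts the fiber over $\widetilde{\sigma}$ as $\widetilde{\sigma}(i)$ choices for $\sigma(0)$ --- exactly your rank-and-insertion argument read in the other direction. The parity bookkeeping you flag does work out (linear extensions of $Z_n$ are reverse-alternating and those of $Z_n^{\text{op}}$ are alternating, matching the convention in the definition of $\gent{n}{k}{i}$), so no adjustment by the reversal map is needed.
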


\begin{proof}
	Suppose $\sigma$ is a linear extension of $Z_{n,i}$; that is, $\sigma: \{0,\dots , n\} \to \{0, \dots , n\}$ is an order-preserving bijection: $\sigma(i) \leq \sigma(j)$ if $i \preceq j$. If we remove 0 from the ground set of $Z_{n,i}$ and define $\widetilde{\sigma}:\{1, \dots , n\} \to \{1, \dots , n\}$ by
	\[
		\widetilde{\sigma}(i) = \begin{cases} \sigma(i)+1 & \text{ if } \sigma(i) < \sigma(0) \\ \sigma(i) & \text{ else,} \end{cases}
	\]
	then $\widetilde{\sigma}$ is a linear extension of the zigzag poset $Z_n$ (if $i$ is odd) or its opposite $Z_n^{\text{op}}$ (if $i$ is even).

	In other words, the mapping $R: \sigma \mapsto \widetilde{\sigma}$ gives a surjective map from the set of linear extensions of $Z_{n,i}$ to the set of linear extensions of either $Z_n$ or $Z_n^{\text{op}}$. Of course, $R$ is not injective; for example, if $n=2$ and $i=1$, both $\sigma_1: (0,1,2) \mapsto (0,2,1)$ and $\sigma_2: (0,1,2) \mapsto (1,2,0)$ map to the same alternating permutation $\widetilde{\sigma}: (1,2) \mapsto (2,1)$.

	So how many $\sigma$ are in the preimage of a given $\widetilde{\sigma}$? The only freedom is in the choice of $\sigma(0)$, and the only constraint is that $\sigma(0) < \sigma(i) = \widetilde{\sigma}(i)$, so there are $\widetilde{\sigma}(i)$ possible choices for $\sigma(0)$, namely $0, \dots , \widetilde{\sigma}(i) - 1$.

	Therefore, each of the $\gent{n-1}{k-1}{i}$ (alternating or reverse-alternating) permutations $\widetilde{\sigma}$ in the range of $R$ with $\widetilde{\sigma}(i) = k$ have $k$ preimages, so
	\[
		\sum_{k=1}^n k \gent{n-1}{k-1}{i}
	\]
	must equal the total number of elements in the domain of $R$, which is exactly $e(Z_{n,i})$.
\end{proof}

Unlike \Cref{cor:e(Z_{n,i}),cor:linear extension recurrence}, \Cref{thm:generalized entringer sum} does not seem to be of much practical use for computing $e(Z_{n,i})$, but it will be an essential piece of our proof of \Cref{thm:chord lengths}, which relates the expected values of the~$d_i$ to the $e(Z_{n,i})$.

Before we get to that, one last combinatorial observation: we can use the same proof as in \Cref{cor:expected first entry} to  show that \Cref{thm:generalized entringer sum} implies the following:

\begin{corollary}\label{cor:expected ith entry}
	If $i$ is odd, the average of $\tau(i)$ over all alternating permutations on $\{1,\dots, n\}$ is
	\[
		\mathbb{E}[\tau(i) \,|\, \tau \in AP_n] = \frac{e(Z_{n,i})}{E_n}.
	\]

	If $i$ is even, the average of $\tau(i)$ over all reverse alternating permutations on $\{1,\dots, n\}$ is
	\[
		\mathbb{E}[\tau(i) \,|\, \tau \in RP_n] = \frac{e(Z_{n,i})}{E_n}.
	\]
\end{corollary}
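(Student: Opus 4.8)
The plan is to mirror the proof of \Cref{cor:expected first entry} essentially verbatim, substituting the generalized Entringer numbers for the classical ones and \Cref{thm:generalized entringer sum} for \Cref{cor:entringer sum}. First I would record the only fact about the sample space that is needed: in both parity cases the number of permutations being averaged over is $E_n$, since $|AP_n| = |RP_n| = E_n$ by the definition of the Euler numbers.

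Next I would rewrite the sum defining the expectation by bucketing permutations according to the value $\tau(i) = v$. When $i$ is odd, the number of alternating permutations $\tau$ of $\{1,\dots,n\}$ with $\tau(i) = v$ is, after matching the indexing in \Cref{def:generalized entringer} (the definition parametrizes permutations of $\{1,\dots,n+1\}$, so the base set $S_{n+1}$ becomes $S_n$, the superscript index becomes $n-1$, and one writes $v = k+1$), exactly $\gent{n-1}{v-1}{i}$. Hence
\[
	\sum_{\tau \in AP_n} \tau(i) = \sum_{v=1}^n v\,\gent{n-1}{v-1}{i}.
\]
The parity convention built into \Cref{def:generalized entringer} is precisely what makes the odd case count alternating permutations and the even case count reverse-alternating ones, so the identical bucketing argument applies to $RP_n$ when $i$ is even, with the same generalized Entringer numbers on the right.

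The key step is then to recognize the right-hand side as the left-hand side of \Cref{thm:generalized entringer sum}: renaming the summation variable $v$ as $k$, the sum reads $\sum_{k=1}^n k\,\gent{n-1}{k-1}{i} = e(Z_{n,i})$. Dividing by $E_n = |AP_n|$ (respectively $|RP_n|$) yields $\mathbb{E}[\tau(i)] = \frac{e(Z_{n,i})}{E_n}$ in both cases, which is the claim.

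I do not expect any genuine obstacle here, since the combinatorial content is entirely packaged in \Cref{thm:generalized entringer sum}. The only things to watch are the index shift between \Cref{def:generalized entringer} and the statement at hand, and confirming that the odd/even dichotomy in the definition of $\gent{n}{k}{i}$ lines up exactly with $AP_n$ versus $RP_n$ so that the correct set of permutations is being counted in each case.
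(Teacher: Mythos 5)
Your proposal is correct and matches the paper's intended argument exactly: the paper states that \Cref{cor:expected ith entry} follows by ``the same proof as in \Cref{cor:expected first entry}'' applied to \Cref{thm:generalized entringer sum}, which is precisely the bucketing-by-$\tau(i)$ computation you carry out. The index shift and the odd/even alignment with $AP_n$ versus $RP_n$ are handled correctly.
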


Similarly, if $\tau$ is an alternating permutation, then its reverse $\overline{\tau}$ is reverse-alternating, so linearity of expectations implies
\[
	\mathbb{E}[\tau(i)\,|\, \tau \in AP_n] = n+1 - \mathbb{E}[\tau(i)\,|\, \tau \in RP_n],
\]
which proves:

\begin{corollary}\label{cor:expected ith entry pt 2}
	If $i$ is even, the average of $\tau(i)$ over all alternating permutations on $\{1,\dots, n\}$ is
	\[
		\mathbb{E}[\tau(i) \,|\, \tau \in AP_n] = n+1 - \frac{e(Z_{n,i})}{E_n}.
	\]

	If $i$ is odd, the average of $\tau(i)$ over all reverse alternating permutations on $\{1,\dots, n\}$ is
	\[
		\mathbb{E}[\tau(i) \,|\, \tau \in RP_n] = n+1 - \frac{e(Z_{n,i})}{E_n}.
	\]
\end{corollary}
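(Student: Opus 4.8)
The plan is to deduce the corollary directly from \Cref{cor:expected ith entry} by exploiting the order-reversing involution on permutations. The key object is the reverse map $\overline{\tau}(j) := n+1-\tau(j)$, which sends a permutation in $S_n$ to another permutation in $S_n$ and flips the direction of every comparison between consecutive values; consequently it restricts to a bijection between $AP_n$ and $RP_n$ (and, being an involution, also from $RP_n$ onto $AP_n$).

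First I would record what this involution says about expectations. Since $\overline{\tau}$ ranges over $RP_n$ exactly as $\tau$ ranges over $AP_n$, and since $\tau(i) = n+1 - \overline{\tau}(i)$, linearity of expectation gives
\[
	\mathbb{E}[\tau(i) \,|\, \tau \in AP_n] = n+1 - \mathbb{E}[\overline{\tau}(i) \,|\, \overline{\tau} \in RP_n] = n+1 - \mathbb{E}[\tau(i) \,|\, \tau \in RP_n],
\]
which is precisely the identity displayed just before the statement. The symmetric identity, with the roles of $AP_n$ and $RP_n$ interchanged, follows in exactly the same way because the reverse map is an involution.

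Next I would substitute the appropriate case of \Cref{cor:expected ith entry}. For the first assertion, $i$ is even, so the $i$-even case of \Cref{cor:expected ith entry} supplies $\mathbb{E}[\tau(i) \,|\, \tau \in RP_n] = e(Z_{n,i})/E_n$; feeding this into the identity above yields $\mathbb{E}[\tau(i) \,|\, \tau \in AP_n] = n+1 - e(Z_{n,i})/E_n$. For the second assertion, $i$ is odd, so the $i$-odd case of \Cref{cor:expected ith entry} supplies $\mathbb{E}[\tau(i) \,|\, \tau \in AP_n] = e(Z_{n,i})/E_n$, and the swapped identity gives $\mathbb{E}[\tau(i) \,|\, \tau \in RP_n] = n+1 - e(Z_{n,i})/E_n$.

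There is no genuine obstacle here; the only point demanding care is the bookkeeping of parity. \Cref{cor:expected ith entry} expresses the average as $e(Z_{n,i})/E_n$ over $AP_n$ when $i$ is odd and over $RP_n$ when $i$ is even, so the reverse map must be oriented so that the ensemble whose average is already known is the one we transfer \emph{to}. Verifying that the two parity cases in \Cref{cor:expected ith entry,cor:expected ith entry pt 2} are complementary in precisely this fashion is the entire substance of the argument.
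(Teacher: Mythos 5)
Your proposal is correct and is essentially identical to the paper's argument: the paper likewise applies the reverse involution $\overline{\tau}(i) = n+1-\tau(i)$, which exchanges $AP_n$ and $RP_n$, to obtain $\mathbb{E}[\tau(i)\,|\,\tau\in AP_n] = n+1-\mathbb{E}[\tau(i)\,|\,\tau\in RP_n]$ and then substitutes the appropriate parity case of \Cref{cor:expected ith entry}. Your parity bookkeeping matches the paper's exactly.
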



\section{Expected Chord Lengths}
\label{sec:chord lengths}

We return to confined polygons and consider the question: what is the expected value of $d_i = |v_{i+2} - v_1|$ as a function on the space $\Polhat(n;1)$ of equilateral polygons in rooted spherical confinement of radius 1?

Since the vector $(d_1, \dots , d_{n-3})$ is distributed according to (normalized) Lebesgue measure on $\mathcal{P}_n(1)$,
\[
	\mathbb{E}[(d_1, \dots , d_{n-3})] = (\mathbb{E}[d_1], \dots , \mathbb{E}[d_{n-3}])
\]
is exactly the center of mass of $\mathcal{P}_n(1)$. If we want to compute the center of mass of $\mathcal{P}_n(1)$, it is enough to compute the center of mass $\overline{x}$ of the order polytope $\mathcal{O}_{n-3} = \varphi(\mathcal{P}_n(1))$, since then
\[
	\mathbb{E}[(d_1, \dots , d_{n-3})] = \varphi(\overline{x})
\]
(recall that $\varphi$ is an involution, so $\varphi = \varphi^{-1}$). The map $\varphi$ fixes coordinates of odd index, so we record the following observation:

\begin{lemma}\label{lem:com odd coords}
	For $i$ odd,
	\[
		\mathbb{E}[d_i] = \overline{x}_i.
	\]
\end{lemma}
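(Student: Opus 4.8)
The plan is to leverage the affine-covariance of the center of mass under the involution $\varphi$, which is essentially the content of the identity $\mathbb{E}[(d_1, \dots , d_{n-3})] = \varphi(\overline{x})$ already recorded just before the lemma. First I would recall the explicit form $\varphi: (d_1, \dots , d_{n-3}) \mapsto (d_1, 1-d_2, d_3, 1-d_4, \dots)$, emphasizing the single structural fact I actually need: $\varphi$ acts as the identity on every odd-indexed coordinate and as the reflection $t \mapsto 1-t$ on every even-indexed coordinate.

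Next I would spell out why the covariance identity holds, since the whole lemma rests on it. Because $\varphi$ is affine with determinant $\pm 1$, it is volume-preserving and carries normalized Lebesgue measure on $\mathcal{P}_n(1)$ to normalized Lebesgue measure on $\mathcal{O}_{n-3} = \varphi(\mathcal{P}_n(1))$. The center of mass of a uniform distribution transforms equivariantly under affine maps (the barycenter of $T(K)$ is $T$ applied to the barycenter of $K$), so $\overline{x} = \varphi\big(\mathbb{E}[(d_1, \dots , d_{n-3})]\big)$. Since $\varphi$ is an involution, applying it to both sides gives $\mathbb{E}[(d_1, \dots , d_{n-3})] = \varphi(\overline{x})$.

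Finally I would simply read off the odd-indexed coordinates. For $i$ odd, the $i$th coordinate of $\varphi(\overline{x})$ is $\overline{x}_i$, because $\varphi$ fixes that coordinate, and hence $\mathbb{E}[d_i] = \overline{x}_i$. (For completeness one can note that the identical computation gives $\mathbb{E}[d_i] = 1 - \overline{x}_i$ when $i$ is even, which is presumably the companion statement used later.)

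There is no substantive obstacle here: the lemma is a coordinate-bookkeeping consequence of an identity already in hand, and the only point deserving a sentence of care is the affine-equivariance of the barycenter, which is standard. The real work is deferred to computing $\overline{x}$ itself, which is exactly what the generalized Entringer machinery and \Cref{thm:generalized entringer sum} are being set up to accomplish; this lemma just isolates the clean case where $\varphi$ contributes nothing, so that $\mathbb{E}[d_i]$ equals the barycenter coordinate of the order polytope directly.
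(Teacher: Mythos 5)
Your proof is correct and matches the paper's treatment: the paper does not give a formal proof but simply records the lemma as an observation following from the identity $\mathbb{E}[(d_1,\dots,d_{n-3})]=\varphi(\overline{x})$ together with the fact that $\varphi$ fixes odd-indexed coordinates, which is exactly your argument (you merely spell out the standard affine-equivariance of the barycenter that the paper leaves implicit). Your parenthetical remark that $\mathbb{E}[d_i]=1-\overline{x}_i$ for even $i$ is also true, though the paper prefers to phrase the even case via the center of mass of the polytope $\mathcal{U}_{n-3}$ in \Cref{lem:com even coords}.
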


Similarly, the map $\varphi \circ \psi$ fixes coordinates of even index. This map sends $\mathcal{P}_n(1)$ to the polytope
\[
	\mathcal{U}_{n-3} := \{(y_1, \dots , y_{n-3}) \in [0,1]^{n-3} : y_1 \leq y_2 \geq y_3 \leq y_4 \geq \dots \},
\]
which is the order polytope of $Z_{n-3}^{\text{op}}$, the opposite of the zigzag poset. Analogously to $\mathcal{U}_{n-3}$, any map $\sigma: \mathcal{U}_{n-3} \to \{1, \dots , n-3\}$ is order-preserving if and only if the permutation $i \mapsto \sigma(y_i)$ is reverse-alternating, and hence $\mathcal{U}_{n-3}$ decomposes into orthoschemes
\begin{equation}\label{eq:orthoscheme2}
	\Delta_\tau = \{(y_1, \dots , y_{n-3}) \in [0,1]^{n-3}: y_{\tau^{-1}(1)} \leq y_{\tau^{-1}(2)} \leq \dots \leq y_{\tau^{-1}(n-3)} \} \subset \mathcal{U}_{n-3}
\end{equation}
indexed by reverse-alternating permutations.

Let $\overline{y}$ be the center of mass of $\mathcal{U}_{n-3}$. Analogously to \Cref{lem:com odd coords} we have
\begin{lemma}\label{lem:com even coords}
	For $i$ even,
	\[
		\mathbb{E}[d_i] = \overline{y}_i.
	\]
\end{lemma}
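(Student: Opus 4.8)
The plan is to mirror the argument behind \Cref{lem:com odd coords} verbatim, replacing the involution $\varphi$ by the composition $\varphi \circ \psi$ and the order polytope $\mathcal{O}_{n-3}$ by $\mathcal{U}_{n-3}$. The structural facts we need are already recorded just above the statement: $\varphi \circ \psi$ is an affine map that fixes every even-indexed coordinate and sends $\mathcal{P}_n(1)$ onto $\mathcal{U}_{n-3}$. The only additional observation is that its linear part is a signed coordinate permutation — on each coordinate it is either the identity (even indices) or $t \mapsto 1-t$ (odd indices) — so its Jacobian determinant is $\pm 1$. Hence $\varphi \circ \psi$ is volume-preserving and carries normalized Lebesgue measure on $\mathcal{P}_n(1)$ to normalized Lebesgue measure on $\mathcal{U}_{n-3}$.

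First I would use the fact that expectation commutes with affine maps: pushing the uniform distribution on $\mathcal{P}_n(1)$ through $\varphi \circ \psi$ sends the center of mass of $\mathcal{P}_n(1)$ to the center of mass $\overline{y}$ of $\mathcal{U}_{n-3}$, i.e.
\[
	\overline{y} = (\varphi \circ \psi)\bigl(\mathbb{E}[(d_1, \dots , d_{n-3})]\bigr).
\]
Next I would verify that $\varphi \circ \psi$ is an involution. Since both $\varphi$ and $\psi$ act coordinatewise by either the identity or $t \mapsto 1-t$, a short check shows that $(\varphi \circ \psi)^2$ applies $t \mapsto 1-t$ twice on each odd coordinate (returning it to its original value) and leaves each even coordinate untouched, so $(\varphi \circ \psi)^2 = \mathrm{id}$. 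Applying $\varphi \circ \psi$ to both sides of the displayed equation then gives
\[
	\mathbb{E}[(d_1, \dots , d_{n-3})] = (\varphi \circ \psi)(\overline{y}).
\]

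Finally, since $\varphi \circ \psi$ fixes every coordinate of even index, reading off the $i$th coordinate for $i$ even yields $\mathbb{E}[d_i] = \overline{y}_i$, which is the claim.

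As for the main obstacle: there is no substantive one — this is a bookkeeping lemma, a direct analogue of \Cref{lem:com odd coords}. The only points to get right are the volume-preservation and the involutivity of $\varphi \circ \psi$, both of which are immediate from its coordinatewise description, and making sure it is $\varphi \circ \psi$ (rather than the wrong composition) that simultaneously fixes even coordinates and lands in the order polytope $\mathcal{U}_{n-3}$ of $Z_{n-3}^{\mathrm{op}}$; once these are pinned down, the center-of-mass transfer is automatic.
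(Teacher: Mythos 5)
Your proposal is correct and matches the paper's reasoning exactly: the paper likewise derives this lemma ``analogously to \Cref{lem:com odd coords}'' by noting that $\varphi \circ \psi$ is a volume-preserving affine involution carrying $\mathcal{P}_n(1)$ onto $\mathcal{U}_{n-3}$ while fixing the even-indexed coordinates, so the even coordinates of the center of mass transfer directly. Your added verifications (Jacobian $\pm 1$, involutivity, and that it is $\varphi\circ\psi$ rather than another composition that fixes even coordinates) are exactly the bookkeeping the paper leaves implicit.
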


These interpretations are not obviously helpful, since computing the centroid of a convex polytope is $\#P$-hard, even when the polytope is an order polytope~\cite{rademacherApproximatingCentroidHard2007}.
However, we have unimodular triangulations of $\mathcal{O}_{n-3}$ and $\mathcal{U}_{n-3}$ into the orthoschemes $\Delta_\tau$ defined in \eqref{eq:orthoscheme definition} and \eqref{eq:orthoscheme2}, so the center of mass of $\mathcal{O}_{n-3}$ is the mean of the centers of masses of the orthoschemes:
\begin{equation}\label{eq:com1}
	\overline{x} = \frac{1}{E_{n-3}}\; \smashoperator[r]{\sum_{\tau \in AP_{n-3}}}\; m(\Delta_\tau),
\end{equation}
where $m(\Delta_\tau)$ is the center of mass of $\Delta_\tau$, and similarly
\begin{equation}\label{eq:com2}
	\overline{y} = \frac{1}{E_{n-3}}\; \smashoperator[r]{\sum\limits_{\tau \in RP_{n-3}}}\; m(\Delta_\tau).
\end{equation}

In turn, the center of mass of a simplex is simply the center of mass of its vertices (see Krantz, McCarthy, and Parks~\cite{krantzGeometricCharacterizationsCentroids2006} for much more on centroids of simplices), so
\begin{equation}\label{eq:orthoscheme com}
	m(\Delta_\tau) = \frac{1}{n-2}\; \smashoperator[r]{\sum_{v \text{ a vertex of } \Delta_\tau}}\; v.
\end{equation}

A vertex $v = (v_1, \dots , v_{n-3})$ of an orthoscheme $\Delta_\tau$ is determined by assignments of 0s and 1s to the coordinates that satisfy the system of inequalities
\[
	v_{\tau^{-1}(1)} \leq v_{\tau^{-1}(2)} \leq \dots \leq v_{\tau^{-1}(n-3)}.
\]
Clearly, then, only one vertex (namely $(1,\dots , 1)$) can have $v_{\tau^{-1}(1)} = 1$, only two can have $v_{\tau^{-1}(2)} = 1$, etc. In other words:

\begin{lemma}\label{lem:orthoscheme vertices}
	Let $\tau \in S_{n-3}$ be a permutation, and let $\Delta_\tau$ be the corresponding orthoscheme. Then $\tau(i)$ of the vertices of $\Delta_\tau$ have $i$th coordinate equal to 1.
\end{lemma}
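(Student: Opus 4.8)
The plan is to make the discussion immediately preceding the statement fully precise—that is, to characterize the vertices of $\Delta_\tau$ explicitly—and then read off the count by a direct bookkeeping argument. Write $m := n-3$ for brevity.

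First I would characterize the vertices. The excerpt already observes that every vertex of $\Delta_\tau$ is a $0/1$ vector, so the task is to decide which such vectors lie in $\Delta_\tau$. A point $v \in \{0,1\}^m$ satisfies $v_{\tau^{-1}(1)} \leq v_{\tau^{-1}(2)} \leq \dots \leq v_{\tau^{-1}(m)}$ exactly when, read in the order $\tau^{-1}(1), \dots, \tau^{-1}(m)$, its coordinates form a run of $0$s followed by a run of $1$s. Hence the vertices are parameterized by a single threshold $t \in \{0, \dots, m\}$: define $v^{(t)}$ by setting $v^{(t)}_{\tau^{-1}(j)} = 0$ for $j \leq t$ and $v^{(t)}_{\tau^{-1}(j)} = 1$ for $j > t$. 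This produces the expected $m+1$ vertices of the $m$-dimensional simplex $\Delta_\tau$.

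Next I would do the count. Fix a coordinate $i$ and set $j := \tau(i)$, so that $i = \tau^{-1}(j)$ occupies position $j$ in the defining chain. By construction $v^{(t)}_i = v^{(t)}_{\tau^{-1}(j)}$ equals $1$ precisely when $j > t$, i.e.\ for $t \in \{0, \dots, j-1\}$. There are exactly $j = \tau(i)$ such thresholds, so exactly $\tau(i)$ of the vertices $v^{(t)}$ have $i$th coordinate equal to $1$, which is the claim.

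I do not anticipate any genuine obstacle here: once the vertices are correctly indexed by a threshold, the lemma collapses to a one-line count, and the only thing to watch is the interplay between $\tau$ and $\tau^{-1}$ when translating between a coordinate index $i$ and its position $\tau(i)$ in the chain. As a consistency check, summing over all coordinates gives $\sum_{i=1}^m \tau(i) = \binom{m+1}{2}$, which matches the total number of $1$-entries across all vertices, $\sum_{t=0}^m (m-t) = \binom{m+1}{2}$.
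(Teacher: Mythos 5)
Your proof is correct and is essentially the paper's own argument made precise: the paper likewise observes that the vertices are the $0/1$ vectors nondecreasing along the chain $\tau^{-1}(1),\dots,\tau^{-1}(n-3)$, so that exactly $j$ vertices have a $1$ in the $j$th position of the chain. Your explicit threshold parameterization $v^{(t)}$ and the final consistency check are welcome additions but do not change the substance.
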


Combining this lemma with \eqref{eq:com1} and \eqref{eq:orthoscheme com} yields
\[
	\overline{x}_i = \frac{1}{(n-2)E_{n-3}} \; \smashoperator[r]{\sum_{\tau \in AP_{n-3}}}\; \tau(i).
\]
In turn, if $i$ is odd then there are $\gent{n-4}{k-1}{i}$ alternating permutations with $\tau(i) = k$, so we can re-write the above sum as
\[
	\overline{x}_i = \frac{1}{(n-2)E_{n-3}} \sum_{k=1}^{n-3} k \gent{n-4}{k-1}{i} = \frac{e(Z_{n-3,i})}{(n-2)E_{n-3}}
\]
by \Cref{thm:generalized entringer sum}. Similarly, when $i$ is even we have
\[
	\overline{y}_i = \frac{1}{(n-2)E_{n-3}} \sum_{k=1}^{n-3} k \gent{n-4}{k-1}{i} = \frac{e(Z_{n-3,i})}{(n-2)E_{n-3}}.
\]

Combined with \Cref{lem:com odd coords,lem:com even coords}, this tells us the expected values of the $d_i$:

\begin{theorem}\label{thm:chord lengths}
	The expected value of the $i$th chordlength $d_i$ of a random confined equilateral polygon is
	\[
		\mathbb{E}[d_i] = \frac{e(Z_{n-3,i})}{(n-2)E_{n-3}}.
	\]
	Note that the denominator is \seqnum{A065619} in OEIS~\cite{oeis}.
\end{theorem}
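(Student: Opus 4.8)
The plan is to assemble the pieces already established into a single computation of each coordinate of the centroid of $\mathcal{P}_n(1)$, treating odd and even indices $i$ separately. Since $(d_1,\dots,d_{n-3})$ is uniformly distributed on $\mathcal{P}_n(1)$, the vector $\mathbb{E}[(d_1,\dots,d_{n-3})]$ is exactly the centroid of that polytope, so the task reduces to computing its $i$th coordinate. By \Cref{lem:com odd coords} this equals $\overline{x}_i$, the centroid of the order polytope $\mathcal{O}_{n-3}$, when $i$ is odd; and by \Cref{lem:com even coords} it equals $\overline{y}_i$, the centroid of $\mathcal{U}_{n-3}$, when $i$ is even. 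This parity split is forced by the fact that the volume-preserving maps $\varphi$ and $\varphi\circ\psi$ fix odd- and even-indexed coordinates respectively, so each parity is naturally handled by the polytope on which the relevant coordinate is left unchanged.

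Next I would exploit the unimodular triangulations into orthoschemes. Using \eqref{eq:com1} (respectively \eqref{eq:com2}), the centroid $\overline{x}$ (respectively $\overline{y}$) is the average over $\tau\in AP_{n-3}$ (respectively $\tau\in RP_{n-3}$) of the orthoscheme centroids $m(\Delta_\tau)$, and by \eqref{eq:orthoscheme com} each $m(\Delta_\tau)$ is the average of the $n-2$ vertices of $\Delta_\tau$. The key geometric input is \Cref{lem:orthoscheme vertices}: exactly $\tau(i)$ of those vertices have $i$th coordinate equal to $1$ (the rest being $0$), so the $i$th coordinate of $m(\Delta_\tau)$ is $\tau(i)/(n-2)$. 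Averaging over all relevant permutations then gives, for odd $i$,
\[
	\overline{x}_i = \frac{1}{(n-2)E_{n-3}}\sum_{\tau\in AP_{n-3}}\tau(i),
\]
and the analogous expression for $\overline{y}_i$ over $RP_{n-3}$ when $i$ is even.

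To finish I would reorganize these sums by the value $k=\tau(i)$. By the very definition of the generalized Entringer numbers, for odd $i$ there are exactly $\gent{n-4}{k-1}{i}$ alternating permutations in $S_{n-3}$ with $\tau(i)=k$, and for even $i$ the reverse-alternating permutations with $\tau(i)=k$ again number $\gent{n-4}{k-1}{i}$, since the definition absorbs the parity. Hence in both cases
\[
	\sum_{k=1}^{n-3} k\,\gent{n-4}{k-1}{i} = e(Z_{n-3,i})
\]
by \Cref{thm:generalized entringer sum}, and dividing by $(n-2)E_{n-3}$ yields the claimed formula.

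Since every ingredient is already in place, the proof is essentially an assembly and I do not expect a genuine obstacle. The one point requiring care is the parity bookkeeping: one must match odd coordinates to $\mathcal{O}_{n-3}$ and to alternating permutations, and even coordinates to $\mathcal{U}_{n-3}$ and to reverse-alternating permutations, so that the permutation class appearing in the centroid sum agrees exactly with the class counted by $\gent{n-4}{k-1}{i}$ in \Cref{def:generalized entringer}. It is precisely this alignment, baked into the definition of the generalized Entringer numbers, that lets the single identity of \Cref{thm:generalized entringer sum} cover both parities uniformly.
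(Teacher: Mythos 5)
Your proposal is correct and follows the paper's own argument essentially step for step: the reduction to the centroid of $\mathcal{P}_n(1)$, the parity split via \Cref{lem:com odd coords,lem:com even coords}, the orthoscheme triangulation with \Cref{lem:orthoscheme vertices}, and the final regrouping by $k=\tau(i)$ using \Cref{def:generalized entringer} and \Cref{thm:generalized entringer sum}. The index bookkeeping ($\gent{n-4}{k-1}{i}$ counting permutations in $S_{n-3}$ with $\tau(i)=k$) is also handled exactly as in the paper.
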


For example, we know that $e(Z_{n-3,1}) = e(Z_{n-3,n-3}) = E_{n-2}$, so we see that average first and last chord lengths are
\[
	\mathbb{E}[d_1] = \mathbb{E}[d_{n-3}] = \frac{E_{n-2}}{(n-2)E_{n-3}} \sim \frac{2}{\pi} \approx 0.6366
\]
using the asymptotic expansion~\eqref{eq:euler asymptotics}.

More generally, we can use Sano's exponential generating functions for $e(Z_{n,1})$, $e(Z_{n,2})$, and $e(Z_{n,3})$~\cite{sanoPrincipalNumbersSaito2007} and straightforward generalizations to slightly bigger $i$ given by using \Cref{cor:linear extension recurrence} or solving \eqref{eq:egf ode} and \eqref{eq:egf ode ics} to prove the following asymptotics for a few of the $\mathbb{E}[d_i]$:

\begin{corollary}\label{prop:chordlength asymptotics}
	The first few chord lengths have the following asymptotics as the number of edges $n \to \infty$:
	\begin{align*}
		\mathbb{E}[d_1] & \sim \frac{2}{\pi} \approx 0.6366 \\
		\mathbb{E}[d_2] & \sim 2-\frac{4}{\pi} \approx 0.7267 \\
		\mathbb{E}[d_3] & \sim \frac{\pi}{4} - 2 + \frac{6}{\pi} \approx 0.6952 \\
		\mathbb{E}[d_4] & \sim \frac{\pi^2}{12} - \frac{\pi}{2} + 4 - \frac{8}{\pi} \approx 0.7051 \\
		\mathbb{E}[d_5] & \sim \frac{5\pi^3}{192} - \frac{\pi^2}{6} + \frac{3\pi}{4} - 4 + \frac{10}{\pi} \approx 0.7018 \\
		\mathbb{E}[d_6] & \sim \frac{\pi^4}{120} - \frac{5\pi^3}{96} + \frac{\pi^2}{4} - \pi + 6 - \frac{12}{\pi} \approx 0.7029 \\
		\mathbb{E}[d_7] & \sim \frac{61 \pi^5}{23040} - \frac{\pi^4}{60} + \frac{5\pi^3}{64} - \frac{\pi^2}{3} + \frac{5\pi}{4} - 6 + \frac{14}{\pi} \approx 0.7025.
	\end{align*}
\end{corollary}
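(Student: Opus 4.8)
The plan is to read off each limit from the singularity structure of the exponential generating function $f_i(x)=\sum_{m\ge 0}e(Z_{m,i})\,x^m/m!$. By \Cref{thm:chord lengths}, writing $m=n-3$ we have $\mathbb{E}[d_i]=e(Z_{m,i})/((m+1)E_m)$, so the corollary amounts to computing $\lim_{m\to\infty}e(Z_{m,i})/((m+1)E_m)$ for each fixed $i$. Since \eqref{eq:euler asymptotics} gives $E_m/m!\sim 2(2/\pi)^{m+1}$, everything reduces to the growth rate of the coefficients $e(Z_{m,i})/m!=[x^m]f_i(x)$, which is governed by the dominant singularity of $f_i$.

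First I would obtain $f_i$ in closed form. For $i=1,2,3$ these are Sano's generating functions, and for $4\le i\le 7$ I would solve the linear ODE \eqref{eq:egf ode} together with the $i+1$ initial conditions \eqref{eq:egf ode ics} (equivalently, iterate \Cref{cor:linear extension recurrence}), which pins down $f_i$ uniquely. Solving the ODE by an integrating factor writes $f_i(x)=B_i(x)/(1-\sin x)$ for an entire function $B_i$, whose numerator is the integral of $(1-\sin t)\,p_i(t)$ plus the constant fixed by the initial data. As a sanity check, $i=1$ gives $p_1\equiv 0$ and the familiar $f_1(x)=1/(1-\sin x)=(\sec x+\tan x)'$, consistent with $e(Z_{m,1})=E_{m+1}$.

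Next comes the singularity analysis. The nearest singularity of $f_i$ to the origin is at $x=\pi/2$, where $1-\sin x=\tfrac12(\pi/2-x)^2+O((\pi/2-x)^4)$ has a double zero; all other poles sit at $\pi/2+2\pi k$ and are strictly farther out, while $B_i$ contributes no singularities. Hence $f_i$ has a double pole at $\pi/2$, and expanding $B_i$ about $\pi/2$ gives a principal part whose leading coefficient is $a_i:=2B_i(\pi/2)$. The transfer principle then yields $[x^m]f_i\sim a_i(m+1)(2/\pi)^{m+2}$, i.e. $e(Z_{m,i})/m!\sim a_i(m+1)(2/\pi)^{m+2}$. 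Dividing by $(m+1)E_m/m!\sim 2(m+1)(2/\pi)^{m+1}$, the polynomial and exponential factors cancel and $\mathbb{E}[d_i]\to a_i/\pi$. For $i=1$ this gives $a_1=2$ and $\mathbb{E}[d_1]\sim 2/\pi$, matching the direct evaluation of $E_{m+1}/((m+1)E_m)$.

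Finally, for each $i=1,\dots,7$ I would compute $a_i=2B_i(\pi/2)$ explicitly, that is, determine the integration constant $C_i$ and evaluate $\int_0^{\pi/2}(1-\sin t)\,p_i(t)\,dt$; the powers of $\pi$ in the stated limits arise exactly from these evaluations at $\pi/2$. The main obstacle is that, as noted after \eqref{eq:egf ode ics}, there is no known closed form for $p_i$, so the computation cannot be done uniformly in $i$ and must instead be carried out case by case, with the determination of the integration constant from the $i+1$ initial conditions being the delicate bookkeeping step. Each individual case is nonetheless a finite, routine calculation, and assembling the seven of them produces the claimed asymptotics.
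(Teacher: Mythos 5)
Your overall strategy matches the paper's (which is only sketched there): combine \Cref{thm:chord lengths} with the Euler-number asymptotics \eqref{eq:euler asymptotics}, and extract the growth of $e(Z_{m,i})$ from the generating functions obtained from Sano for $i\le 3$ and from \eqref{eq:egf ode}--\eqref{eq:egf ode ics} for larger $i$; your transfer step (a double pole at $\pi/2$ with leading coefficient $a_i$ gives $[x^m]\sim a_i(m+1)(2/\pi)^{m+2}$, hence $\mathbb{E}[d_i]\to a_i/\pi$) is also sound. But there is a concrete error in how you solve the ODE, and it would corrupt the constants $a_i$, which are the entire content of the corollary. First, the paper's $f_i$ is not the ordinary exponential generating function $\sum_m e(Z_{m,i})x^m/m!$: the initial conditions \eqref{eq:egf ode ics}, together with the assignments $e(Z_{i-2,i})=0$ and $e(Z_{i-1,i})=iE_{i-1}$, force $f_i^{(k)}(0)=e(Z_{k-1,i})$, so $f_i$ is the once-integrated EGF and the series you actually need is $f_i'$. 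Second, because of the factor $\tfrac12$ in \eqref{eq:egf ode}, the integrating factor is $\exp\bigl(-\tfrac12\int(\sec x+\tan x)\,dx\bigr)=(1-\sin x)^{1/2}=\cos(x/2)-\sin(x/2)$, not $1-\sin x$, so the solution is
\[
 f_i(x)=\frac{1}{\cos(x/2)-\sin(x/2)}\left[C_i+\int_0^x\bigl(\cos(t/2)-\sin(t/2)\bigr)\,p_i(t)\,dt\right],
\]
which has a \emph{simple} pole at $x=\pi/2$, not the double pole $B_i(x)/(1-\sin x)$ with $B_i=\int(1-\sin t)p_i(t)\,dt+C_i$ that you wrote. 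Your $i=1$ sanity check passes only by coincidence: $1/(1-\sin x)$ solves $u'=u(\sec x+\tan x)$ with no factor $\tfrac12$, and $i=1$ lies outside the range of validity of \Cref{cor:linear extension recurrence} in any case, so it does not validate the recipe for $i\ge 2$.

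The repair is short and leaves your architecture intact: solve the ODE as above, read off the coefficient $c_i$ of the simple pole of $f_i$ at $\pi/2$ (so $f_i\sim c_i/(\pi/2-x)$), and observe that $f_i'$ — the true EGF of $e(Z_{m,i})/m!$ — then has a double pole with leading coefficient $c_i$; your transfer and division by $(m+1)E_m/m!\sim 2(m+1)(2/\pi)^{m+1}$ go through verbatim and give $\mathbb{E}[d_i]\to c_i/\pi$, with $c_1=2$ recovering $2/\pi$. The delicate bookkeeping you correctly identify (determining $p_i$ and $C_i$ from the $i+1$ conditions \eqref{eq:egf ode ics}, case by case since no closed form for $p_i$ is known) remains, but the evaluation now involves $C_i+\int_0^{\pi/2}(\cos(t/2)-\sin(t/2))\,p_i(t)\,dt$ divided by the derivative of $\cos(x/2)-\sin(x/2)$ at $\pi/2$, rather than $2B_i(\pi/2)$ as you propose; carrying out your formula literally would yield incorrect values for $i\ge 2$.
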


This suggests the following conjecture:
\begin{conjecture}\label{conj:chordlength asymptotics}
	For $k \geq 2$,
	\[
		\mathbb{E}[d_k] \sim \sum_{\ell=1}^{k-2} (-1)^{\ell+1} \frac{\ell E_{k-\ell}}{2^{k-\ell-1}(k-\ell)!} \pi^{k-\ell-1} + (-1)^k \left(2 \left\lfloor \frac{k}{2} \right\rfloor - \frac{2k}{\pi}\right).
                                                                                                    	\]
\end{conjecture}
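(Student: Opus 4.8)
The plan is to feed the exact formula $\mathbb{E}[d_k] = \frac{e(Z_{n-3,k})}{(n-2)E_{n-3}}$ of \Cref{thm:chord lengths} into singularity analysis. Writing $m = n-3$, the conjecture is the assertion that $L_k := \lim_{m\to\infty}\frac{e(Z_{m,k})}{(m+1)E_m}$ exists and equals the displayed constant, so the whole problem is the large-$m$ behavior of $e(Z_{m,k})$ with $k$ fixed. Because \eqref{eq:euler asymptotics} says $E_m/m! \sim 2\left(\tfrac{2}{\pi}\right)^{m+1}$ --- equivalently $E(x) := \sec x + \tan x$ has a simple pole at $x=\pi/2$ --- I would analyze the matching singularity of $f_k(x) = \sum_{m\geq 0} e(Z_{m,k})\frac{x^m}{m!}$ at $x = \pi/2$, which by \eqref{eq:egf ode} is the nearest singularity to the origin.

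The key structural fact is that $f_k$ has a \emph{double} pole at $\pi/2$, not a simple one: this is already visible in the base case $f_1(x) = \sum_m E_{m+1}\frac{x^m}{m!} = \frac{1}{1-\sin x}$, and a merely simple pole would force $L_k = 0$. Since $1-\sin x$ has a double zero at $\pi/2$, the product $(1-\sin x)f_k(x)$ is analytic there; set $V_k := \lim_{x\to\pi/2}(1-\sin x)f_k(x)$. Solving the linear ODE \eqref{eq:egf ode} with the initial conditions \eqref{eq:egf ode ics} by an integrating factor produces exactly this analytic factor in closed form in terms of $p_k$ and the integration constant, and $V_k$ is its value at $\pi/2$. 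A standard transfer argument then yields $\frac{e(Z_{m,k})}{m!} \sim 2V_k(m+1)\left(\tfrac{2}{\pi}\right)^{m+2}$, and dividing by $(m+1)E_m$ gives $L_k = \frac{2}{\pi}V_k$. Thus the analytic heart of the conjecture is the single evaluation $V_k = \lim_{x\to\pi/2}(1-\sin x)f_k(x)$, after which one checks that $\frac{2}{\pi}V_k$ equals the displayed formula.

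A warning worth stating explicitly: one cannot extract $L_k$ by simply dividing the recurrence \eqref{eq:simplified recurrence} by $(m+1)E_m$ and letting $m\to\infty$. Using $\binom{m}{j}\frac{E_{m-j}}{E_m}\to\frac{(\pi/2)^j}{j!}$, the only part of the convolution that grows like $m$ is the tail, and by \eqref{eq:simplified recurrence} itself this tail reproduces $2\,\frac{e(Z_{m,k})}{E_m}$ to leading order; the leading terms then cancel and one is left with an identity among the boundary values rather than a determination of $L_k$. In other words, $L_k$ is a genuinely global quantity fixed by the initial data \eqref{eq:egf ode ics}, which is precisely why the full generating function --- and not just the bulk recursion --- is needed.

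The main obstacle is therefore producing $V_k$ (equivalently the polynomial $p_k$, or the initial conditions \eqref{eq:egf ode ics}) in closed form for every $k$; the text already notes that no general expression for $p_k$ is known, so this is the real content. I would attack it by induction on $k$, using \Cref{cor:linear extension recurrence} to express the initial conditions \eqref{eq:egf ode ics} for $f_k$ in terms of lower $e(Z_{\cdot,j})$ and then tracking how these propagate to $V_k$. The shape of the conjectured answer is encouraging: the summands $(-1)^{\ell+1}\frac{\ell E_{k-\ell}}{2^{k-\ell-1}(k-\ell)!}\pi^{k-\ell-1}$ are what the Euler numbers $E_{k-1},E_{k-2},\dots$ entering \eqref{eq:egf ode ics} contribute once one integrates $1-\sin t$ against monomials, while the residual $(-1)^k\!\left(2\lfloor k/2\rfloor - \tfrac{2k}{\pi}\right)$ carries exactly the alternating-parity behavior forced by \Cref{cor:expected ith entry,cor:expected ith entry pt 2}. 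Two independent checks should guide and validate the computation: matching the explicit values of \Cref{prop:chordlength asymptotics} for $k\leq 7$, and verifying that $L_k \to \frac12 + \frac{2}{\pi^2}$ as $k\to\infty$, the bulk mean of \Cref{cor:asymptotic chordlength}. A purely probabilistic alternative would read $L_k$, via \Cref{cor:expected ith entry,cor:expected ith entry pt 2}, as $\lim_n \mathbb{E}[\tau(k)]/n$ for a uniform random (reverse-)alternating $\tau \in S_n$ and compute the mean of the boundary-layer limit law of its first few entries; this is conceptually appealing but identifying that law explicitly looks at least as hard as the generating-function calculation.
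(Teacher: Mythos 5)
The first thing to say is that \Cref{conj:chordlength asymptotics} is stated as a conjecture: the paper contains no proof of it, only evidence --- the explicit cases $k\le 7$ in \Cref{prop:chordlength asymptotics} (obtained from Sano's exponential generating functions and from solving \eqref{eq:egf ode} with \eqref{eq:egf ode ics} for each particular $i$), the numerical data, and consistency with the bulk limit of \Cref{cor:asymptotic chordlength}. Your strategy is essentially the same route the paper takes to generate that evidence, organized as singularity analysis. The reductions you make are correct and worth having on record: with $m=n-3$ the conjecture is the evaluation of $L_k=\lim_m e(Z_{m,k})/((m+1)E_m)$; the relevant generating function does have a double pole at $\pi/2$ (your $f_1=1/(1-\sin x)$ check is right); and the transfer $L_k=\frac{2}{\pi}V_k$ with $V_k=\lim_{x\to\pi/2}(1-\sin x)f_k(x)$ is consistent ($V_1=1$ recovers $\mathbb{E}[d_1]\sim 2/\pi$). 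Your warning that the bulk recursion \eqref{eq:simplified recurrence} alone cannot determine $L_k$ --- the leading terms cancel and the answer is pinned down only by the initial data --- is also a genuinely useful observation that the paper does not make explicit.

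The gap is that the proposal stops exactly at the open problem. Everything up to ``the analytic heart of the conjecture is the single evaluation $V_k$'' is a reformulation, not a resolution: $V_k$ is determined by the polynomial $p_k$ in \eqref{eq:egf ode} (solving by the integrating factor $(1-\sin x)^{1/2}$ gives $V_k$ as an explicit integral of $p_k$ against $\sqrt{1-\sin t}$ over $[0,\pi/2]$), and the paper states that no general expression for $p_i$ is known. The proposed induction on $k$ via \Cref{cor:linear extension recurrence} is plausible but is not carried out, and it is precisely the step where the conjectured closed form would have to emerge; as written, the proposal proves nothing beyond what \Cref{thm:chord lengths} and \eqref{eq:egf ode} already give. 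One smaller technical point: \eqref{eq:egf ode} governs the \emph{shifted} generating function --- note from \eqref{eq:egf ode ics} that $f_i^{(m)}(0)=e(Z_{m-1,i})$ --- and its integrating-factor solution has only a \emph{simple} pole at $\pi/2$; the double pole you need lives in its derivative, which is the generating function $\sum_m e(Z_{m,k})x^m/m!$ you actually defined. So the claim that ``$(1-\sin x)f_k(x)$ is analytic at $\pi/2$'' requires one extra differentiation step before it matches the ODE you cite; this is fixable, but as stated the two halves of your argument refer to two different series.
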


Even for relatively small $i$, these values are rapidly approaching the overall limiting value $\frac{1}{2} + \frac{2}{\pi^2} \approx 0.7026$ from \Cref{cor:asymptotic chordlength} and the convergence in $n$ is also quite rapid; see \Cref{fig:chordlength asymptotics}.

\begin{figure}[t]
	\centering
		\includegraphics[width=5in]{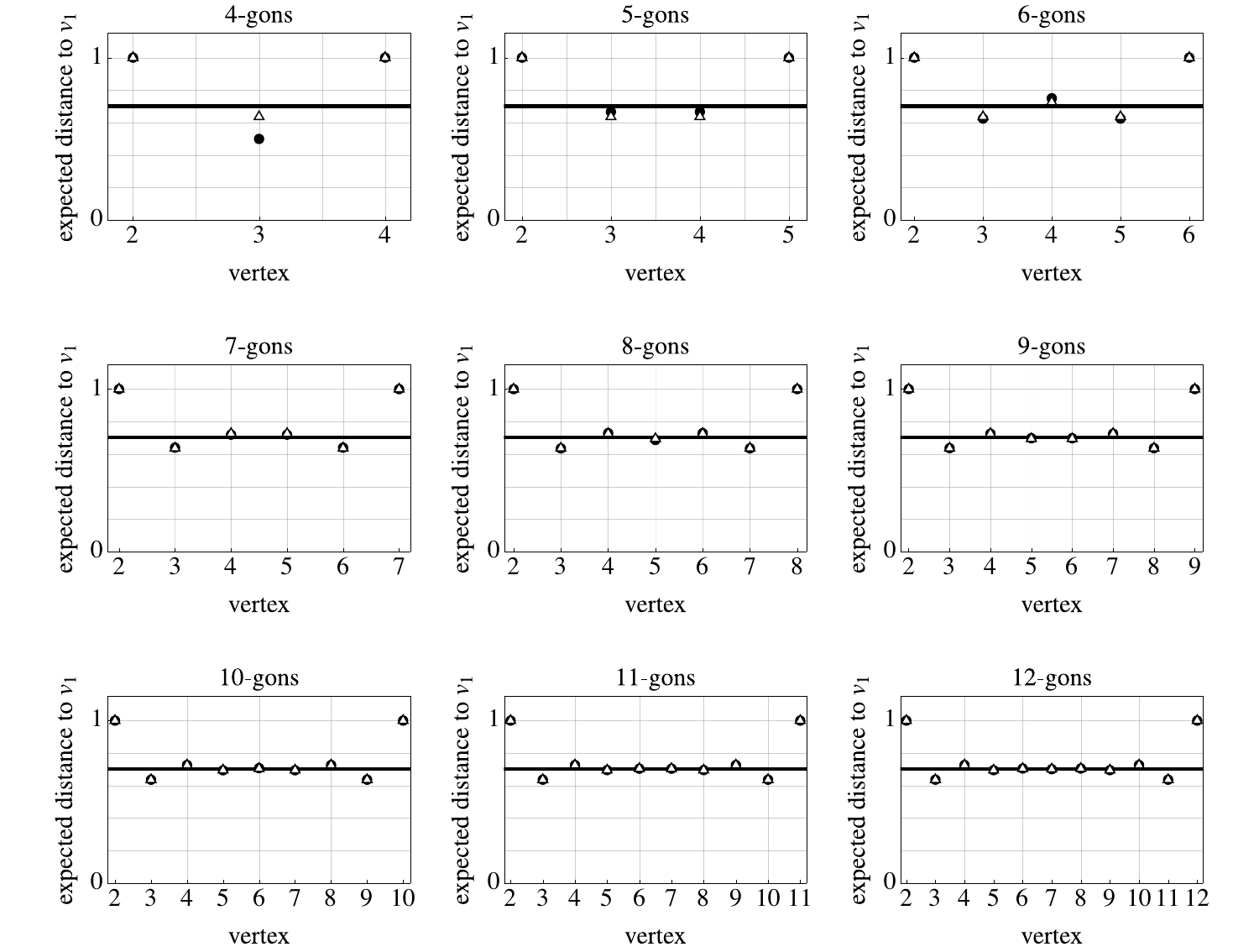}
	\caption{Expected values of the distances from $v_i$ to $v_1$ in confined equilateral $n$-gons (marked with the solid dot $\bullet$) compared to the asymptotic values from \Cref{prop:chordlength asymptotics} (marked with the open triangle $\triangle$) and the value $\frac{1}{2} + \frac{2}{\pi^2} \approx 0.7026$ from \Cref{cor:asymptotic chordlength} (solid line). Note that $|v_2 - v_1|$ and $|v_n - v_1|$ are always exactly~1.}
	\label{fig:chordlength asymptotics}
\end{figure}


\section{Numerical Experiments on Average Turning Angle}
\label{sec:numerics}

	\begin{figure}[t]
		\centering
			\includegraphics[height=1.7in]{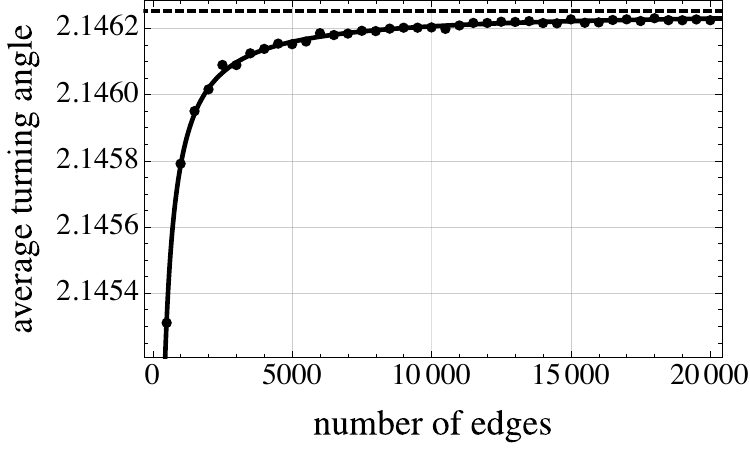} \qquad
			\includegraphics[height=1.7in]{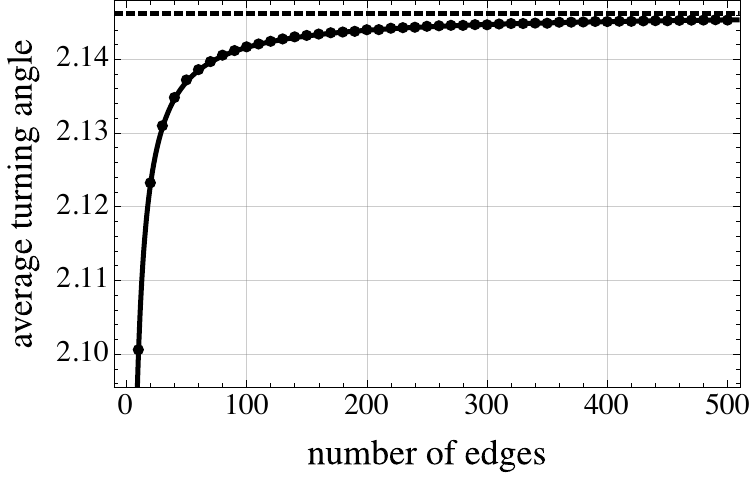}
		\caption{Average turning angle for 1,000,000 random $n$-gons in $\Polhat(n;1)$ for a range of $n$. \emph{Left:} $n$ from from 500 to 20,000 in steps of 500; \emph{Right:} $n$ from 10 to 500 in steps of 10. In both plots, the solid curve is the graph of the fitted function $f(n) = 2.14625-\frac{0.46742}{n}$, and the dashed line is the limiting value $2.14625 \approx \frac{\pi}{2} + 0.57545$ of $f(n)$.}
		\label{fig:total curvature}
	\end{figure}

	We now move beyond the realm of things we can prove and use CPOP to explore the average total curvature of tightly confined equilateral polygons. 
	
	The \emph{total curvature} of a polygon $P$ represented by edge vectors $e_1, \dots , e_n$ is the quantity
	\[
		\kappa(P) = \sum_{i=1}^n \phi(e_i, e_{i+1}),
	\]
	where $\phi(v,w)$ is the angle between the vectors $v$ and $w$ (in particular, $\phi(e_i,e_{i+1})$ is the $i$th exterior angle of $P$) and we compute indices cyclically so that $e_{n+1} = e_1$. To compare total curvatures for polygons with different lengths, we will use the \emph{average turning angle} for $P$, which is simply the average of the $\phi(e_i,e_{i+1})$, namely $\frac{\kappa(P)}{n}$.

	For unconfined equilateral polygons, Grosberg~\cite{grosbergTotalCurvatureTotal2008} showed that the expected total curvature is
	\[
		\mathbb{E}_{\Polhat(n)}[\kappa] = n \frac{\pi}{2} + \frac{3\pi}{8} + O\left(\frac{1}{n}\right);
	\]
	an integral formula for the exact value of $\mathbb{E}_{\Polhat(n)}[\kappa]$ is given in~\cite[Theorem~12]{cantarellaSymplecticGeometryClosed2016}. When edge vectors are chosen independently (that is, in a random walk), the expected turning angle is exactly $\frac{\pi}{2}$; intuitively, the excess in Grosberg's formula is due to the fact that each edge in a closed polygon feels some pressure to return to the starting point. Notice that the average turning angle is decreasing as a function of $n$, as one would expect: the closure constraint is felt by all edges equally, so with more edges the effect on each individual edge is smaller.

	In tight confinement consecutive edges can almost never point in even approximately the same direction, so we expect the typical turning angle to be larger, and hence for total curvature to increase; see, e.g.,~\cite{cantarellaSymplecticGeometryClosed2016,diaoTotalCurvatureTotal2018,diaoCurvatureRandomWalks2013}.

	Using \Cref{alg:sampling}, we generated 1,000,000 random elements of $\Polhat(n;1)$ for $n$ being multiples of 500 up to 20,000 and computed their total curvatures. The sample averages of turning angle are shown in \Cref{fig:total curvature} (left). We tried to fit this average turning angle data to a function of the form $a + \frac{b}{n}$ and found that we get an excellent fit ($R^2 > 1-10^{-11}$) to the function $f(n) = 2.14625-\frac{0.46742}{n}$. This is somewhat bigger than but broadly compatible with Diao, Ernst, Rawdon, and Ziegler's estimate of $\frac{\pi}{2} + 0.57 \approx 2.1408$ for the asymptotic average turning angle of polygons in confinement radius $1$~\cite[Figure~2(a)]{diaoTotalCurvatureTotal2018}.

	Having found this fit, we then generated 1,000,000 random elements of $\Polhat(n;1)$ for each $n=10,\dots , 500$ in steps of 10. The sample averages of turning angle are shown in \Cref{fig:total curvature} (right) along with the graph of $f(n)$. The fit is still very good, despite the fact that our derivation of $f(n)$ knew nothing about this data. The exact values of average turning angle for all runs are given in \Cref{sec:data}.

	\begin{figure}[t]
		\centering
			\includegraphics[width=3in]{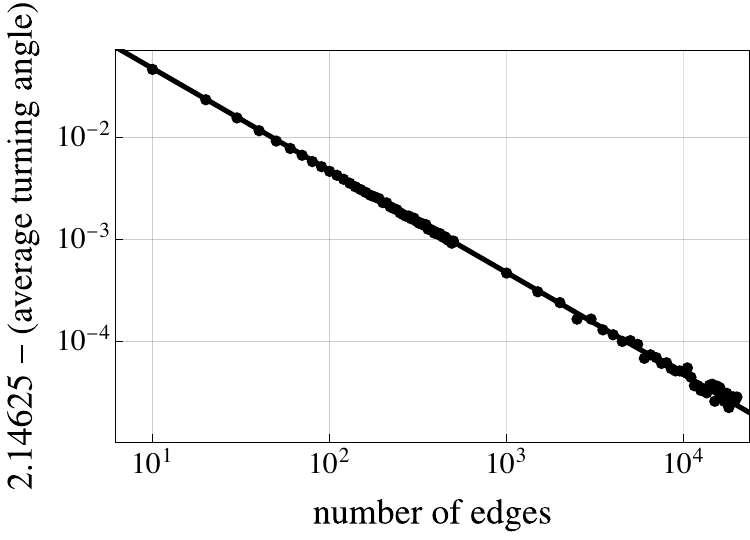}
		\caption{The log--log plot of $2.14625 - (\text{average turning angle})$ for all of our data. The line is the graph of $2.14625 - f(n) = \frac{0.46742}{n}$. We do not show the best-fit line, which has slope $\approx -0.99154$, because it aligns so well to the graph that it would be effectively invisible in the plot.}
		\label{fig:log fit}
	\end{figure}

	\Cref{fig:log fit} shows a log--log plot of $2.14625 - (\text{average turning angle})$ for all of our data, along with the graph of $2.14625 - f(n) = \frac{0.46742}{n}$ and the best fit line. The best fit line has slope $\approx -0.99154$, which is compatible with the hypothesis that $-1$ is the correct power of $n$. Overall, we see strong evidence from the data that the average turning angle of polygons in $\Polhat(n;1)$ fits the model $2.14625-\frac{0.46742}{n}$.

	As one more piece of evidence for this model, we now give an argument that the asymptotic limit should be $\approx 2.14625$.
	
	\begin{figure}[t]
		\centering
			\includegraphics[width=2.5in]{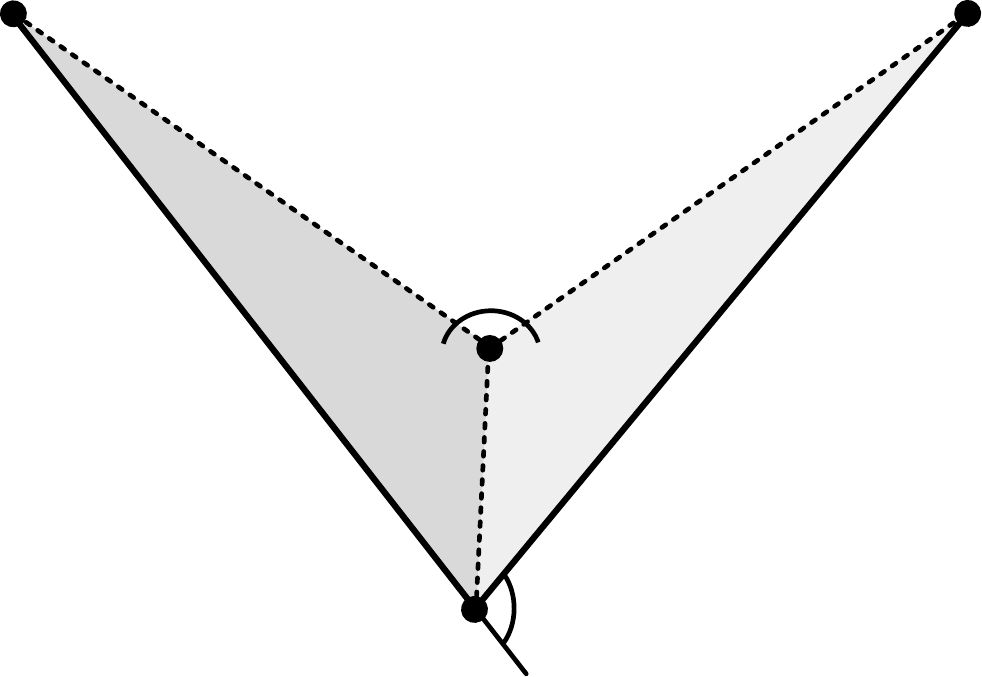}
			\put(-113,10){\smash{$v_{i+1}$}}
			\put(-190,121){\smash{$v_{i}$}}
			\put(2,121){\smash{$v_{i+2}$}}
			\put(-130,90){\smash{$d_{i-2}$}}
			\put(-110,40){\smash{$d_{i-1}$}}
			\put(-61,90){\smash{$d_{i}$}}
			\put(-84,11){\smash{$\phi_i$}}
			\put(-99,70){\smash{$\theta_{i-1}$}}
			\put(-88,55){\smash{$v_1$}}
		\caption{A portion of an equilateral $n$-gon, showing vertices $v_1, v_i, v_{i+1},$ and $v_{i+2}$, along with diagonal lengths $d_{i-2}, d_{i-1},$ and $d_{i}$, the dihedral angle $\theta_{i-1}$, and the $i$th turning angle $\phi_i$.}
		\label{fig:turning angle}
	\end{figure}

	Consider the $i$th turning angle $\phi_i = \phi(e_i,e_{i+1})$, and the neighboring edges, vertices, and diagonals, as illustrated in \Cref{fig:turning angle}. We imagine that this is a small piece of an equilateral $n$-gon with $n$ very large, so that we are in the asymptotic limit. Since the action-angle coordinates $((d_1, \dots , d_{n-3}),(\theta_1, \dots , \theta_{n-3}))$ give coordinates on $\Polhat(n;1)$, we can compute the asymptotic expected value of $\phi_i$ by expressing it as a function of the action-angle coordinates, and then integrating against the asymptotic distribution.

	\begin{proposition}\label{prop:phi}
		The turning angle $\phi_i$ can be expressed in terms of action-angle coordinates as
		\begin{equation}\label{eq:phi}
			\phi_i = \arccos\left(\frac{d_{i-2}^2+d_i^2}{2} - 1 - \frac{(d_{i-2}^2 + d_{i-1}^2 - 1)(d_{i-1}^2 + d_{i}^2 - 1) - A \cos \theta_{i-1}}{4 d_{i-1}^2}\right),
		\end{equation}
		where 
		\begin{multline*}
			A = \sqrt{(d_{i-2}+d_{i-1}-1)(d_{i-2}-d_{i-1}+1)(-d_{i-2}+d_{i-1}+1)(d_{i-2}+d_{i-1}+1)}  \\
			\times \sqrt{(d_{i-1}+d_{i}-1)(d_{i-1}-d_{i}+1)(-d_{i-1}+d_{i}+1)(d_{i-1}+d_{i}+1)}.
		\end{multline*}
	\end{proposition}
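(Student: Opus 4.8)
The plan is to exploit the fact that the vertices $v_i$, $v_{i+1}$, $v_{i+2}$ form a single triangle whose base is the sum of the two edge vectors meeting at $v_{i+1}$, so that the turning angle is controlled entirely by the length $|v_{i+2}-v_i|$. Since $e_i = v_{i+1}-v_i$ and $e_{i+1} = v_{i+2}-v_{i+1}$ are unit vectors with $e_i + e_{i+1} = v_{i+2}-v_i$, expanding $|v_{i+2}-v_i|^2 = |e_i|^2 + |e_{i+1}|^2 + 2\,e_i\cdot e_{i+1}$ gives immediately
\[
	\cos\phi_i = e_i\cdot e_{i+1} = \frac{|v_{i+2}-v_i|^2}{2}-1.
\]
Thus the entire content of the proposition is the computation of the squared distance $|v_{i+2}-v_i|^2$ from the action--angle data, and the leading term $\tfrac{d_{i-2}^2+d_i^2}{2}-1$ of \eqref{eq:phi} is exactly this law of cosines.

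Next I would pass to the two fan triangles adjacent to the diagonal from $v_1$ to $v_{i+1}$: the triangle $v_1 v_i v_{i+1}$ has side lengths $d_{i-2}, d_{i-1}, 1$, and the triangle $v_1 v_{i+1} v_{i+2}$ has side lengths $d_{i-1}, d_i, 1$ (using $d_j = |v_{j+2}-v_1|$ and the unit edge lengths). These share the edge of length $d_{i-1}$, and by definition the dihedral angle between their planes is $\theta_{i-1}$. Placing $v_1$ at the origin with $v_{i+1}$ on a coordinate axis at distance $d_{i-1}$ and the first triangle in a coordinate plane, I obtain $v_{i+2}$ by rotating the in-plane position of the second apex by $\theta_{i-1}$ about the shared edge. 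Writing $v_i = (x_i, y_i, 0)$ and $v_{i+2} = (x', y'\cos\theta_{i-1}, y'\sin\theta_{i-1})$, the projections onto the shared diagonal come from the law of cosines in each triangle,
\[
	x_i = \frac{d_{i-2}^2 + d_{i-1}^2 - 1}{2 d_{i-1}}, \qquad x' = \frac{d_i^2 + d_{i-1}^2 - 1}{2 d_{i-1}},
\]
while the heights satisfy $4 d_{i-1}^2 y_i^2 = 4 d_{i-1}^2 d_{i-2}^2 - (d_{i-2}^2+d_{i-1}^2-1)^2$ and the analogous identity for $y'$.

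I would then substitute into $|v_{i+2}-v_i|^2 = (x_i - x')^2 + y_i^2 + (y')^2 - 2 y_i y'\cos\theta_{i-1}$ and simplify. Using $x_i^2 + y_i^2 = d_{i-2}^2$ and $(x')^2 + (y')^2 = d_i^2$ collapses the quadratic terms to $d_{i-2}^2 + d_i^2 - 2 x_i x'$, and the cross term $2 x_i x'$ reproduces the product $(d_{i-2}^2+d_{i-1}^2-1)(d_{i-1}^2+d_i^2-1)$ in the numerator of \eqref{eq:phi}. The key algebraic step is that the two height identities factor via the classical relation $(2ab)^2 - (a^2+b^2-c^2)^2 = (a+b-c)(a-b+c)(-a+b+c)(a+b+c)$, so that $2 d_{i-1} y_i$ and $2 d_{i-1} y'$ are precisely the two square roots defining $A$; hence $2 y_i y' = A/(2 d_{i-1}^2)$, producing the $A\cos\theta_{i-1}$ term. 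Dividing by $2$ and subtracting $1$ then yields \eqref{eq:phi}.

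The one point requiring genuine care is the sign convention for the dihedral angle. The naive rotation just described measures the angle from the folded configuration in which $v_i$ and $v_{i+2}$ lie on the same side of the shared diagonal, which produces $+A\cos\theta_{i-1}$ rather than the $-A\cos\theta_{i-1}$ appearing in \eqref{eq:phi}. This is reconciled by observing that the action--angle dihedral angle is measured from the flat, spread-out configuration, i.e.\ it is the supplement of the rotation angle above, so that $\cos\theta_{i-1}$ flips sign and the formula comes out as stated. I expect this sign bookkeeping, rather than any of the (routine) algebra, to be the only delicate part of the argument.
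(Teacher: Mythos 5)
Your argument is correct and arrives at exactly the right formula, but it runs along a somewhat different track than the paper's proof. The paper stays coordinate-free: it writes $\cos\phi_i = e_i\cdot e_{i+1}$, expands everything in terms of the dot products $v_i\cdot v_{i+1}$, $v_{i+1}\cdot v_{i+2}$, $v_i\cdot v_{i+2}$ (with $v_1$ at the origin), reduces to $\cos\phi_i = \tfrac{d_{i-2}^2+d_i^2}{2}-1-v_i\cdot v_{i+2}$, and then extracts $v_i\cdot v_{i+2}$ from the dihedral angle by applying the Binet--Cauchy identity to $(v_i\times v_{i+1})\cdot(v_{i+1}\times v_{i+2})$ and Heron's formula to the cross-product norms. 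You instead build an explicit orthonormal frame adapted to the shared diagonal $v_1v_{i+1}$, realize the fold as a rotation by the dihedral angle, and compute $|v_{i+2}-v_i|^2$ directly; the Heron-type factorization $(2ab)^2-(a^2+b^2-c^2)^2=\prod(\pm a\pm b\pm c)$ then plays the same role in producing $A$ that Heron's formula for the triangle areas plays in the paper. The two computations are of essentially equal length; the paper's has the advantage of never choosing coordinates (so there is no orientation bookkeeping beyond the definition of $\theta_{i-1}$), while yours is more elementary and makes the geometric content of $A$ (a product of twice the two triangle heights over the shared edge, times $4d_{i-1}^2$) more visible. Your worry about the sign of $A\cos\theta_{i-1}$ is legitimate and you resolve it correctly: the paper defines $\theta_{i-1}$ as the angle between the normals $v_i\times v_{i+1}$ and $v_{i+1}\times v_{i+2}$, which is $0$ in the flat, spread-out configuration and hence is the supplement of your rotation-from-folded angle, so $\cos\theta_{i-1}=-\cos\theta^{\mathrm{rot}}$ and your $+A\cos\theta^{\mathrm{rot}}$ becomes the paper's $-A\cos\theta_{i-1}$. (Since $\theta_{i-1}$ is uniform on $[0,2\pi)$ this sign is immaterial for the expected-value integral in the following theorem, but it does matter for the pointwise formula as stated, and you got it right.)
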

	
	We defer the proof of this proposition until the end of the section.

	The goal is to integrate the expression \eqref{eq:phi} against the distribution of $((d_1, \dots , d_{n-3}),(\theta_1, \dots , \theta_{n-3}))$. Since \eqref{eq:phi} depends only on $d_{i-2},d_{i-1},d_i,$ and $\theta_{i-1}$, we can integrate out the remaining variables and we should get a quadruple integral over the variables $(d_{i-2},d_{i-1},d_i,\theta_{i-1})$. 
	
	The random variate $\theta_{i-1}$ is independent of the rest and has a very simple distribution: it is uniform on $[0,2\pi)$.

	In the asymptotic limit, \Cref{cor:asymptotic chordlength} implies that the diagonal lengths $d_{i-2}, d_{i-1}$, and $d_i$ will be chosen from the limiting density $f(t) = 1- \cos(\pi t)$. However, if we just take independent draws from this distribution, we have no guarantee that they will satisfy the defining inequalities $1 \leq d_{i-2} + d_{i-1}$ and $1 \leq d_{i-1} + d_i$.
	
	To guarantee that the inequalities are satisfied, we can generate $d_{i-2}$ with density $f$, and then generate $d_{i-1}$ with conditional density $f_{d_{i-2}}$ and $d_i$ with conditional density $f_{d_{i-1}}$, proving the following.
	
	\begin{theorem}\label{thm:asymptotic turning angle}
		The expected turning angle in the asymptotic limit is
			\begin{multline}\label{eq:Ephi}
				\mathbb{E}_\infty[\phi_i] = \frac{1}{2\pi} \int_0^{2\pi}\!\!\!\!\int_0^1\!\!\! \int_0^1\!\!\!\int_0^1\!  \phi_i f(d_{i-2})f_{d_{i-2}}(d_{i-1}) f_{d_{i-1}}(d_i)   \operatorname{d}\!d_{i-2} \operatorname{d}\!d_{i-1} \operatorname{d}\!d_i\operatorname{d}\!\theta_{i-1} \\
				= \frac{\pi}{8} \int_0^{2\pi}\!\!\!\!\int_{1-d_{i-1}}^1\! \int_{1-d_{i-2}}^1\!\int_0^1\!  \phi_i (1-\cos(\pi d_{i-2}))  \frac{\sin\left(\frac{\pi}{2}d_{i}\right)}{\sin\left(\frac{\pi}{2}d_{i-2}\right)} \operatorname{d}\!d_{i-2} \operatorname{d}\!d_{i-1} \operatorname{d}\!d_i\operatorname{d}\!\theta_{i-1},
			\end{multline}
			where $\phi_i$ has the explicit expression \eqref{eq:phi}.
	\end{theorem}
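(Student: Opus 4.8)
The plan is to read \eqref{eq:Ephi} as two separate assertions and verify each in turn: the first equality is essentially a transcription of the sampling construction set up just above the theorem, and the second is a substitution followed by a telescoping cancellation.

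For the first equality I would invoke the Markov-chain picture from \Cref{sec:sampling}. In the large-$n$ limit the chord lengths $d_1, d_2, \dots$ behave like the stationary Markov chain whose one-step transition density is $f_x$ from \eqref{eq:conditional density} and whose invariant density is $f(t) = 1 - \cos(\pi t)$; this is precisely the content of \Cref{cor:asymptotic chordlength} together with the stationarity computation that precedes it. Consequently the asymptotic joint law of the window $(d_{i-2}, d_{i-1}, d_i)$ is the product $f(d_{i-2}) f_{d_{i-2}}(d_{i-1}) f_{d_{i-1}}(d_i)$, and because the conditional densities are supported only where $d_{i-1} \geq 1 - d_{i-2}$ and $d_i \geq 1 - d_{i-1}$, the relevant triangle inequalities are enforced automatically. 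Since $\theta_{i-1}$ is independent of the diagonals and uniform on $[0,2\pi)$, contributing the factor $\frac{1}{2\pi}$, and since \Cref{prop:phi} writes $\phi_i$ as a function of exactly $d_{i-2}, d_{i-1}, d_i$, and $\theta_{i-1}$, integrating $\phi_i$ against this joint density (with all remaining action-angle variables integrated out) produces the quadruple integral in the first line of \eqref{eq:Ephi}.

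For the second equality I would substitute $f(t) = 1 - \cos(\pi t)$ and the explicit conditional density \eqref{eq:conditional density} into the integrand. The decisive simplification is a telescoping cancellation: the factor $\sin\!\left(\frac{\pi}{2} d_{i-1}\right)$ in the numerator of $f_{d_{i-2}}(d_{i-1})$ cancels against the identical factor in the denominator of $f_{d_{i-1}}(d_i)$, leaving $\frac{\pi^2}{4}\,\frac{\sin\left(\frac{\pi}{2} d_i\right)}{\sin\left(\frac{\pi}{2} d_{i-2}\right)}$ once the two factors of $\frac{\pi}{2}$ are collected. Multiplying by the prefactor $\frac{1}{2\pi}$ gives the constant $\frac{1}{2\pi}\cdot\frac{\pi}{2}\cdot\frac{\pi}{2} = \frac{\pi}{8}$, while the two indicator functions $\one_{\{d_{i-1} \geq 1 - d_{i-2}\}}$ and $\one_{\{d_i \geq 1 - d_{i-1}\}}$ are absorbed into the lower limits of integration $1 - d_{i-2}$ and $1 - d_{i-1}$. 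This reproduces the second line of \eqref{eq:Ephi} verbatim.

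The calculation is short, so the genuine content lies entirely in the justification behind the first equality rather than in any difficult integral. The main point to secure is that \Cref{cor:asymptotic chordlength}, stated as a \emph{marginal} statement about the density of a single $d_i$, really upgrades to the \emph{joint} statement about three consecutive diagonals with the correct conditional dependence — that is, that the limiting law is the full stationary law of the chain rather than a product of independent copies of $f$. This is exactly why $d_{i-1}$ and $d_i$ must be drawn from $f_{d_{i-2}}$ and $f_{d_{i-1}}$ and not independently from $f$, and it is what makes the triangle inequalities hold almost surely. A secondary bookkeeping check is that \eqref{eq:phi} involves no diagonal or dihedral outside the window $\{d_{i-2}, d_{i-1}, d_i, \theta_{i-1}\}$, which is precisely \Cref{prop:phi} and is what legitimizes integrating out the remaining action-angle variables.
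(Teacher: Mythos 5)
Your proposal is correct and follows essentially the same route as the paper, which likewise derives the first equality from the stationary Markov-chain law $f(d_{i-2})f_{d_{i-2}}(d_{i-1})f_{d_{i-1}}(d_i)$ for three consecutive diagonals (with $\theta_{i-1}$ independent and uniform, and \Cref{prop:phi} justifying integrating out the remaining action-angle variables), and then obtains the second equality by substituting \eqref{eq:conditional density} and letting the $\sin\left(\frac{\pi}{2}d_{i-1}\right)$ factors telescope. If anything, your explicit flagging of the marginal-to-joint upgrade of \Cref{cor:asymptotic chordlength} is more careful than the paper's own informal discussion, which simply asserts the conditional sampling scheme before the theorem statement.
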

	
	Numerically integrating this in \emph{Mathematica} yields $2.14625$ (to 5 decimal places), perfectly matching our fit for the asymptotic limit of average turning angle.

	All of this gives compelling evidence for the following conjecture on the form of the asymptotic expansion of expected turning angle for random polygons in $\Polhat(n;1)$.

	\begin{conjecture}\label{conj:turning angle}
		For large $n$, the expected $i$th turning angle of a random polygon in $\Polhat(n;1)$ is
		\[
			\mathbb{E}[\phi_i] = \mathbb{E}_\infty[\phi_i] - \frac{b}{n} + o\left(\frac{1}{n}\right),
		\]
		where $\mathbb{E}_\infty[\phi_i] \approx 2.14625$ is the explicit integral given in~\eqref{eq:Ephi}, and $b$ is a numerical constant $b \approx 0.46742$.
		
		Equivalently, the expected total curvature on $\Polhat(n;1)$ is
		\[
			\mathbb{E}[\kappa] = \mathbb{E}_\infty[\phi_i]n - b + o(1).
		\]
	\end{conjecture}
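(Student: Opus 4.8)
Since this is a conjecture rather than a settled theorem, I will outline the route I believe is most likely to succeed and flag where it stalls. The engine is the transfer operator $T$ on $L^2[0,1]$ with kernel $K(x,y)=\one_{\{x+y\ge 1\}}$, so that $\vol(\mathcal P_n(1))=\langle\one,T^{n-4}\one\rangle$ and, more generally, the constraints in~\eqref{eq:confined inequalities} are encoded by powers of $T$. The Volterra computation in the excerpt shows that $h(x)=\sin(\tfrac\pi2 x)$ satisfies $Th=\tfrac2\pi h$, so $\tfrac2\pi$ is the top eigenvalue; solving $\int_{1-x}^1 g(y)\,dy=\lambda g(x)$ in general yields the self-adjoint spectrum $\lambda_k=\tfrac{2}{(2k+1)\pi}$ (up to sign) with eigenfunctions $\sin\!\big(\tfrac{(2k+1)\pi}{2}x\big)$, and the eigenvalues reproduce the Euler-number asymptotics~\eqref{eq:euler asymptotics}. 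The crucial quantitative input is the spectral gap $|\lambda_2/\lambda_1|=\tfrac13$.

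First I would write $\mathbb E[\phi_i]$ as an exact finite-$n$ integral. The marginal density of $(d_{i-2},d_{i-1},d_i)$ under Lebesgue measure on $\mathcal P_n(1)$ factors through $T$ as
\[
  p_n(a,b,c)=\frac{(T^{i-3}\one)(a)\,\one_{\{a+b\ge1\}}\one_{\{b+c\ge1\}}\,(T^{n-3-i}\one)(c)}{\langle\one,T^{n-4}\one\rangle},
\]
and $\theta_{i-1}$ is independent and uniform. Feeding the spectral decomposition $T^m\one=\sum_k\lambda_k^m\langle\one,h_k\rangle h_k$ into numerator and denominator, the powers of $\lambda_1$ cancel to leave $p_n\to\tfrac{\pi^2}{2}\sin(\tfrac\pi2 a)\sin(\tfrac\pi2 c)\,\one_{\{a+b\ge1\}}\one_{\{b+c\ge1\}}$, which is exactly the density $f(d_{i-2})f_{d_{i-2}}(d_{i-1})f_{d_{i-1}}(d_i)$ of \Cref{thm:asymptotic turning angle}. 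Since $0\le\phi_i\le\pi$ is bounded, dominated convergence then upgrades the heuristic derivation of~\eqref{eq:Ephi} to a rigorous statement $\mathbb E[\phi_i]\to\mathbb E_\infty[\phi_i]$, the gap giving the quantitative error $\mathbb E[\phi_i]-\mathbb E_\infty[\phi_i]=O\!\big(3^{-\min(i,\,n-i)}\big)$ for edges away from the ends.

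The $1/n$ in the conjecture is then a boundary effect. Summing over edges, $\mathbb E[\kappa]=\sum_{i=1}^n\mathbb E[\phi_i]=n\,\mathbb E_\infty[\phi_i]+\Sigma_n$ with $\Sigma_n=\sum_i(\mathbb E[\phi_i]-\mathbb E_\infty[\phi_i])$. Because the per-edge deviations decay geometrically in the distance to the nearer end, $\Sigma_n$ converges as $n\to\infty$; using the reflection symmetry $Z_{n,i}\cong Z_{n,n+1-i}$ to equate the two ends, the limit is
\[
  \lim_{n\to\infty}\Sigma_n=-b,\qquad b=-2\sum_{i\ge1}\big(\mathbb E^{\mathrm{end}}_\infty[\phi_i]-\mathbb E_\infty[\phi_i]\big),
\]
where $\mathbb E^{\mathrm{end}}_\infty[\phi_i]$ is the expected $i$th turning angle in a semi-infinite confined chain, obtained by keeping the finite-$i$ truncation of the left factor $(T^{i-3}\one)/\lambda_1^{i-3}$ while replacing the right factor by its limit $h$. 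This gives $\mathbb E[\kappa]=\mathbb E_\infty[\phi_i]\,n-b+o(1)$ (in fact with exponentially small remainder), and dividing by $n$ yields the equivalent averaged form $\tfrac1n\mathbb E[\kappa]=\mathbb E_\infty[\phi_i]-\tfrac bn+o(\tfrac1n)$.

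Two genuine obstacles remain, and they are why the statement is only a conjecture. The routine-but-careful part is that the boundary edges $i\in\{1,2,n-1,n\}$ are not covered by~\eqref{eq:phi}: the unit segments $v_1v_2$ and $v_1v_n$ and the degenerate ``diagonal'' $d_0=1$ force separate turning-angle formulas that must be included correctly in $\Sigma_n$. The hard part is evaluating the constant $b$: the series above is a sum of $\arccos$ of algebraic functions of the diagonals integrated against the subleading eigenfunctions $\sin\!\big(\tfrac{(2k+1)\pi}{2}x\big)$, and I see no reason to expect a closed form---hence the appeal to the numerical value $b\approx0.46742$. Proving the conjecture as stated would require (i) showing this series sums to exactly the fitted constant, (ii) confirming that the proven $\mathbb E_\infty[\phi_i]$ of~\eqref{eq:Ephi} equals $2.14625$, and (iii) establishing $b\ne0$, so that the correction is genuinely of order $1/n$ rather than $o(1/n)$. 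Step (iii) should follow from a sign estimate on the dominant $k=1$ correction, but steps (i)--(ii) appear to be the true sticking point.
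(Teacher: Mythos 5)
This statement is a conjecture: the paper does not prove it, and neither do you---you say so explicitly---so the fair comparison is between your program and the paper's supporting evidence. The paper's support consists of (a) the empirical fit $2.14625 - 0.46742/n$ to sampled average turning angles, (b) a log--log plot consistent with the power $-1$, and (c) \Cref{thm:asymptotic turning angle}, which obtains the limiting integral \eqref{eq:Ephi} by feeding the equilibrium density of Marchal's chain into \Cref{prop:phi}; its numerical value $2.14625$ matches the fitted constant. Your route is genuinely different and, in its checkable parts, correct: the operator $T$ with kernel $\one_{\{x+y \geq 1\}}$ is self-adjoint with eigenpairs $\lambda_k = (-1)^k \frac{2}{(2k+1)\pi}$, $h_k(x) = \sin\bigl(\frac{(2k+1)\pi}{2}x\bigr)$, and these do reproduce the signed series \eqref{eq:euler asymptotics}; your exact finite-$n$ marginal $p_n(a,b,c)$ is the right expression for the chain of constraints \eqref{eq:confined inequalities}, and its spectral limit $\frac{\pi^2}{2}\sin\bigl(\frac{\pi}{2}a\bigr)\sin\bigl(\frac{\pi}{2}c\bigr)\one_{\{a+b\geq 1\}}\one_{\{b+c\geq 1\}}$ does equal $f(a)f_a(b)f_b(c)$, since $2\sin^2\bigl(\frac{\pi}{2}a\bigr)\cdot\frac{\pi}{2}\frac{\sin(\pi b/2)}{\sin(\pi a/2)}\cdot\frac{\pi}{2}\frac{\sin(\pi c/2)}{\sin(\pi b/2)}$ telescopes. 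This upgrades the paper's heuristic derivation of \Cref{thm:asymptotic turning angle} to a rigorous convergence statement with quantitative rate $O\bigl(3^{-\min(i,n-i)}\bigr)$, which the paper does not have, and it supplies a mechanism for $b$ (a convergent boundary-layer series at the root vertex) plus a sharper prediction: the remainder in $\mathbb{E}[\kappa] = \mathbb{E}_\infty[\phi_i]\,n - b + o(1)$ should be exponentially small, not merely $o(1)$. The sticking points you honestly flag---no closed form for $b$, matching the fitted $0.46742$, and $b \neq 0$---coincide with the authors' own assessment in the conclusion that a proof ``seems out of reach.''

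One substantive caveat on how your picture interacts with the literal statement: your spectral-gap estimate implies that for a fixed interior $i$ (say $i \approx n/2$) the deviation $\mathbb{E}[\phi_i] - \mathbb{E}_\infty[\phi_i]$ is exponentially small, not $-b/n$. So under your mechanism the first display of the conjecture is false edge-by-edge and must be read as a statement about the edge-averaged turning angle $\kappa(P)/n$, with the constant $-b$ concentrated in the $O(1)$ angles near the root (those involving $d_0 = d_{n-2} = 1$, which, as you note, are not covered by \eqref{eq:phi} and need separate formulas). This reading is consistent with the paper's data, which measures averages, but it is a testable refinement---and a mild correction---of the conjecture as stated; had you carried out the boundary computation, it would distinguish your prediction from, say, a Grosberg-type picture in which every edge feels a $\Theta(1/n)$ shift.
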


	We don't have a guess for an explicit formula for $b$.

	Unlike with unconfined polygons, the average turning angle increases with $n$ rather than decreasing. This phenomenon only seems to occur in very tight confinement (see, e.g.,~\cite[Figure~12]{cantarellaSymplecticGeometryClosed2016} and~\cite[Sections~5.1--5.2]{diaoTotalCurvatureTotal2018}).
	
	We conclude this section with the proof of \Cref{prop:phi}.
	
	\begin{proof}[{Proof of \Cref{prop:phi}}]
	Since the edges $e_i = v_{i+1}-v_i$ and $e_{i+1} = v_{i+2}-v_{i+1}$ have unit length, $\cos \phi_i$ is simply their dot product:
	\begin{equation}\label{eq:cosphi1}
		\cos\phi_i = (v_{i+1}-v_i) \cdot (v_{i+2}-v_{i+1}) = v_{i+1} \cdot v_{i+2} - v_i \cdot v_{i+2} - \|v_{i+1}\|^2 + v_i \cdot v_{i+1}.
	\end{equation}
	Since the root vertex $v_1$ is at the origin, 
	\begin{equation}\label{eq:norm vi+1}
		\|v_{i+1}\|^2 = d_{i-1}^2
	\end{equation} 
	and 
	\[
		1 = \|e_i\|^2 = (v_{i+1} - v_i) \cdot (v_{i+1} - v_i) = \|v_{i+1}\|^2 - 2 v_i \cdot v_{i+1} + \|v_i\|^2 = d_{i-1}^2 - 2 v_i \cdot v_{i+1} + d_{i-2}^2,
	\]
	so 
	\begin{equation}\label{eq:consecutive dots}
		v_i \cdot v_{i+1} = \frac{d_{i-1}^2 + d_{i-2}^2 - 1}{2} \quad \text{and similarly} \quad v_{i+1} \cdot v_{i+2} = \frac{d_{i}^2 + d_{i-1}^2 - 1}{2}.
	\end{equation}
	Plugging \eqref{eq:norm vi+1} and \eqref{eq:consecutive dots} into \eqref{eq:cosphi1} yields
	\begin{equation}\label{eq:cosphi}
		\cos \phi_i = \frac{d_{i-2}^2 + d_i^2}{2} - 1 - v_i \cdot v_{i+2}.
	\end{equation}
	
	On the other hand, the dihedral angle $\theta_{i-1}$ is the angle between the unit normal vectors $\frac{v_i \times v_{i+1}}{\|v_i \times v_{i+1}\|}$ and $\frac{v_{i+1} \times v_{i+2}}{\|v_{i+1} \times v_{i+2}\|}$ to the two triangles, so
	\begin{equation}\label{eq:costheta1}
		\cos\theta_{i-1} = \frac{(v_i \times v_{i+1})\cdot(v_{i+1} \times v_{i+2})}{\|v_i \times v_{i+1}\|\|v_{i+1} \times v_{i+2}\|}.
	\end{equation}
	Using the Binet–Cauchy identity, the numerator 
	\begin{multline}\label{eq:quadruple product}
		(v_i \times v_{i+1})\cdot(v_{i+1} \times v_{i+2}) = (v_i \cdot v_{i+1})(v_{i+1} \cdot v_{i+2}) - (v_i \cdot v_{i+2})(v_{i+1} \cdot v_{i+1}) \\
		= \frac{1}{4}(d_{i-2}^2 + d_{i-1}^2 - 1)(d_{i-1}^2 + d_{i}^2 - 1) - d_{i-1}^2 v_i \cdot v_{i+2}
	\end{multline}
	where we've used \eqref{eq:norm vi+1} and \eqref{eq:consecutive dots} to express three of the dot products in terms of the $d$'s.
	
	In the denominator of~\eqref{eq:costheta1}, $\|v_i \times v_{i+1}\|$ is twice the area of the triangle with side lengths $d_{i-2}, d_{i-1}$, and~$1$. We can compute this area using Heron's formula, yielding
	\[
		\|v_i \times v_{i+1}\| = \frac{1}{2} \sqrt{(d_{i-2}+d_{i-1}-1)(d_{i-2}-d_{i-1}+1)(-d_{i-2}+d_{i-1}+1)(d_{i-2}+d_{i-1}+1)}.
	\]
	Substituting this, the analogous formula for $\|v_{i+1}\times v_{i+2}\|$, and \eqref{eq:quadruple product} into \eqref{eq:costheta1} yields
	\begin{equation}\label{eq:costheta}
		 \cos\theta_{i-1}=\frac{(d_{i-2}^2 + d_{i-1}^2 - 1)(d_{i-1}^2 + d_{i}^2 - 1) - 4d_{i-1}^2 (v_i \cdot v_{i+2})}{A}.
	\end{equation}

	Solving~\eqref{eq:costheta} for $v_i \cdot v_{i+2}$ and plugging into~\eqref{eq:cosphi} gives an expression for $\cos \phi_i$:
	\[
		\cos\phi_i = \frac{d_{i-2}^2+d_i^2}{2} - 1 - \frac{(d_{i-2}^2 + d_{i-1}^2 - 1)(d_{i-1}^2 + d_{i}^2 - 1) - A \cos \theta_{i-1}}{4 d_{i-1}^2}.
	\]
	Taking the inverse cosine of both sides completes the proof.
	\end{proof}

\section{Questions and Future Directions} 
\label{sec:conclusion}

From our point of view, the most pressing question coming out of this work is whether it is possible to modify CPOP to cover confinement radii other than $R=1$. It is of substantial interest how confinement radius affects both geometric features like total curvature and topological features like knotting~\cite{diaoAverageCrossingNumber2018,diaoCurvatureRandomWalks2013,diaoKnotSpectrumConfined2014,diaoRelativeFrequenciesAlternating2018,diaoTotalCurvatureTotal2018,ernstKnottingSpectrumPolygonal2021}, and a generalized CPOP that could handle a range of $R$ would provide a new tool for exploring these problems.

For example, since $\mathcal{P}_n(R) \subseteq \mathcal{P}_n(R')$ when $R \leq R'$, similar combinatorial interpretations of $\mathcal{P}_n(R)$ for integer $R$ might enable reasonably efficient rejection samplers even for non-integer $R$. In an unpublished paper~\cite{chapmanOrthoschemesCoordinateSystem2019}, Chapman and Schreyer gave a combinatorial orthoscheme decomposition of $\mathcal{P}_n$, though it is somewhat unwieldy in practice. This might be a useful starting point for further exploration.

It would be satisfying to get a more elementary description of the quantity $E_\infty[\phi_i] \approx 2.14625$ than that in~\eqref{eq:Ephi} or to get any sort of analytic description of the quantity $b \approx 0.46742$ appearing in \Cref{conj:turning angle}. Indeed, without more understanding of these quantities, a proof of \Cref{conj:turning angle} seems out of reach. 

More generally, in the $R=1$ regime large-$n$ numerical explorations of other geometric quantities---like total torsion or writhe---or of topological features---like knotting---are now possible. In particular, since complicated knots are likely to be highly compact~\cite{diaoKnotSpectrumConfined2014,diaoRelativeFrequenciesAlternating2018,mooreTopologicallyDrivenSwelling2004,orlandiniAsymptoticsKnottedLattice1998}, polygons generated by CPOP are likely to give a form of ``enriched sampling'' for complicated knots (cf.~\cite{eddyNewStickNumber2022}). Moreover, it is generally believed that tightly confined polygons preferentially form prime knots~\cite{ernstKnottingSpectrumPolygonal2021}, as opposed to unconfined polygons, which are exponentially likely to be composite~\cite{diaoKnottingEquilateralPolygons1995}; as a model for random knots, where does $\Polhat(n;1)$ fit into Even-Zohar's taxonomy~\cite{even-zoharModelsRandomKnots2017}?

\subsection*{Acknowledgments}

Thanks to Kyle Chapman and Erik Schreyer, whose unpublished paper~\cite{chapmanOrthoschemesCoordinateSystem2019} provided inspiration, to the many other colleagues with whom we have discussed random polygons over the years, especially Jason Cantarella, and to the anonymous referees, whose detailed, thoughtful, and enthusiastic comments have made this a much better paper. We are very grateful for OEIS~\cite{oeis}, without which this paper probably would not exist. This work was supported by the National Science Foundation (DMS--2107700).

\clearpage

\appendix
\section{Average Turning Angle Data} 
\label{sec:data}

	Below we record the average turning angle for 1 million random elements of $\Polhat(n;1)$ for various $n$. The first block (which was actually the second run) covers $n$ from 10 to 500 in steps of 10; the second block covers $n$ from 500 to 20,000 in steps of 500.

	\begin{center}
	\begin{multicols*}{3}
	\TrickSupertabularIntoMulticols

	\tablefirsthead{
		\multicolumn{1}{c}{$n$} & \multicolumn{1}{l}{Avg. turning angle} \\
		\midrule
	}
	\tablehead{
		\multicolumn{1}{c}{$n$} & \multicolumn{1}{l}{Avg. turning angle} \\
		\midrule
	}
	\tablelasttail{\bottomrule}

	\begin{supertabular*}{.26\textwidth}{rl}
		10 & 2.100581721 \\
		20 & 2.123224279 \\
		30 & 2.130964208 \\
		40 & 2.134792121 \\
		50 & 2.137180449 \\
		60 & 2.138579043 \\
		70 & 2.139669385 \\
		80 & 2.140538045 \\
		90 & 2.141161817 \\
		100 & 2.141670407 \\
		110 & 2.142057442 \\
		120 & 2.142417816 \\
		130 & 2.142743199 \\
		140 & 2.143021670 \\
		150 & 2.143218218 \\
		160 & 2.143402482 \\
		170 & 2.143577180 \\
		180 & 2.143677481 \\
		190 & 2.143771587 \\
		200 & 2.143987619 \\
		210 & 2.144002625 \\
		220 & 2.144192762 \\
		230 & 2.144268263 \\
		240 & 2.144325655 \\
		250 & 2.144455228 \\
		260 & 2.144523104 \\
		270 & 2.144585018 \\
		280 & 2.144580270 \\
		290 & 2.144682053 \\
		300 & 2.144659530 \\
		310 & 2.144765877 \\
		320 & 2.144829440 \\
		330 & 2.144830929 \\
		340 & 2.144888697 \\
		350 & 2.144874572 \\
		360 & 2.145009015 \\
		370 & 2.145016327 \\
		380 & 2.145041241 \\
		390 & 2.145112802 \\
		400 & 2.145095060 \\
		410 & 2.145144604 \\
		420 & 2.145126882 \\
		430 & 2.145189363 \\
		440 & 2.145218436 \\
		450 & 2.145208467 \\
		460 & 2.145258700 \\
		470 & 2.145288265 \\
		480 & 2.145298440 \\
		490 & 2.145346165 \\
		500 & 2.145301198 \\
		\midrule
		500 & 2.145309867 \\
		1000 & 2.145790897 \\
		1500 & 2.145949853 \\
		2000 & 2.146015637 \\
		2500 & 2.146089473 \\
		3000 & 2.146089097 \\
		3500 & 2.146124566 \\
		4000 & 2.146137949 \\
		4500 & 2.146153888 \\
		5000 & 2.146152141 \\
		5500 & 2.146160267 \\
		6000 & 2.146185323 \\
		6500 & 2.146179808 \\
		7000 & 2.146184234 \\
		7500 & 2.146192641 \\
		8000 & 2.146191759 \\
		8500 & 2.146199162 \\
		9000 & 2.146202009 \\
		9500 & 2.146202016 \\
		10,000 & 2.146202867 \\
		10,500 & 2.146198483 \\
		11,000 & 2.146208793 \\
		11,500 & 2.146216447 \\
		12,000 & 2.146216357 \\
		12,500 & 2.146220053 \\
		13,000 & 2.146219403 \\
		13,500 & 2.146221864 \\
		14,000 & 2.146216008 \\
		14,500 & 2.146215223 \\
		15,000 & 2.146227213 \\
		15,500 & 2.146216835 \\
		16,000 & 2.146218053 \\
		16,500 & 2.146225334 \\
		17,000 & 2.146227263 \\
		17,500 & 2.146222148 \\
		18,000 & 2.146230611 \\
		18,500 & 2.146224827 \\
		19,000 & 2.146224484 \\
		19,500 & 2.146226968 \\
		20,000 & 2.146224623 \\
		\end{supertabular*}
	\end{multicols*}
	\end{center}

\printbibliography[heading=bibintoc]

\end{document}